
 \documentclass[a4paper,reqno,12pt]{amsart} 
   %




\usepackage{amsmath} 



\usepackage{amssymb}      


\usepackage{yhmath}
\usepackage{mathdots}
\usepackage{MnSymbol}





\usepackage{amsthm}      

\usepackage{tikz,amsmath}
\usetikzlibrary{arrows}

\usepackage{epsfig}      



\usepackage{graphicx}
\usepackage[normalem]{ulem}
\usepackage[all,line,
]{xy} 
\CompileMatrices 



      



   \newcommand{\Aff}{{\operatorname{Aff}}}
 

   \newcommand{\Hom}{\operatorname{Hom}}

 \newcommand{\Imag}{\operatorname{Im}}

\newcommand{\id}{\operatorname{id}} 
\newcommand{\Aut}{\operatorname{Aut}}
\newcommand{\KMS}{\operatorname{KMS}}

 \newcommand{\supp}{\operatorname{supp}}

\newcommand{\ev}{\operatorname{ev}}







   \theoremstyle{plain}
   \newtheorem{thm}{Theorem}[section]
   \newtheorem{prop}[thm]{Proposition}
   \newtheorem{lemma}[thm]{Lemma}  
   \newtheorem{cor}[thm]{Corollary}
   \theoremstyle{definition}
   
   \newtheorem{defn}[thm]{Definition}
   
   \theoremstyle{remark}
   \newtheorem{obs}[thm]{Observation}

\newtheorem{poem}[thm]{Additional properties}
\newtheorem{TOM}[thm]{Property}



 \usepackage[stable]{footmisc}

\usepackage{mdframed,xcolor}

\definecolor{mybgcolor}{gray}{0.8}
\definecolor{myframecolor}{rgb}{.647,.129,.149}

\mdfdefinestyle{mystyle}{
  usetwoside=false,
  skipabove=0.6em plus 0.8em minus 0.2em,
  skipbelow=0.6em plus 0.8em minus 0.2em,
  innerleftmargin=.25em,
  innerrightmargin=0.25em,
  innertopmargin=0.25em,
  innerbottommargin=0.25em,
  leftmargin=-.75em,
  rightmargin=-0em,
  topline=false,
  rightline=false,
  bottomline=false,
  leftline=false,
  backgroundcolor=mybgcolor,
  splittopskip=0.75em,
  splitbottomskip=0.25em,
  innerleftmargin=0.5em,
  leftline=true,
  linecolor=myframecolor,
  linewidth=0.25em,
}

\newmdenv[style=mystyle]{important}

\usepackage{lipsum}




   \numberwithin{equation}{section}








        \date{\today}

\title[$\KMS_\infty$ states]{Equilibria when the temperature goes to zero}
\author{Klaus Thomsen}



\date{\today}


\bigskip


\bigskip

\address{Department of Mathematics, Aarhus University, Ny Munkegade, 8000 Aarhus C, Denmark}

\email{matkt@math.au.dk}

\begin{document}

\begin{abstract} We present methods to construct flows with varying set of $\KMS_\infty$ states on a given simple unital AF-algebra. It follows, for example, that for any pair $D_+$ and $D_-$ of non-empty compact metric spaces there is a flow $\sigma = (\sigma_t)_{t \in \mathbb R}$ on the CAR algebra whose set of $\KMS_\infty$ states is homeomorphic to $D_+$ while the set of $\KMS_{\infty}$ states for the inverted flow $(\sigma_{-t})_{t \in \mathbb R}$ is homeomorphic to $D_-$. Remarkably the flows that realize all such pairs $D_\pm$ can be chosen to have isomorphic $\KMS$ bundles. 

\end{abstract}

\maketitle

\section{Introduction}

In certain quantum statistical models where the observables are represented by elements of a unital $C^*$-algebra $A$ and the time-evolution is given by a flow $\sigma = (\sigma_t)_{t \in \mathbb R}$ of automorphisms of $A$ there is a notion of ground states, \cite{BR}; states that may be interpreted as equilibrium states at zero temperature. Inside the set of ground states is a subset consisting of the states that are limits in the weak* topology of states that are equilibrium states for the system when the temperature goes to zero. These ground states were introduced by Connes and Marcolli in \cite{CM1} and they constitute often a considerable smaller and better behaved set of ground states. They are called KMS infinity states, in the following abbreviated to $\KMS_\infty$ states. We refer to \cite{CM1}, \cite{CM2}, \cite{CMR}, \cite{LLN1}, \cite{aHLRS}, \cite{CL}, \cite{CT}, \cite{LLN2}, \cite{BLT}, \cite{Th2}, \cite{NS} and \cite{ALMS} for papers where $\KMS_\infty$ states are considered. The equilibrium states of the models at a finite inverse temperature $\beta$ are called $\beta$-KMS states and they form a Choquet simplex $S^\sigma_\beta$ for each $\beta \in  \mathbb R$. When we denote the set of $\KMS_\infty$ states of $\sigma$ by $\KMS_\infty(\sigma)$, the defining relation between $\KMS_\infty(\sigma)$ and the sets of $\KMS$ states is that
$$
\KMS_\infty(\sigma) := \bigcap_{R > 0} \overline{\bigcup_{\beta \geq R} S^\sigma_\beta} 
$$
where $\overline{\bigcup_{\beta \geq R} S^\sigma_\beta}$ is the closure of $\bigcup_{\beta \geq R} S^\sigma_\beta$ in the weak* topology. With this definition we follow \cite{LLN1} and the subsequent papers, but it must be emphasized that this is not the original definition of Connes and Marcolli. In the first three papers \cite{CM1}, \cite{CM2} and \cite{CMR} mentioned above, a state $\omega$ is a $\KMS_\infty$ state when there are $\beta$-KMS states $\omega_\beta$ for all sufficiently large $\beta$ such that $\lim_{\beta \to \infty} \omega_\beta = \omega$, while we only require that there is a sequence $\{\omega_n\}$ of states such that $\omega_n$ is a $\beta_n$-KMS state, $\lim_{n \to \infty} \beta_n = \infty$ and $\lim_{n \to \infty} \omega_n = \omega$. The two conditions are very different. For example there may be many states satisfying the second condition, but none satisfying the first. In a final remark of the paper we say a little more about the difference.

  Once the set of $\KMS_\infty$ states is considered it is also natural to consider the set of $\KMS_{-\infty}$ states which are the states that are limits in the weak* topology of KMS states for the system when the inverse temperature goes to minus  infinity; in symbols
$$
\KMS_{-\infty}(\sigma) := \bigcap_{R < 0} \overline{\bigcup_{\beta \leq R} S^\sigma_\beta} .
$$
This is the set of $\KMS_\infty$ states for the inverted system where the time-parameter $t$ is replaced by $-t$. We are here interested both in the set of $\KMS_\infty$ states and the set of $\KMS_{-\infty}$ states for flows on unital $C^*$-algebras. Since the state space of a unital $C^*$-algebra is compact in the weak* topology, $\KMS_\infty(\sigma)$ is a compact space, and metric when the $C^*$-algebra is separable as it always will be in this paper. Hence $\KMS_\infty(\sigma)$ and $\KMS_{-\infty}(\sigma)$ are both compact metric spaces. We would like to know which compact metric spaces can be realized as $\KMS_\infty(\sigma)$ and $\KMS_{-\infty}(\sigma)$ for a flow on a $C^*$-algebra, and how these sets are related to the collection of KMS states and the $C^*$-algebra of observables. With the results we obtain here this goal is achieved when the $C^*$-algebra is a simple infinite dimensional unital AF-algebra.

There are certain phenomena that can occur for general flows, but never for non-trivial flows on a simple unital $C^*$-algebra which we consider to be the most interesting. In order to focus on the interesting cases we introduce therefore here the notion of \emph{a simple flow} by which we mean a non-trivial flow $\sigma$ on a unital separable $C^*$-algebra $B$ such that no non-trivial closed two-sided ideal of $B$ is left globally invariant by $\sigma$. For simple flows we show in Section \ref{bundle} that \emph{the KMS bundle}, which is a notion introduced and exploited in \cite{ET}, can be identified with the union 
$$
S^\sigma := \bigcup_{\beta \in \mathbb R} S^\sigma_\beta
$$
equipped with the weak* topology inherited from the state space, together with the map $\pi^\sigma : S^\sigma \to \mathbb R$ defined such that $\pi^\sigma(S^\sigma_\beta) = \{\beta\}$.\footnote{In general, when the flow is not a simple flow, some intersections $S^\sigma_\beta \cap S^{\sigma}_{\beta'}$ with $\beta \neq \beta'$ may be non-empty in which case the map $\pi^\sigma$ is not defined.} Furthermore, for simple flows the map $\pi^\sigma$ extends to a continuous map
$$
\pi^\sigma : \ \KMS_{-\infty}(\sigma) \cup  \left(\bigcup_{\beta \in \mathbb R} S^\sigma_\beta\right) \cup \KMS_{\infty}(\sigma) \ \ \to \ \ [-\infty,\infty]
$$
such that ${\pi^{\sigma}}^{-1}(-\infty) = \KMS_{-\infty}(\sigma) $ and ${\pi^{\sigma}}^{-1}(\infty) = \KMS_{\infty}(\sigma)$. It follows in particular that the topological boundary $\overline{S^\sigma}\backslash S^\sigma$ of $S^\sigma$ inside the state space is the disjoint union of $\KMS_\infty(\sigma)$ and $\KMS_{-\infty}(\sigma)$. In symbols,
  $$
  \overline{S^\sigma}\backslash S^\sigma = \KMS_{-\infty}(\sigma) \sqcup \KMS_\infty(\sigma) .
  $$

  The ingredients in the problem we consider can be illustrated by the picture
\begin{equation*}
\begin{xymatrix}{
& & &&  B &&&& \\
&&&&&&&&\\
& & && \sigma \ar@{-}[uu] &&&& \\
& &(S,\pi) \ar@{-}[urr]   &&  && D_\pm \ar@{-}[ull]& &\\
}
\end{xymatrix}
\end{equation*}
where $B$ is a unital separable $C^*$-algebra, $(S,\pi)$ is a proper simplex bundle, and $D_+$ and $D_-$ are compact metric spaces. The $\sigma$ in the middle is a simple flow which binds the ingredients together in the sense that $\sigma$ acts on $B$ and the $\KMS$ bundle of $\sigma$ is isomorphic to $(S,\pi)$ while $\KMS_\infty(\sigma)$ is homeomorphic to $D_+$ and $\KMS_{-\infty}(\sigma)$ is homeomorphic to $D_-$. The problem we consider is to decide for a given quadruple $B,(S,\pi), D_+$ and $D_-$, if there is a flow $\sigma$ fitting into such a picture. Concerning the left-hand side, involving only $B,(S,\pi)$ and $\sigma$, quite a lot is known by now. When we ignore the spaces $D_\pm$ like this, there is only one rather obvious obstruction to the existence of $\sigma$: Since the $0$-KMS states for $\sigma$ are the $\sigma$-invariant trace states there has to be a relation between the tracial state space of $B$ and the simplex $\pi^{-1}(0)$. In many cases this means that there can only be a flow on $B$ whose KMS bundle is isomorphic to $(S,\pi)$ when the tracial state space of $B$ is affinely homeomorphic to $\pi^{-1}(0)$. It may be that this is the only obstruction when $B$ is a unital simple separable and infinite dimensional $C^*$-algebra. See \cite{ET}, \cite{EST} and \cite{ES} for results in that direction. 

Now assume that the left-hand side of the picture can be realized. That is, assume that $B$ is a simple unital separable $C^*$-algebra with a flow whose $\KMS$ bundle is isomorphic to $(S,\pi)$. Consider then two compact metric spaces $D_+$ and $D_-$. Is there a flow $\sigma$ completing the picture above? As before there is one obvious obstruction: If the set $\pi(S)$ is bounded above, which means that there is a lower bound on the positive temperatures realized by any flow whose $\KMS$ bundle is isomorphic to $(S,\pi)$, then the space $D_+$ must be empty. Likewise, if $\pi(S)$ is bounded below the set $D_-$ must be empty. In general there is one more, perhaps less obvious restriction as we point out in Lemma \ref{05-08-23}: $D_+$ must be a continuous image of the Stone-\v{C}ech boundary of $\pi^{-1}([0,\infty))$ and $D_-$ must be a continuous image of the Stone-\v{C}ech boundary of $\pi^{-1}((-\infty,0])$. These conditions imply that when $\pi(S)$ contains an infinite interval $[r,\infty)$ for some $r > 0$, the space $D_+$ has to be connected. And similarly for $D_-$. The main result we obtain is that there are no more restrictions when $B$ is a unital simple infinite dimensional AF-algebra. That is, if $B$ is such an algebra and the left-hand of the diagram above is given with $\pi(S)$ unbounded in both directions, we can complete the diagram with any pair of non-empty compact metric spaces $D_\pm$ provided only that $D_+$ is connected if $\pi(S)$ contains an interval of the form $[r,\infty)$ for some $r > 0$ and that $D_-$ is connected if $-\pi(S)$ contains such an interval. But the flow $\sigma$ will typically have to change of course.




\subsection{On the methods} The first examples of flows with a rich structure of KMS states were given in the work of Bratteli, Elliott and Herman in \cite{BEH}. To describe the basic idea in their work, recall that a crossed product $A \rtimes_\alpha \mathbb Z$ of a $C^*$-algebra $A$ by an action of $\mathbb Z$ given by an automorphism $\alpha$ is generated by products of the form $au^k$, where $ a  \in A, \ k \in \mathbb Z$, and $u$ is a unitary $u$ with the property that $uau^* = \alpha(a)$ for all $a \in A$. There is a flow $\hat{\alpha} = (\hat{\alpha}_t)_{t \in \mathbb R}$ on $A \rtimes_\alpha \mathbb Z$, which in the following is called \emph{the dual flow} of $\alpha$, defined such that 
$$
\hat{\alpha}_t(au^k) = \exp(ikt)au^k
$$
for all $a \in A, \ k \in \mathbb Z$ and $t \in \mathbb R$. Let $e$ be a projection in $A$. Then $\hat{\alpha}$ fixes $e$ and the corner $e(A\rtimes_\alpha \mathbb Z)e$ is $\hat{\alpha}$-invariant so that the dual flow defines a flow $\hat{\alpha}^e$ on $e(A\rtimes_\alpha \mathbb Z)e$ by restriction. Note that $e(A\rtimes_\alpha \mathbb Z)e$ is unital even when $A$ isn't, and simple when $A \rtimes_\alpha \mathbb Z$ is. The idea in \cite{BEH} was to construct $A,\alpha$ and $e$ such that $\hat{\alpha}^e$ has a rich structure of KMS states; in \cite{BEH} the focus was on the set of $\beta\in \mathbb R $ for which there is a $\beta$-KMS state and the main result was the construction, for any given closed subset $F$ of $\mathbb R$, of a triple $A,\alpha, e$ such that there is a $\beta$-KMS state for the resulting flow $\hat{\alpha}^e$ if and only if $\beta \in F$. What made this approach possible was the classification of AF-algebras that had just been completed around 1979.

The basic idea from \cite{BEH} was continued in \cite{BEK1}, \cite{BEK2}, and taken up again in \cite{Th3}, \cite{ET} and \cite{EST} where it was combined with recent classification results for simple $C^*$-algebras that go beyond the AF-case. The construction we present here uses methods from \cite{ET} where they were used to find a flow on a given AF-algebra $B$ with a $\KMS$ bundle isomorphic to a given proper simplex bundle $(S,\pi)$. These methods are here combined with ideas from \cite{Th3} to control the asymptotic behaviour of the sets of $\KMS$ states when the inverse temperature goes to plus or minus infinity. As in \cite{ET} the crux of the matter is to construct an AF-algebra $A$, an automorphism $\alpha$ of $A$ and a projection $e \in A$ such that 
\begin{itemize}
\item[(a)] the restriction $\hat{\alpha}^e$ of the dual flow $\hat{\alpha}$ to $e(A \rtimes_\alpha \mathbb Z)e$ has a $\KMS$ bundle isomorphic to $(S,\pi)$, and
\item[(b)] the corner $e(A \rtimes_\alpha \mathbb Z)e$ is isomorphic $B$.
\end{itemize}
This time we also want to arrange that
\begin{itemize}
\item[(c)] the set of $\KMS_{\infty}$ states of $\hat{\alpha}^e$ is homeomorphic to $D_+$ and the set of $\KMS_{-\infty}$ states of $\hat{\alpha}$ is homeomorphic to $D_-$
\end{itemize}
The desired flow $\sigma$ on $B$ is then obtained by transferring $\hat{\alpha}^e$ to $B$ using the isomorphism from (b).

 \section{On $\KMS$ bundles}\label{bundle}


Let $\sigma$ be a flow on a unital $C^*$-algebra $A$ and let $\mathcal A_\sigma$ denote the dense $*$-algebra in $A$ consisting of the elements that are analytic for $\sigma$. Let $\beta \in \mathbb R$. A state $\omega$ on $A$ is a \emph{$\beta$-KMS state} for $\sigma$ when $\omega$ is $\sigma$-invariant and
$$
\omega(ab) = \omega(b \sigma_{i\beta}(a))
$$
for all $b \in A$ and all elements $a\in \mathcal A_\sigma$, \cite{BR}. The set of $\beta$-KMS states for $\sigma$ is a possibly empty Choquet simplex which we denote by $S^\sigma_\beta$ in the following. The general structure of the collection 
$$
\bigcup_{\beta \in \mathbb R} S^\sigma_\beta
$$ 
of all KMS states for a flow is best described using the notion of \emph{proper simplex bundles} which we describe next, cf. \cite{ET}. Let $S$ be a second countable locally compact Hausdorff space and $\pi : S \to \mathbb R$ a continuous map. If the inverse image $\pi^{-1}(t)$, equipped with the relative topology inherited from $S$, is homeomorphic to a compact Choquet simplex for all $t \in \mathbb R$ we say that $(S,\pi)$ is a \emph{simplex bundle}. We emphasize that $\pi$ need not be surjective, and we consider therefore also the empty set as a Choquet simplex. When $(S,\pi)$ is a simplex bundle we denote by $\mathcal A(S,\pi)$ the set of continuous functions $f : S \to \mathbb R$ with the property that the restriction $f|_{\pi^{-1}(t)}$ of $f$ to $\pi^{-1}(t)$ is affine for all $t \in \mathbb R$.

\begin{defn}\label{25-08-21} A simplex bundle $(S,\pi)$ is a \emph{proper simplex bundle} when
\begin{itemize}
\item[(1)] $\pi$ is proper, i.e. $\pi^{-1}(K)$ is compact in $S$ when $K \subseteq \mathbb R$ is compact, and
\item[(2)] $\mathcal A(S,\pi)$ separates points on $S$; i.e. for all $x\neq y$ in $S$ there is an $f \in\mathcal A(S,\pi)$ such that $f(x) \neq f(y)$.
\end{itemize}
\end{defn}

Two proper simplex bundles $(S,\pi)$ and $(S',\pi')$ are \emph{isomorphic} when there is a homeomorphism $\phi : S \to S'$ such that $\pi' \circ \phi = \pi$ and $\phi: \pi^{-1}(\beta) \to {\pi'}^{-1}(\beta)$ is affine for all $\beta \in \mathbb R$.

Let $E(A)$ denote the state space of $A$ which is a compact topological space in the weak* topology. Set 
$$
S^\sigma := \left\{(\omega,\beta) \in E(A)\times \mathbb R: \ \omega \in S^\sigma_\beta \right\} .
$$
We consider $S^\sigma$ as a topological space in the topology inherited from the product topology of $E(A) \times \mathbb R$. The projection $\pi^\sigma : S^\sigma \to \mathbb R$ to the second coordinate is then continuous. We call $(S^\sigma,\pi^\sigma)$ the \emph{KMS bundle of $\sigma$}. The following is a consequence of results in \cite{ET} and \cite{EST} and shows the significance of proper simplex bundles.

\begin{prop}\label{12-11-22} Let $\sigma$ be a flow on a unital separable $C^*$-algebra. Then $(S^\sigma,\pi^\sigma)$ is a proper simplex bundle, and every proper simplex bundle is isomorphic to the $\KMS$ bundle of a flow on a simple unital separable $C^*$-algebra.
\end{prop}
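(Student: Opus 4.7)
The plan is to check the three clauses of Definition 2.1 for $(S^\sigma, \pi^\sigma)$ and then invoke the existing constructions in \cite{ET} and \cite{EST} for the realization half.

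For the first clause, that $(S^\sigma, \pi^\sigma)$ is a simplex bundle, I would observe that $S^\sigma$ sits inside the compact metrizable space $E(A) \times \mathbb R$ (metrizable because $A$ is separable). Continuity of $\pi^\sigma$ is immediate, and each fiber is $S^\sigma_\beta$, a compact Choquet simplex by general KMS theory. To obtain local compactness and second countability as well as the properness of $\pi^\sigma$ in clause (1), I would show that $S^\sigma$ is closed in $E(A) \times \mathbb R$: suppose $(\omega_n, \beta_n) \to (\omega, \beta)$ with $\omega_n \in S^\sigma_{\beta_n}$. For any $a \in \mathcal A_\sigma$ and any $b \in A$ one passes to the limit in
\eQ{\omega_n(ab) = \omega_n(b \sigma_{i\beta_n}(a)),}
using that $\sigma_{i\beta_n}(a) \to \sigma_{i\beta}(a)$ in norm (since $z \mapsto \sigma_z(a)$ is entire for analytic $a$) and that $\omega_n \to \omega$ in the weak* topology. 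Once closedness is in hand, $(\pi^\sigma)^{-1}(K)$ is closed in the compact set $E(A) \times K$ for any compact $K \subset \mathbb R$, giving properness.

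For the separation condition in clause (2), I would use two families of functions in $\mathcal A(S^\sigma, \pi^\sigma)$. First, for every self-adjoint $a \in A$ the evaluation $(\omega,\beta) \mapsto \omega(a)$ is continuous on $S^\sigma$, and its restriction to a fiber $S^\sigma_\beta$ is affine in $\omega$; these already separate any two points lying in the same fiber by definition of the weak* topology. Second, the coordinate function $\pi^\sigma$ is constant on each fiber, hence trivially affine there, and it separates points in different fibers. These two families together separate points of $S^\sigma$.

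The second statement, that every proper simplex bundle is realized, is the substantive part and I would not reprove it here. The construction of \cite{ET} starts from an arbitrary proper simplex bundle $(S,\pi)$ and produces an AF-algebra $A$, an automorphism $\alpha$, and a projection $e \in A$ such that the restriction of the dual flow $\hat\alpha$ to the corner $e(A \rtimes_\alpha \mathbb Z)e$ has $\KMS$ bundle isomorphic to $(S,\pi)$; with the appropriate choice of ingredients this corner is a simple unital separable $C^*$-algebra. The refinements in \cite{EST} complement and strengthen this, in particular removing structural restrictions inherited from the AF case. The main obstacle in that line of work—engineering an automorphism whose tracial behaviour on each spectral subspace realizes the prescribed affine geometry of the target bundle—is the core technical content of those papers, which I would quote rather than reproduce.
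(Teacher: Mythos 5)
Your proposal is correct and takes essentially the same route as the paper, which gives no independent argument for this proposition but derives it directly from the constructions in \cite{ET} and \cite{EST}; your verification of the first half (closedness of $S^\sigma$ in $E(A)\times\mathbb R$ via norm-continuity of $\beta\mapsto\sigma_{i\beta}(a)$ for analytic $a$, hence properness, plus separation by the maps $(\omega,\beta)\mapsto\omega(a)$ for $a=a^*$ together with $\pi^\sigma$) is the standard argument, the limit step being exactly Proposition 5.3.23 of \cite{BR}, which the paper itself invokes elsewhere. Two microscopic points worth noting: the $\sigma$-invariance required in the paper's definition of a $\beta$-KMS state also passes trivially to weak* limits, and in the realization half the case $\pi^{-1}(0)=\emptyset$ is precisely the one needing \cite{EST} rather than the AF construction of \cite{ET}, since a flow obtained from an AF crossed product always admits invariant traces.
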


 We say that $A$ is \emph{$\sigma$-simple} when the only closed two-sided $\sigma$-invariant ideals in $A$ are $\{0\}$ and $A$, and that $\sigma$ is \emph{non-trivial} when $\sigma$ is not the trivial flow; the one for which $\sigma_t = \id_A$ for all $t \in \mathbb R$. A $\sigma$-simple non-trivial flow on a unital separable $C^*$-algebra will be called a \emph{simple flow}. In particular, every non-trivial flow on a simple unital separable $C^*$-algebra is a simple flow.

\begin{lemma}\label{05-01-23}  Assume $\sigma$ is a simple flow. Then $S^\sigma_\beta \cap S^\sigma_{\beta'} =\emptyset$ when $\beta \neq \beta'$.
\end{lemma}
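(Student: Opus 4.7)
The plan is to derive a contradiction from the assumption that a state $\omega$ lies in $S^\sigma_\beta\cap S^\sigma_{\beta'}$ with $\beta\neq\beta'$, by showing that such an $\omega$ would force $\sigma$ to act trivially on $A$. Since $\beta\neq\beta'$ at least one of them is non-zero, so I may assume $\beta\neq 0$. Let $(\pi_\omega,H_\omega,\xi_\omega)$ be the GNS triple of $\omega$. Since $\omega$ is $\sigma$-invariant, a unique strongly continuous unitary group $(U_t)_{t\in\mathbb R}$ on $H_\omega$ fixes $\xi_\omega$ and satisfies $U_t\pi_\omega(a)U_{-t}=\pi_\omega(\sigma_t(a))$. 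Conjugation by $U_t$ then yields a weak$^*$-continuous flow $\bar\sigma$ on the von Neumann algebra $M:=\pi_\omega(A)''$, for which the normal vector state $\bar\omega(x):=\langle x\xi_\omega,\xi_\omega\rangle$ is simultaneously a $\beta$-KMS and a $\beta'$-KMS state.

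The heart of the argument is a Tomita--Takesaki step. Because $\bar\omega$ is $\beta$-KMS for $\bar\sigma$ with $\beta\neq 0$, the vector $\xi_\omega$ is automatically separating for $M$ (a standard consequence of the KMS condition, cf.~\cite{BR}), so $\bar\omega$ is a faithful normal state. Its modular automorphism group is then uniquely determined, and the $\beta$-KMS and $\beta'$-KMS conditions force both reparametrised flows $t\mapsto\bar\sigma_{\beta t}$ and $t\mapsto\bar\sigma_{\beta' t}$ to coincide with this modular group (up to a fixed sign convention). Hence $\bar\sigma_{\beta t}=\bar\sigma_{\beta' t}$ for every $t\in\mathbb R$, and since $\beta-\beta'\neq 0$ this gives $\bar\sigma_s=\id_M$ for every $s\in\mathbb R$; equivalently $\pi_\omega\circ\sigma_s=\pi_\omega$ for all $s$.

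It now suffices to invoke that $\sigma$ is a simple flow. The kernel $I:=\ker\pi_\omega$ is a closed two-sided ideal in $A$, the identity $\pi_\omega\circ\sigma_s=\pi_\omega$ shows that $I$ is $\sigma$-invariant, and $I$ is proper because $\pi_\omega(1)=1$. By $\sigma$-simplicity $I=\{0\}$, so $\pi_\omega$ is injective; combined with $\pi_\omega\circ\sigma_s=\pi_\omega$ this yields $\sigma_s=\id_A$ for every $s\in\mathbb R$, contradicting the non-triviality of $\sigma$. The main obstacle in this plan is the Tomita--Takesaki input: one must know that a KMS state at non-zero inverse temperature makes the GNS vector separating and that the associated modular automorphism group uniquely pins down any flow which makes the state KMS (up to rescaling the time parameter). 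Both facts are classical, after which the remainder of the argument is a short manipulation with the $\sigma$-invariant ideal $\ker\pi_\omega$.
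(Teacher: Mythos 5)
Your proof is correct and follows essentially the same route as the paper's: pass to the GNS representation, extend to a flow on $\pi_\omega(A)''$ whose normal state extension is faithful, invoke the uniqueness of the modular automorphism group to force $\widehat{\sigma}_{\beta t}=\widehat{\sigma}_{\beta' t}$, and then use $\sigma$-simplicity via the invariant ideal $\ker\pi_\omega$ to conclude $\sigma$ is trivial, a contradiction. The only cosmetic difference is your harmless reduction to $\beta\neq 0$ for the separating-vector step, which is not actually needed since the faithfulness of the normal extension (cf.\ Corollary 5.3.9 of \cite{BR}) holds for all $\beta$, including the tracial case $\beta=0$.
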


\begin{proof} Assume $\omega \in S^\sigma_\beta \cap S^\sigma_{\beta'}$ and let $(H_\omega,\pi_\omega, \Omega_\omega)$ be the GNS representation of $\omega$, and $\widehat{\omega}$ the normal extension of $\omega$ to $\pi_\omega(A)''$. Then $\widehat{\omega} \circ \pi_\omega = \omega$. By Corollary 5.3.4 in \cite{BR} there is a $\sigma$-weakly continuous flow $\widehat{\sigma}$ on $\pi_\omega(A)''$ for which $\widehat{\omega}$ is both a $\beta$- and a $\beta'$-KMS state. Note that $\widehat{\sigma}_t \circ \pi_\omega = \pi_{\omega} \circ \sigma_t$ for all $t$ and that $\widehat{\omega}$ is faithful by Corollary 5.3.9 of \cite{BR}. It follows from Proposition 5.3.7 in \cite{BR} that $\{\widehat{\sigma}_{- \beta t}\}$ and $\{\widehat{\sigma}_{-\beta' t}\}$ are both modular flows for $\widehat{\omega}$ and hence by the uniqueness of the modular flow, cf. e.g. Theorem 9.2.16 in \cite{KR}, that $\widehat{\sigma}_{- \beta t} = \widehat{\sigma}_{-\beta' t}$ for all $t$. Hence $\pi_\omega\left( \sigma_{-\beta t}(a) - \sigma_{-\beta' t}(a)\right) = 0$ for all $t,a$. Note that $\ker \pi_\omega$ is a proper $\sigma$-invariant ideal and hence $\{0\}$ by assumption. It follows therefore first that $\sigma_{\beta t} = \sigma_{\beta' t}$ for all $t \in \mathbb R$, and then since $\beta \neq \beta'$ that $\sigma$ is trivial, contrary to assumption. It follows that $S^\sigma_\beta \cap S^\sigma_{\beta'} =\emptyset$.
\end{proof}

When $\sigma$ is simple it follows from Lemma \ref{05-01-23} that the map 
\begin{equation}\label{31-12-22a}
S^\sigma \ni (\omega,\beta) \mapsto \omega \in \bigcup_{\beta \in \mathbb R} S^\sigma_\beta
\end{equation}
is injective and hence a bijection. Thus $\bigcup_{\beta \in \mathbb R} S^\sigma_\beta$ has a topology in which it is a second countable locally compact Hausdorff space since $S^\sigma$ does. To identify this topology, let 
\begin{equation}\label{01-01-23a}
\Phi: \
\bigcup_{\beta \in \mathbb R} S^\sigma_\beta \to \mathbb R
\end{equation}
be the function defined such that $\Phi(\omega) = \beta$ when $\omega \in S^\sigma_\beta$. This is well-defined by Lemma \ref{05-01-23}.

 \begin{lemma}\label{04-01-23bx} Assume that $A$ is unital and separable, and that $\omega$ is a $\KMS_\infty$ state for $\sigma$. Then 
\begin{equation}\label{04-01-23d}
\sup_{\Imag z \geq 0} \left|\omega(a\sigma_z(b))\right| \leq \|a\|\|b\|
\end{equation}
for all $a \in A, \ b \in \mathcal A_\sigma$.
\end{lemma}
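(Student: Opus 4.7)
The plan is to approximate $\omega$ by KMS states at large finite inverse temperatures and pass to the limit using the classical strip estimate for KMS states. Since $A$ is separable the weak* topology on $E(A)$ is metrizable, so the defining condition $\omega \in \bigcap_{R>0} \overline{\bigcup_{\beta \geq R} S^\sigma_\beta}$ yields, by a standard diagonal extraction, a sequence of $\beta_n$-KMS states $\omega_n \in S^\sigma_{\beta_n}$ with $\beta_n \to \infty$ and $\omega_n \to \omega$ weak*.

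For fixed $a \in A$ and $b \in \mathcal A_\sigma$, define $F_n(z) := \omega_n(a\sigma_z(b))$. Because $b$ is entire analytic for $\sigma$, the map $z \mapsto \sigma_z(b) \in A$ is entire and the composition $F_n$ is entire as well. On the real axis one has $|F_n(t)| \leq \|a\|\,\|\sigma_t(b)\| = \|a\|\,\|b\|$ since $\sigma_t$ is isometric. On the line $\Imag z = \beta_n$ the $\beta_n$-KMS condition (applied to the analytic element $\sigma_t(b) \in \mathcal A_\sigma$) gives
$$
F_n(t + i\beta_n) = \omega_n\bigl(a \, \sigma_{i\beta_n}(\sigma_t(b))\bigr) = \omega_n\bigl(\sigma_t(b) \, a\bigr),
$$
so $|F_n(t+i\beta_n)| \leq \|a\|\,\|b\|$. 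Moreover $F_n$ is bounded on the closed strip $0 \leq \Imag z \leq \beta_n$: because $\sigma_t$ is isometric, $\|\sigma_{t+is}(b)\| = \|\sigma_{is}(b)\|$ depends only on $s$, which is a continuous function of $s$ on the compact interval $[0,\beta_n]$ and hence bounded there. Hadamard's three lines theorem then produces the strip estimate
$$
|F_n(z)| \leq \|a\|\,\|b\| \qquad \text{whenever } 0 \leq \Imag z \leq \beta_n.
$$

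Finally, given any $z$ with $\Imag z \geq 0$, choose $n$ large enough that $\beta_n \geq \Imag z$; then $|\omega_n(a\sigma_z(b))| \leq \|a\|\,\|b\|$. Since $a\sigma_z(b)$ is a fixed element of $A$, weak* convergence $\omega_n \to \omega$ delivers $\omega_n(a\sigma_z(b)) \to \omega(a\sigma_z(b))$, and the bound passes to the limit, giving \eqref{04-01-23d}. The only step requiring a moment's care is verifying the boundedness hypothesis of Hadamard's theorem on the full infinite strip, which is handled by the isometric-shift observation above; everything else is a routine combination of the KMS boundary condition and weak* continuity.
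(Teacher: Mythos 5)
Your proof is correct and takes essentially the same route as the paper: extract a sequence of $\beta_n$-KMS states $\omega_n \to \omega$ weak* with $\beta_n \to \infty$, establish the bound $|\omega_n(a\sigma_z(b))| \leq \|a\|\|b\|$ on the strip $0 \leq \Imag z \leq \beta_n$, and pass to the limit at each fixed $z$. The only difference is that the paper simply cites Proposition 5.3.7 of \cite{BR} for the strip estimate, whereas you reprove it inline via the boundary bounds and the three-lines theorem --- which is precisely the argument behind that proposition.
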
 
\begin{proof} Let $\{\omega_n\}$ be a sequence of states such that $\lim_{n \to \infty} \omega_n = \omega$ in the weak* topology, $\omega_n$ is a $\beta_n$-KMS state and $\lim_{n \to \infty} \beta_n = \infty$. Then $\lim_{n \to \infty} \omega_n(a\sigma_z(b)) = \omega(a\sigma_z(b))$ for all $z \in \mathbb C$ and to establish \eqref{04-01-23d} it suffices to show that
\begin{equation*}\label{01-01-23d}
\left| \omega_n(a\sigma_z(b))\right| \leq \|a\|\|b\| 
\end{equation*}
when $0 \leq \Imag z \leq \beta_n$, which is part of Proposition 5.3.7 of \cite{BR}.

\end{proof}

\begin{lemma}\label{05-01-23a} Let $\sigma$ be a simple flow. Then $S^\sigma_\beta \cap \KMS_\infty(\sigma)= \emptyset$ for all $\beta \in \mathbb R$.
\end{lemma}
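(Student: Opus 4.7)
The plan is to assume $\omega \in S^\sigma_\beta \cap \KMS_\infty(\sigma)$ and to argue, exactly as at the end of the proof of Lemma \ref{05-01-23}, that $\sigma$ must then act trivially on $A$, contradicting the non-triviality built into the notion of a simple flow. The bridge between the KMS and $\KMS_\infty$ properties will be Liouville's theorem: for suitably chosen analytic $b$, the entire function $F(z)=\omega(a\sigma_z(b))$ will turn out to be bounded on all of $\mathbb C$.

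For the key estimate, Lemma \ref{04-01-23bx} already gives $|F(z)|\leq\|a\|\|b\|$ on the upper half plane. Applying that same lemma with $a^*$ and $b^*$ in place of $a$ and $b$ and then passing to the complex conjugate yields the symmetric bound $|G(z)|\leq\|a\|\|b\|$ on the lower half plane, where $G(z)=\omega(\sigma_z(b)a)$. The $\beta$-KMS identity in the form $G(z)=F(z+i\beta)$ transports this into $|F(w)|\leq\|a\|\|b\|$ on the half plane $\Imag w\leq\beta$. For $\beta\geq 0$ these two half planes already cover $\mathbb C$ and $F$ is bounded by $\|a\|\|b\|$ everywhere. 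For $\beta<0$ a residual strip $\beta\leq\Imag z\leq 0$ remains, which I would close by choosing $b$ in the standard dense $*$-subalgebra of Gaussian-smoothed analytic elements, for which $\|\sigma_z(b)\|\leq\|b\|e^{c(\Imag z)^2}$; then $F$ is a priori bounded on the strip and the maximum modulus principle upgrades the boundary bound to the interior. In all cases Liouville now forces $F$ to be constant, so $\omega(a\sigma_t(b))=\omega(ab)$ for every $t\in\mathbb R$, and by norm-density of the Gaussian-smoothed subalgebra together with the isometry of $\sigma_t$ this identity extends to all $b\in A$.

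The algebraic consequence is then automatic. Writing $c=\sigma_t(b)-b$ and substituting $a=c^*$ into $\omega(ac)=0$ gives $\omega(c^*c)=0$, so $\pi_\omega(c)\Omega_\omega=0$ in the GNS representation. For $\beta\neq 0$ the separating property of $\Omega_\omega$ for $\pi_\omega(A)''$ (Corollary 5.3.9 of \cite{BR}) immediately gives $\pi_\omega(c)=0$; for $\beta=0$ the trace inequality $\omega(xc^*cx^*)\leq\|x\|^2\omega(c^*c)=0$ combined with cyclicity of $\Omega_\omega$ for $\pi_\omega(A)$ gives the same conclusion. Thus $\sigma_t(b)-b\in\ker\pi_\omega$ for every $t$ and every $b$. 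Since $\ker\pi_\omega$ is a proper closed two-sided $\sigma$-invariant ideal in $A$, the $\sigma$-simplicity assumption forces $\ker\pi_\omega=\{0\}$, and therefore $\sigma_t=\id_A$ for all $t$, contradicting the non-triviality of $\sigma$.

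The main obstacle I expect is the $\beta<0$ portion of the bounding step: Lemma \ref{04-01-23bx} only controls $F$ on a half plane, while the $\beta$-KMS reflection lands on the wrong side of the real line and leaves an open strip of width $|\beta|$ uncontrolled; selecting the analytic elements with the right vertical growth and invoking the maximum principle is where all the care lies. Everything else is a standard translation between vanishing of a state and vanishing on the GNS side, together with the definition of a simple flow.
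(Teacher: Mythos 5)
Your proof is correct, but it reaches Liouville's theorem by a genuinely different route from the paper, and its endgame is different too. The paper never needs a bound on the lower half-plane at all: it applies Lemma~\ref{04-01-23bx} to the specially chosen function $z\mapsto\omega(\sigma_{-i\beta}(a)a^*\sigma_z(b))$ with $b=b^*$, observes that by the KMS condition this equals $\omega(a^*\sigma_t(b)a)\in\mathbb R$ on the real axis, and invokes the Schwarz reflection principle, so boundedness on the upper half-plane alone yields an entire bounded function; polarization then recovers the general identity $\omega(b\sigma_z(c))=\omega(bc)$, and the contradiction is that $\omega$ would be a $t$-KMS state for every $t$, violating Lemma~\ref{05-01-23}. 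You instead dualize Lemma~\ref{04-01-23bx} (apply it to $a^*,b^*$ and conjugate, using $\sigma_z(b^*)^*=\sigma_{\bar z}(b)$) to bound $G(z)=\omega(\sigma_z(b)a)$ on the lower half-plane, transport it with the KMS shift $G(z)=F(z+i\beta)$, and close the residual strip for $\beta<0$ by Phragm\'en--Lindel\"of; all of these steps check out. In fact your strip step is simpler than you make it: since each $\sigma_t$ is isometric, $\|\sigma_{t+is}(b)\|=\|\sigma_{is}(b)\|$, so for any entire analytic $b$ the function $F$ is automatically bounded on every horizontal strip by continuity of $s\mapsto\|\sigma_{is}(b)\|$ on $[\beta,0]$, and the Gaussian regularization (and the ensuing density argument) can be dispensed with. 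Your conclusion also takes a different path: rather than producing KMS states at all temperatures, you deduce $\pi_\omega\circ\sigma_t=\pi_\omega$ and contradict $\sigma$-simplicity together with non-triviality directly --- which is precisely the mechanism the paper reserves for Lemma~\ref{04-01-23a} and Corollary~\ref{04-01-23}, where it handles $\KMS_\infty(\sigma)\cap\KMS_{-\infty}(\sigma)$; your separating-vector argument (with the tracial case $\beta=0$ treated by hand) is sound. What each approach buys: the reflection trick is shorter, needs no case distinction in the sign of $\beta$ and no Phragm\'en--Lindel\"of, but requires the ad hoc substitution $\sigma_{-i\beta}(a)a^*$ and a polarization step to undo it; your version obtains $\omega(a\sigma_z(b))=\omega(ab)$ for all $a\in A$ in one stroke and delivers the stronger structural conclusion that the flow would be trivial in the GNS representation.
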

\begin{proof} Assume for a contradiction that there is an element $\omega$ in $S^\sigma_\beta \cap \KMS_\infty(\sigma)$.  Let $a,b \in \mathcal A_\sigma$. By Lemma \ref{04-01-23bx} the function $z \mapsto \omega(a\sigma_z(b))$ is bounded for $\Imag z \geq 0$. Since $\omega$ is a $\beta$-KMS state we have that 
$$
\omega(\sigma_{-i\beta}(a)a^* \sigma_z(b)) = \omega(a^*\sigma_z(b)a)
$$
for all $z \in \mathbb C$. In particular,
$$
\omega(\sigma_{-i\beta}(a)a^* \sigma_t(b)) = \omega(a^*\sigma_t(b)a) \in \mathbb R
$$
for $t \in \mathbb R$ when $b=b^* \in \mathcal A_\sigma$. It follows therefore from the Schwarz reflection principle, cf. e.g. Theorem 11.14 in \cite{Ru}, that there is an entire function $F$ such that $F(z) = \omega(\sigma_{-i\beta}(a)a^* \sigma_z(b))$ when $\Imag z \geq 0$ and $F(z) = \overline{F(\overline{z})}$ when $\Imag z \leq 0$. This function is bounded on $\mathbb C$ since $\omega(\sigma_{-i\beta}(a)a^* \sigma_z(b))$ is bounded for $\Imag z \geq 0$, and hence constant by Liouville's theorem. Since $z \mapsto \omega(\sigma_{-i\beta}(a)a^* \sigma_z(b))$ is also entire and agrees with $F$ when $\Imag z \geq 0$, it must be equal to $F$ and hence be constant. Thus
\begin{equation}\label{20-07-23}
\omega(\sigma_{-i\beta}(a)a^* \sigma_z(b)) =\omega(\sigma_{-i\beta}(a)a^* b)
\end{equation}
for all $z \in \mathbb C$ when $a,b \in \mathcal A_\sigma$ and $b=b^*$. Since $\mathcal A_\sigma$ is a $*$-algebra every element of $\mathcal A_\sigma$ is a linear combination of two self-adjoint elements from $\mathcal A_\sigma$. It follows therefore that \eqref{20-07-23} holds for all $a,b \in \mathcal A_\sigma$ and all $z \in \mathbb C$. Using the polarisation identity
$$
\sigma_{-i\beta}(x)y^* = \frac{1}{4} \sum_{k=1}^4 i^k \sigma_{-i \beta}(x+i^ky) (x+i^ky)^*
$$
we find that
$$
\omega(\sigma_{-i\beta}(a)b^* \sigma_z(c)) =\omega(\sigma_{-i\beta}(a)b^* c)
$$
when $z \in \mathbb C$ and $a,b,c \in \mathcal A_\sigma$. Taking $a =1$ we find that $\omega(b\sigma_z(c)) = \omega(bc)$ for all  $z \in \mathbb C$ and $b,c \in \mathcal A_\sigma$. Since $\omega$ is a $\beta$-KMS state we have that $\omega(cb) = \omega( b \sigma_{i\beta}(c)) = \omega(bc)$ for all $b,c \in \mathcal A_\sigma$, and it follows therefore that $\omega$ is a $t$-KMS state for all $t \in \mathbb R$. Since $\sigma$ is a simple flow this is impossible by Lemma \ref{05-01-23}. 
\end{proof}

\begin{lemma}\label{01-01-23} Let $\sigma$ be a simple flow. The map $\Phi$ of \eqref{01-01-23a} is continuous with respect to the weak* topology.
\end{lemma}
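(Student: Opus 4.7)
My plan is to argue by contradiction using sequential continuity, which suffices because $E(A)$ is metrizable in the weak* topology (recall $A$ is separable). So suppose $\omega_n \to \omega$ in weak* with $\omega_n \in S^\sigma_{\beta_n}$ and $\omega \in S^\sigma_\beta$, and assume toward contradiction that $\beta_n \not\to \beta$. By passing to a subsequence I may assume $\beta_n \to \beta' \in [-\infty,\infty]$ with $\beta' \neq \beta$, and the argument then splits into three cases according to whether $\beta'$ is finite, $+\infty$, or $-\infty$.

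The case $\beta' = +\infty$ is immediate: by definition of $\KMS_\infty(\sigma)$, the weak* limit $\omega$ lies in $\KMS_\infty(\sigma)$, but also $\omega \in S^\sigma_\beta$, contradicting Lemma~\ref{05-01-23a}. The case $\beta' = -\infty$ is handled by applying the same circle of ideas to the inverted flow $\sigma' = (\sigma_{-t})_{t\in\mathbb R}$, which is again a simple flow and for which $S^{\sigma'}_{-\gamma} = S^\sigma_\gamma$ and $\KMS_\infty(\sigma') = \KMS_{-\infty}(\sigma)$; Lemma~\ref{05-01-23a} applied to $\sigma'$ gives the contradiction.

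The remaining case $\beta' \in \mathbb R$, $\beta' \neq \beta$, reduces to showing that $\omega$ is a $\beta'$-KMS state for $\sigma$, since then $\omega \in S^\sigma_\beta \cap S^\sigma_{\beta'}$, contradicting Lemma~\ref{05-01-23}. For this I use that the KMS condition passes to limits in both the state and the inverse temperature: for any $a \in \mathcal A_\sigma$ and $b \in A$ we have
\[
\omega_n(ba) = \omega_n\!\left(\sigma_{i\beta_n}(a)\,b\right),
\]
and on the right $\sigma_{i\beta_n}(a) \to \sigma_{i\beta'}(a)$ in norm because $z \mapsto \sigma_z(a)$ is entire in $A$, so both sides converge to $\omega(ba)$ and $\omega(\sigma_{i\beta'}(a)b)$ respectively. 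Hence $\omega$ satisfies the $\beta'$-KMS condition, and is $\sigma$-invariant as a weak* limit of $\sigma$-invariant states, so $\omega \in S^\sigma_{\beta'}$.

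The main obstacle, such as it is, is the $\beta' = \pm\infty$ case, which without a simplicity hypothesis on $\sigma$ would not give a contradiction; this is where Lemma~\ref{05-01-23a} (and its mirror for $\sigma_{-t}$) is essential. Everything else is a routine continuity argument in the two variables $(\omega,\beta)$ coupled with the standard norm-continuity of $z \mapsto \sigma_z(a)$ for analytic $a$.
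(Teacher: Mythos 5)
Your proof is correct and follows essentially the same route as the paper's: the unbounded case is excluded exactly as in the paper, via Lemma~\ref{05-01-23a} applied to $\sigma$ and to the inverted flow $(\sigma_{-t})_{t\in\mathbb R}$, and the finite case via Lemma~\ref{05-01-23}. The only deviation is that where the paper simply cites Proposition 5.3.23 of \cite{BR} for the stability of the KMS condition under simultaneous convergence $\omega_n \to \omega$, $\beta_n \to \beta'$, you prove this directly from the norm-continuity of $z \mapsto \sigma_z(a)$ for $a \in \mathcal A_\sigma$ — a perfectly valid, self-contained substitute (note only that with the paper's convention $\omega(ab)=\omega(b\sigma_{i\beta}(a))$ your displayed identity should read $\omega_n(ba)=\omega_n(\sigma_{-i\beta_n}(a)\,b)$, a harmless sign-of-convention slip that does not affect the argument).
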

\begin{proof} Let $\{\omega_n\}_{n =1}^\infty$ and $\omega$ be elements of $\bigcup_{\beta \in \mathbb R} S^\sigma_\beta $ and assume that $\lim_{n\to \infty} \omega_n = \omega$ in the weak* topology. Set $\beta := \Phi(\omega)$ and $\beta_n := \Phi(\omega_n)$. If $\{\beta_n\}$ is not bounded it follows that $\omega$ is either a $\KMS_\infty$ state or a $\KMS_{-\infty}$-state, and both possibilities are impossible by Lemma \ref{05-01-23a} since $\omega$ is a $\beta$-KMS state for $\sigma$ and a $-\beta$-KMS state for the inverted flow $\sigma_{-t}$. Hence $\sup_n|\beta_n| < \infty$. If $\{\beta_n\}$ does not convergence to $\beta$ there is a $\beta' \neq \beta$ and a subsequence $\{\beta_{n_k}\}$ such that $\lim_{k \to \infty} \beta_{n_k} = \beta'$. Then $\omega$ is a $\beta'$-KMS state for $\sigma$ by Proposition 5.3.23 of \cite{BR} which is impossible by Lemma \ref{05-01-23}. It follows that $\lim_{n \to \infty} \beta_n = \beta$.
\end{proof}

\begin{prop}\label{01-01-23e} Let $\sigma$ be a simple flow. Then $\bigcup_{\beta \in \mathbb R} S^\sigma_\beta$ is a second countable locally compact Hausdorff space in the weak* topology and $$
\left(\bigcup_{\beta \in \mathbb R} S^\sigma_\beta ,\Phi\right)
$$ 
is a proper simplex bundle isomorphic to the $\KMS$ bundle of $\sigma$.
\end{prop}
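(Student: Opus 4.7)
The strategy is to package the three preceding lemmas into a single transfer of structure: Lemma \ref{05-01-23} tells us that the obvious map $\Psi : S^\sigma \to \bigcup_{\beta \in \mathbb R} S^\sigma_\beta$ given by $(\omega,\beta) \mapsto \omega$ is a bijection; Lemma \ref{01-01-23} provides continuity of the inverse map; and Proposition \ref{12-11-22}, which gives that $(S^\sigma,\pi^\sigma)$ is a proper simplex bundle, can then be pulled across $\Psi$ to produce the desired structure on $\bigcup_{\beta \in \mathbb R} S^\sigma_\beta$. There is essentially no remaining analytic content — everything has been set up by the preceding lemmas.

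First I would verify that $\Psi$ is a homeomorphism when the target is equipped with the weak* topology. Continuity of $\Psi$ is immediate from the definition of the product topology on $E(A) \times \mathbb R$. For the inverse, observe that $\Psi^{-1}(\omega) = (\omega, \Phi(\omega))$, which is continuous precisely because $\Phi$ is continuous by Lemma \ref{01-01-23}. Since $\Psi$ is a bijection by Lemma \ref{05-01-23}, this produces a homeomorphism. The topological properties second countable, locally compact and Hausdorff hold for $S^\sigma$ (it is a subspace of the separable metric compact space $E(A) \times \mathbb R$, and $\pi^\sigma$ is proper by Proposition \ref{12-11-22}) and therefore transfer across $\Psi$ to $\bigcup_{\beta \in \mathbb R} S^\sigma_\beta$.

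Finally I would transfer the proper simplex bundle structure. By construction $\Phi \circ \Psi = \pi^\sigma$, and for each $\beta \in \mathbb R$ the restriction of $\Psi$ to the fiber $(\pi^\sigma)^{-1}(\beta) = S^\sigma_\beta \times \{\beta\}$ is the first-coordinate projection onto $\Phi^{-1}(\beta) = S^\sigma_\beta$, which is an affine homeomorphism of Choquet simplices. Hence $\Psi$ is an isomorphism of simplex bundles from $(S^\sigma,\pi^\sigma)$ onto $(\bigcup_{\beta \in \mathbb R} S^\sigma_\beta,\Phi)$, and the two defining conditions of Definition \ref{25-08-21} — properness of the projection and separation of points by elements of $\mathcal A(\cdot,\cdot)$ — are preserved by such an isomorphism. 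Combined with Proposition \ref{12-11-22}, this yields the proposition. The only conceivable obstacle — a possible failure of $\Psi$ to be a homeomorphism on account of sequences of KMS states with inverse temperatures escaping to $\pm\infty$ — has already been ruled out by Lemma \ref{05-01-23a} within the proof of Lemma \ref{01-01-23}.
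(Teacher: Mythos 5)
Your proposal is correct and follows essentially the same route as the paper: the paper's proof likewise notes that Lemma \ref{05-01-23} makes the map \eqref{31-12-22a} a bijection and that Lemma \ref{01-01-23} makes $\omega \mapsto (\omega,\Phi(\omega))$ a continuous inverse, whence all structure transfers from $(S^\sigma,\pi^\sigma)$ via Proposition \ref{12-11-22}. Your write-up merely makes explicit the transfer of the bundle axioms and the fiberwise affine identifications, which the paper leaves implicit.
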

\begin{proof} Thanks to Lemma \ref{01-01-23} we can define a continuous map 
$$
\Psi : \bigcup_{\beta \in \mathbb R} S^\sigma_\beta  \to S^\sigma
$$
by $\Psi(\omega) = (\omega, \Phi(\omega))$. This is clearly the inverse of the map \eqref{31-12-22a}.
\end{proof}


\begin{lemma}\label{04-01-23a} Assume that $A$ is unital and let $\omega \in \KMS_\infty(\sigma) \cap \KMS_{-\infty}(\sigma)$. There is a $\sigma$-invariant proper ideal $I \subseteq A$ such that $\sigma$ induces the trivial flow on $A/I$.
\end{lemma}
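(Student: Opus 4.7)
The plan is to produce the desired ideal as $I := \ker \pi_\omega$, where $(H_\omega,\pi_\omega,\Omega_\omega)$ is the GNS triple of $\omega$. This will do the job as soon as we can show that $\pi_\omega \circ \sigma_t = \pi_\omega$ for every $t \in \mathbb R$, since then $\sigma_t(a) - a \in \ker \pi_\omega = I$ for every $a$, making $I$ a proper (because $\omega$ is a state), closed, two-sided, $\sigma$-invariant ideal on whose quotient $\sigma$ acts trivially.

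The first step is to extract a bound on the \emph{whole} complex plane from the hypothesis that $\omega$ lies in both $\KMS_\infty(\sigma)$ and $\KMS_{-\infty}(\sigma)$. By \reflemma{04-01-23bx} applied to $\sigma$,
\[
|\omega(a\sigma_z(b))| \leq \|a\|\|b\| \qquad \text{for } \Imag z \geq 0,\ a \in A,\ b \in \mathcal A_\sigma.
\]
Since $\omega \in \KMS_{-\infty}(\sigma)$ is the same as $\omega \in \KMS_\infty(\tau)$ for the inverted flow $\tau_t := \sigma_{-t}$, whose analytic extension is $\tau_z(b) = \sigma_{-z}(b)$, the same lemma applied to $\tau$ and followed by the substitution $z \mapsto -z$ yields the matching bound for $\Imag z \leq 0$. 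Hence the entire function $z \mapsto \omega(a\sigma_z(b))$ is bounded on all of $\mathbb C$, and Liouville's theorem forces it to be constant. Evaluating at $z = 0$ and at $z = t \in \mathbb R$ gives $\omega(a\sigma_t(b)) = \omega(ab)$ for every $a \in A$, $b \in \mathcal A_\sigma$ and $t \in \mathbb R$, and density of $\mathcal A_\sigma$ in $A$ extends this to all $b \in A$.

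The second step is to transfer this identity into the GNS representation. Being a weak$^*$ limit of $\sigma$-invariant KMS states, $\omega$ is itself $\sigma$-invariant, so there is a strongly continuous unitary group $(U_t)_{t\in\mathbb R}$ on $H_\omega$ implementing $\sigma$ and fixing $\Omega_\omega$. The vanishing $\omega(a(\sigma_t(b) - b)) = 0$ for every $a \in A$ combined with cyclicity of $\Omega_\omega$ forces $\pi_\omega(\sigma_t(b))\Omega_\omega = \pi_\omega(b)\Omega_\omega$, and consequently
\[
U_t\pi_\omega(b)\Omega_\omega \;=\; U_t\pi_\omega(b)U_t^*\,\Omega_\omega \;=\; \pi_\omega(\sigma_t(b))\Omega_\omega \;=\; \pi_\omega(b)\Omega_\omega.
\]
Density of $\pi_\omega(A)\Omega_\omega$ in $H_\omega$ then forces $U_t = 1$, so $\pi_\omega(\sigma_t(a)) = U_t\pi_\omega(a)U_t^* = \pi_\omega(a)$ for every $a$ and every $t$, which is exactly what we needed.

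The main obstacle is really the first step. The absence of any finite KMS condition on $\omega$ itself rules out the Schwarz reflection trick used in the proof of \reflemma{05-01-23a}, but the two-sided asymptotic hypothesis $\omega \in \KMS_\infty(\sigma) \cap \KMS_{-\infty}(\sigma)$ is exactly enough to supply two matching half-plane bounds, and the passage from these to a bounded entire function is precisely where that hypothesis is used. Everything after Liouville is routine GNS bookkeeping.
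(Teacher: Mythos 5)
Your proof is correct and takes essentially the same route as the paper: both combine Lemma~\ref{04-01-23bx} applied to $\sigma$ and to the inverted flow to bound $z \mapsto \omega(a\sigma_z(b))$ on all of $\mathbb C$, invoke Liouville to get $\omega(a\sigma_t(b)) = \omega(ab)$, and then take $I = \ker \pi_\omega$. The only difference is cosmetic: where the paper differentiates $t \mapsto \omega(a\sigma_t(b))$ at $t=0$ to show the GNS generator $D_\omega$ vanishes, you use cyclicity of $\Omega_\omega$ to conclude $U_t = 1$ directly, a marginally more elementary finish that avoids handling the unbounded generator and its core.
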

\begin{proof} Since $\omega$ is a $\KMS_\infty$ state for the inverted flow $\sigma_{-t}$ it follows from Lemma \ref{04-01-23bx} that $\sup_{z \in \mathbb C} \left|\omega(a\sigma_{z}(b))\right| \leq \|a\|\|b\|$ and then by Liouville's theorem that $\omega(a\sigma_z(b)) = \omega(ab)$ for all $z \in \mathbb C$ when $a \in A, \ b \in \mathcal A_\sigma$. Let $(H_\omega,\Omega_\omega,\pi_\omega)$ be the GNS representation of $A$ coming from $\omega$. As a limit of $\sigma$-invariant states $\omega$ is itself $\sigma$-invariant and there is therefore a densely defined self-adjoint operator $D_\omega$ on $H_\omega$ such that $\pi_\omega(\mathcal A_\sigma)\Omega_\omega$ is a core for $D_\omega$ and 
$$
e^{itD_\omega}\pi_\omega(a)\Omega_\omega = \pi_\omega(\sigma_t(a))\Omega_\omega
$$
when $t \in \mathbb R$ and $a \in \mathcal A_\sigma$, cf. \cite{BR}; in particular Corollary 3.1.7 of \cite{BR}. Since $t \mapsto \omega(a\sigma_t(b))$ is constant we find that
\begin{align*}
&\left<  D_\omega\pi_\omega(b)\Omega_\omega, \pi_{\omega}(a^*)\Omega_\omega \right> =  -i \frac{\mathrm d}{\mathrm d t} \left<  e^{it D_\omega} \pi_\omega(b) \Omega_\omega ,  \pi_\omega(a^*) \Omega_\omega\right>|_{t =0} \\
& =-i \frac{\mathrm d}{\mathrm d t} \left<  \pi_\omega(\sigma_t(b)) \Omega_\omega, \pi_\omega(a^*) \Omega_\omega \right>|_{t =0}  =-i \frac{\mathrm d}{\mathrm d t} \omega(a\sigma_t(b))|_{t =0} = 0\\
\end{align*}
for all $a \in A, \ b \in \mathcal A_\sigma$, implying that $D_\omega =0$. Hence
$$
\pi_\omega(\sigma_t(a)) = e^{it D_\omega} \pi_{\omega}(a)e^{-it D_\omega} = \pi_\omega(a)
$$
for all $a \in A$. Then $I := \ker \pi_\omega$ is an ideal with the stated properties. 
\end{proof}

\begin{cor}\label{04-01-23}  Let $\sigma$ be a simple flow. Then $\KMS_\infty(\sigma) \cap  \KMS_{-\infty}(\sigma)= \emptyset$.
\end{cor}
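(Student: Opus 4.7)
The plan is to derive this as an immediate consequence of Lemma \ref{04-01-23a} combined with the definition of a simple flow. Suppose toward a contradiction that there exists $\omega \in \KMS_\infty(\sigma) \cap \KMS_{-\infty}(\sigma)$. Since $\sigma$ is a simple flow, $A$ is in particular unital and separable, so Lemma \ref{04-01-23a} applies and produces a $\sigma$-invariant proper ideal $I \subseteq A$ on which $\sigma$ descends to the trivial flow on $A/I$.

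Now I would invoke $\sigma$-simplicity: by definition of a simple flow, $A$ has no nontrivial closed two-sided $\sigma$-invariant ideal, so $I \in \{0, A\}$. Since $I$ is proper, we must have $I = 0$. But then $A/I = A$, and Lemma \ref{04-01-23a} asserts that $\sigma$ acts trivially on this quotient, i.e., $\sigma_t = \id_A$ for all $t \in \mathbb R$. This contradicts the non-triviality clause in the definition of a simple flow, yielding the desired conclusion.

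There is no real obstacle here: the corollary is essentially the contrapositive of Lemma \ref{04-01-23a} read against the two defining properties of a simple flow ($\sigma$-simplicity and non-triviality). The only thing to double-check is that both hypotheses are genuinely used, which they are: $\sigma$-simplicity rules out $I$ being a nontrivial proper ideal, and non-triviality rules out $I = 0$.
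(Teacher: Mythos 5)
Your proof is correct and is exactly the paper's argument: the paper's own proof consists of the single sentence ``Since $\sigma$ is simple there can not be an ideal $I$ in $A$ with the properties specified in Lemma \ref{04-01-23a}'', which your write-up merely unpacks into its two cases ($\sigma$-simplicity forcing $I=\{0\}$, and non-triviality excluding $I=\{0\}$). Nothing to add.
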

\begin{proof} Since $\sigma$ is simple there can not be an ideal $I$ in $A$ with the properties specified in Lemma \ref{04-01-23a}.
\end{proof}

\begin{prop}\label{05-01-23b} Let $\sigma$ be a simple flow. Let $S$ be the closure in the weak* topology of the set $\bigcup_{\beta \in \mathbb R} S^\sigma_\beta$ of all $\KMS$ states for $\sigma$. There is a continuous map $\Phi : S \to [-\infty, \infty]$ such that $\Phi^{-1}(\pm \infty) = \KMS_{\pm \infty}(\sigma)$ and such that
$$
(\Phi^{-1}(\mathbb R),\Phi)
$$
is a proper simplex bundle isomorphic to the $\KMS$ bundle for $\sigma$.
\end{prop}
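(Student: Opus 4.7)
\emph{Proof plan.} My strategy is to decompose $S$ as the disjoint union
\[
S = \left(\bigcup_{\beta \in \mathbb R} S^\sigma_\beta\right) \sqcup \KMS_\infty(\sigma) \sqcup \KMS_{-\infty}(\sigma),
\]
and then define $\Phi : S \to [-\infty,\infty]$ by extending the map of \eqref{01-01-23a} through $\Phi \equiv \pm \infty$ on $\KMS_{\pm\infty}(\sigma)$. The conclusion about the proper simplex bundle will then be immediate from Proposition \ref{01-01-23e}.

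For the decomposition, the three pieces are pairwise disjoint: $S^\sigma_\beta \cap \KMS_{\pm\infty}(\sigma) = \emptyset$ for every $\beta \in \mathbb R$ by Lemma \ref{05-01-23a} (applied to $\sigma$ and to its inversion), while $\KMS_\infty(\sigma) \cap \KMS_{-\infty}(\sigma) = \emptyset$ by Corollary \ref{04-01-23}. That these three sets exhaust $S$ I will establish by a subsequence argument: for $\omega = \lim_n \omega_n$ with $\omega_n \in S^\sigma_{\beta_n}$, either some subsequence of $\{\beta_n\}$ is bounded, in which case Proposition 5.3.23 of \cite{BR} places $\omega$ in some $S^\sigma_{\beta'}$, or else a subsequence tends to $\pm\infty$, placing $\omega$ in $\KMS_{\pm\infty}(\sigma)$.

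The substance of the proof is continuity of $\Phi$, and this is where I expect the genuine work to lie. Since both $\KMS_{\pm\infty}(\sigma)$ are intersections of weak*-closed subsets of the state space they are already closed in $S$, so it suffices to verify sequential continuity at each of the three types of points. At $\omega \in \bigcup_\beta S^\sigma_\beta$, Lemma \ref{01-01-23} handles approximants drawn from $\bigcup_\beta S^\sigma_\beta$, while Lemma \ref{05-01-23a} prevents $\omega$ from being a limit of points in the closed sets $\KMS_{\pm\infty}(\sigma)$ at all. At $\omega \in \KMS_\infty(\sigma)$, approximants from $\KMS_{-\infty}(\sigma)$ are excluded by closedness of $\KMS_{-\infty}(\sigma)$ together with Corollary \ref{04-01-23}; for approximants $\omega_n \in S^\sigma_{\beta_n}$, a bounded subsequence of $\{\beta_n\}$ would put $\omega$ into some $S^\sigma_{\beta'}$ by Proposition 5.3.23 of \cite{BR}, contradicting Lemma \ref{05-01-23a}, and a subsequence tending to $-\infty$ would contradict Corollary \ref{04-01-23}; hence $\beta_n \to +\infty$, which is exactly the required convergence $\Phi(\omega_n) \to \Phi(\omega) = +\infty$. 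The symmetric argument handles points of $\KMS_{-\infty}(\sigma)$.

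Finally, by construction $\Phi^{-1}(\mathbb R) = \bigcup_\beta S^\sigma_\beta$, and Proposition \ref{01-01-23e} identifies this, equipped with $\Phi$, as a proper simplex bundle isomorphic to the $\KMS$ bundle of $\sigma$.
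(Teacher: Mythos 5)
Your proof is correct and is essentially the paper's own argument made explicit: the paper's proof is the one-line citation of Proposition~\ref{01-01-23e}, Lemma~\ref{05-01-23a} and Corollary~\ref{04-01-23}, and your decomposition of $S$ together with the subsequence arguments (using closedness of $\KMS_{\pm\infty}(\sigma)$ and Proposition 5.3.23 of \cite{BR}, exactly as in Lemma~\ref{01-01-23}) supplies precisely the routine details behind that citation.
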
 
\begin{proof} This follows from Proposition \ref{01-01-23e}, Lemma \ref{05-01-23a} and Corollary  \ref{04-01-23}. 
\end{proof}

\subsection{Consequences for the sets $\KMS_{\pm \infty}(\sigma)$.}

 Given a locally compact Hausdorff space $X$ we denote by $\beta X$ the Stone-\v{C}ech compactification of $X$ and by
 $$
 \partial X := \beta X \backslash X
 $$
its Stone-\v{C}ech boundary, or remainder as it is often called. There are many ways to define the Stone-\v{C}ech compactification of $X$; here we view it as the Gelfand spectrum,  a.k.a. the maximal ideal space or the character space, of the $C^*$-algebra $C_b(X)$ of continuous bounded functions on $X$.

 Given a proper simplex bundle $(S,\pi)$, set $S_+ := \pi^{-1}([0,\infty))$ and $S_- := \pi^{-1}((-\infty,0])$.

\begin{lemma}\label{05-08-23} Let $(S,\pi)$ be a proper simplex bundle and $\sigma$ a simple flow such that $(S,\pi)$ is isomorphic to $(S^\sigma,\pi^\sigma)$. There are continuous surjective maps $\chi_+ : \partial S_+ \to \KMS_{\infty}(\sigma)$ and $\chi_- : \partial S_- \to \KMS_{-\infty}(\sigma)$.
\end{lemma}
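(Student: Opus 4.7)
The plan is to use the universal property of the Stone-\v{C}ech compactification applied to the natural inclusion $S_+ \hookrightarrow E(A)$ coming from the bundle structure. I describe the construction of $\chi_+$ in detail; the map $\chi_-$ is obtained entirely symmetrically by working with $S_-$ and $\KMS_{-\infty}(\sigma)$.

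First, use the given isomorphism to identify $(S,\pi)$ with $(S^\sigma,\pi^\sigma)$. By Proposition \ref{01-01-23e} the topology on $S^\sigma = \bigcup_{\beta \in \mathbb R} S^\sigma_\beta$ coincides with the subspace topology inherited from the compact Hausdorff state space $E(A)$, so the inclusion $\iota : S_+ \hookrightarrow E(A)$ is continuous. Since $E(A)$ is compact Hausdorff, the universal property of $\beta S_+$ yields a unique continuous extension $\tilde\iota : \beta S_+ \to E(A)$, and I set $\chi_+ := \tilde\iota|_{\partial S_+}$.

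The heart of the argument is the identity $\tilde\iota^{-1}(S_+) = S_+$. If some $p \in \partial S_+$ satisfied $\tilde\iota(p) = s \in S_+$, pick a net $(s_\alpha)$ in $S_+$ with $s_\alpha \to p$ in $\beta S_+$; by continuity of $\tilde\iota$, $s_\alpha \to s$ in $E(A)$, hence in $S_+$ (subspace topology), hence in $\beta S_+$ (since $S_+ \hookrightarrow \beta S_+$ is a subspace embedding). Hausdorffness of $\beta S_+$ then forces $p = s \in S_+$, contradicting $p \in \partial S_+$. Combined with the injectivity of $\tilde\iota|_{S_+} = \iota$, this gives $\tilde\iota(\partial S_+) = \tilde\iota(\beta S_+) \setminus S_+ = \overline{S_+} \setminus S_+$.

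Finally I would identify $\overline{S_+} \setminus S_+$ with $\KMS_\infty(\sigma)$ using Proposition \ref{05-01-23b}. The continuous extension $\Phi$ of the temperature map to the closure of $\bigcup_{\beta \in \mathbb R} S^\sigma_\beta$ restricts to $\overline{S_+}$ with values in $[0,\infty]$ by continuity. Any $\omega \in \overline{S_+}$ with $\Phi(\omega) \in [0,\infty)$ would lie in $\Phi^{-1}(\Phi(\omega)) = S^\sigma_{\Phi(\omega)} \subseteq S_+$, so $\omega \in \overline{S_+}\setminus S_+$ forces $\Phi(\omega) = \infty$, i.e.\ $\omega \in \KMS_\infty(\sigma)$. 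Conversely, every $\omega \in \KMS_\infty(\sigma)$ is a weak* limit of a sequence in $\bigcup_{\beta \geq R} S^\sigma_\beta \subseteq S_+$, so $\omega \in \overline{S_+}$, while Lemma \ref{05-01-23a} ensures $\KMS_\infty(\sigma) \cap S_+ = \emptyset$. Thus $\chi_+$ is a continuous surjection onto $\KMS_\infty(\sigma)$. I expect the main technical obstacle to be the bookkeeping around $\tilde\iota^{-1}(S_+) = S_+$, which is where the coincidence of the bundle topology with the weak* topology (Proposition \ref{01-01-23e}) combines with the subspace embedding $S_+ \hookrightarrow \beta S_+$ to exclude the pathological possibility that boundary points collapse onto interior states.
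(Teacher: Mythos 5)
Your proposal is correct and essentially reproduces the paper's own argument: your Stone--\v{C}ech extension $\tilde\iota$ of the inclusion $S_+\hookrightarrow E(A)$ is exactly the map $\chi_+$ the paper obtains by Gelfand duality from the injective $*$-homomorphism $\chi^*: C(\overline{S^\sigma_+})\to C_b(S_+)$, and your exclusion of boundary points landing in $S_+$ together with the identification $\tilde\iota(\partial S_+)=\overline{S_+}\setminus S_+=\KMS_\infty(\sigma)$ is the same use of Lemma \ref{05-01-23a} (packaged through Propositions \ref{01-01-23e} and \ref{05-01-23b}) that the paper makes directly. The only cosmetic difference is that you verify explicitly, via the extended temperature map $\Phi$, the inclusion $\chi_+(\partial S_+)\subseteq\KMS_\infty(\sigma)$ that the paper dismisses as ``easy''.
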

\begin{proof} Let $\chi : S \to S^\sigma$ be a homeomorphism such that $\pi^\sigma \circ \chi = \pi$. Then $\chi$ restricts to a homeomorphism $\chi : S_+ \to S^\sigma_+$ and we define a $*$-homomorphism $\chi^* : C (\overline{ S^\sigma_+ }) \to C_{b}(S_+)$ by $\chi^*(h) = h\circ \chi$. Note that $\chi^*$ is injective since $S^\sigma_+ = \chi(S_+)$ is dense in $\overline{S^\sigma_+}$. It follows that there is a continuous surjection $\chi_+ : \beta S_+ \to \overline{S^\sigma_+}$ such that $\chi^*(h)(\xi)= h(\chi_+(\xi))$ for all $\xi \in \beta S_+$ when we make the identification $  C_{b}(S_+) = C(\beta S_+)$. Note that $\chi_+|_{S_+} = \chi|_{S_+}$. We claim that $\chi_+(\partial S_+) = \KMS_\infty(\sigma)$. To see this let $\eta \in \KMS_\infty(\sigma)$. Then $\eta = \chi_+(\xi)$ for some $\xi \in \beta S_+$. If $\xi \in S_+$ we have that $ g(\eta) = g(\chi_+(\xi)) = \chi^*(g)(\xi) =g(\chi(\xi))$ for all $g \in C(\overline{S^\sigma_+})$ implying that $\eta = \chi(\xi)$. But $\pi^\sigma(\chi(\xi)) = \pi(\xi) \in [0,\infty)$ which means that $\eta \in \KMS_\infty(\sigma) \cap S^\sigma_{\pi(\xi)}$, contradicting Lemma \ref{05-01-23a}. Hence $\xi \in \partial S_+$ and we conclude that $\KMS_\infty(\sigma) \subseteq \chi_+(\partial S_+)$. The converse inclusion is easy. The map $\chi_-$ is constructed in the same way.
\end{proof}

\begin{cor}\label{06-08-23a}  Let $(S,\pi)$ be a proper simplex bundle and $\sigma$ a simple flow such that $(S,\pi)$ is isomorphic to $(S^\sigma,\pi^\sigma)$. If $\pi(S)$ contains an interval of the form $[r,\infty)$ the space $\KMS_\infty(\sigma)$ is a connected compact metric space.  
\end{cor}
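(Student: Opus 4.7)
The plan is to express $\KMS_\infty(\sigma)$ as a decreasing intersection of compact connected subsets of the state space $E(A)$ and invoke the classical fact that such intersections are connected. For each $R > 0$ set $K_R := \overline{\bigcup_{\beta \geq R} S^\sigma_\beta}$, with the closure taken in the weak$^*$-topology on $E(A)$. By definition $\KMS_\infty(\sigma) = \bigcap_{R > 0} K_R$, so compactness and metrizability follow at once from the fact that $E(A)$ is compact metric (since $A$ is separable).

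The main step is to show that $K_R$ is connected for every $R \geq r$. Since the closure of a connected set is connected, it suffices to show that $U_R := \bigcup_{\beta \geq R} S^\sigma_\beta$ is connected in the weak$^*$-topology. By \refprop{01-01-23e} the set $U_R$, with the weak$^*$-topology, is homeomorphic to $(\pi^\sigma)^{-1}([R,\infty)) \subseteq S^\sigma$; composing with the hypothesized isomorphism $(S,\pi) \cong (S^\sigma,\pi^\sigma)$, it is enough to prove that $\pi^{-1}([R,\infty)) \subseteq S$ is connected.

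For this I would appeal to the standard fact that a continuous closed surjection with connected fibers onto a connected base has connected domain. The restriction
$$
\pi : \ \pi^{-1}([R,\infty)) \ \to \ [R,\infty)
$$
is proper (because $\pi$ itself is proper), hence closed since the target is Hausdorff, and it is surjective because $\pi(S) \supseteq [r,\infty) \supseteq [R,\infty)$. Its fibers are Choquet simplices, which are convex and therefore connected, and the base $[R,\infty)$ is connected; so any clopen decomposition of $\pi^{-1}([R,\infty))$ would project (using closedness and connectedness of fibers) to a non-trivial clopen decomposition of $[R,\infty)$, which is impossible.

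With each $K_R$ ($R \geq r$) compact, non-empty (since $[R,\infty) \subseteq \pi(S)$ forces $S^\sigma_\beta \neq \emptyset$ for some $\beta \geq R$) and connected in the Hausdorff space $E(A)$, the nested intersection $\bigcap_{R \geq r} K_R = \KMS_\infty(\sigma)$ is connected by the standard theorem that a decreasing intersection of compact connected subsets of a Hausdorff space is connected. The only point that needs a bit of care is verifying properness of the restricted map $\pi|_{\pi^{-1}([R,\infty))}$, but this is immediate because $[R,\infty)$ is closed in $\mathbb R$ so $\pi^{-1}(K) \cap \pi^{-1}([R,\infty)) = \pi^{-1}(K \cap [R,\infty))$ is compact whenever $K \subseteq [R,\infty)$ is compact.
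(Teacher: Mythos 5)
Your proof is correct, but it takes a genuinely different route from the paper's. The paper first establishes exactly your key step --- that $\pi^{-1}([R,\infty))$ is connected, by essentially the same argument you give (properness of $\pi$ makes the images of a putative closed partition closed in $[R,\infty)$, and connectedness of the simplex fibers prevents those images from overlapping) --- but it then transfers connectedness to $\KMS_\infty(\sigma)$ through the Stone-\v{C}ech compactification: it shows that $\partial S_+ = \bigcap_{n \geq r} \overline{\pi^{-1}([n,\infty))}$ is a decreasing intersection of compact connected subsets of $\beta S_+$, hence connected, and concludes via the continuous surjection $\chi_+ : \partial S_+ \to \KMS_\infty(\sigma)$ of \reflemma{05-08-23}. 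You instead apply the nested-intersection theorem directly in the state space $E(A)$, to the sets $K_R = \overline{\bigcup_{\beta \geq R} S^\sigma_\beta}$, using \refprop{01-01-23e} to identify $\bigcup_{\beta \geq R} S^\sigma_\beta$ (in the weak* topology) with $\pi^{-1}([R,\infty))$, and then the defining formula for $\KMS_\infty(\sigma)$ finishes the proof with no mention of $\beta S_+$ or of \reflemma{05-08-23}. Your version is more elementary and self-contained, and it yields non-emptiness of $\KMS_\infty(\sigma)$ for free; the paper's detour buys the stronger structural statement that $\partial S_+$ itself is connected, which fits the surrounding framework in which $\KMS_{\pm\infty}(\sigma)$ are treated as continuous images of Stone-\v{C}ech boundaries (\reflemma{05-08-23}, and later \reflemma{08-08-23}). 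One small imprecision worth fixing: ``proper hence closed since the target is Hausdorff'' is not valid for arbitrary Hausdorff targets; it holds here because $[R,\infty)$ is locally compact (indeed first countable), so proper maps into it are closed --- alternatively, argue closedness directly with sequences, since $S$ is second countable locally compact Hausdorff and therefore metrizable.
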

\begin{proof} If $r > 0$ and $[r,\infty) \subseteq \pi(S)$, the set $\pi^{-1}([r,\infty))$ is connected. To see this, assume $ \pi^{-1}([r,\infty)) =A \cup B$ where $A$ and $B$ are closed in $S$, non-empty and disjoint. Since $\pi$ is proper, $\pi(A)$ and $\pi(B)$ are closed in $[r,\infty)$ and $[r,\infty) = \pi(A)\cup \pi(B)$. If $t \in \pi(A) \cap \pi(B)$ there are elements $a \in A \cap \pi^{-1}(t)$ and $b \in B \cap \pi^{-1}(t)$, and since $\pi^{-1}(t)$ is a simplex there is a continuous path in $\pi^{-1}(t)$ connecting $a$ and $b$. The properties of $A$ and $B$ imply that this is impossible. Hence $\pi(A)\cap \pi(B) = \emptyset$, which is also impossible since $[r,\infty)$ is connected, whence $\pi^{-1}([r,\infty))$ must be connected. For $n \in \mathbb N, \ n \geq r$, let $\overline{\pi^{-1}([n,\infty))}$ be the closure of $\pi^{-1}([n,\infty))$ in $\beta S_+$. From what we have just shown it follows that $\overline{\pi^{-1}([n,\infty))}$ is connected, and hence so is
$$
\partial S_+ = \bigcap_{n \geq r} \overline{\pi^{-1}([n,\infty))} .
$$
It follows then from Lemma \ref{05-08-23} that $\KMS_\infty(\sigma)$ is connected.
\end{proof}

It follows that $\KMS_{-\infty}(\sigma)$ is connected if $\pi(S)$ contains an interval of the form $(-\infty,r]$.




\section{The main result}
 What we prove here is the following

\begin{thm}\label{main} Let $B$ be a simple infinite dimensional unital AF-algebra and let $(S,\pi)$ be a proper simplex bundle such that $\pi(S)$ is neither bounded above nor below and $\pi^{-1}(0)$ is affinely homeomorphic to the tracial state space of $B$. Let $D_+$ and $D_-$ be non-empty compact metric spaces such that $D_+$ is connected if $\pi(S)$ contains an interval of the form $[r,\infty)$ and $D_-$ is connected if $\pi(S)$ contains an interval of the form $(-\infty,r]$.

There is a $2\pi$-periodic flow $\sigma$ on $B$ such that the $\KMS$ bundle of $\sigma$ is isomorphic to $(S,\pi)$, the set of $\KMS_{\infty}$ states of $\sigma$ is homeomorphic to $D_+$ and the set of $\KMS_{-\infty}$ states of $\sigma$ is homeomorphic to $D_-$.
\end{thm}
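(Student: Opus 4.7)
The plan is to realize the desired flow $\sigma$ as the restriction of the dual flow $\hat\alpha$ to a corner $e(A\rtimes_\alpha \mathbb Z)e$, as already outlined in the introduction. Concretely, I will construct an AF-algebra $A$, an automorphism $\alpha$ of $A$ and a projection $e\in A$ such that the corner $e(A\rtimes_\alpha\mathbb Z)e$ is isomorphic to $B$ and the restricted dual flow $\hat\alpha^e$ has KMS bundle $(S,\pi)$, $\KMS_\infty$ space $\cong D_+$ and $\KMS_{-\infty}$ space $\cong D_-$. Transporting $\hat\alpha^e$ to $B$ via the isomorphism yields the sought flow $\sigma$, and the $2\pi$-periodicity comes for free because $\hat\alpha_t(au^k)=e^{ikt}au^k$ has period $2\pi$.

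The first ingredient is the KMS-state parametrisation for dual flows. Following \cite{ET} and \cite{BEH}, the $\beta$-KMS states of $\hat\alpha^e$ correspond bijectively to densely defined lower semicontinuous traces $\tau$ on $A$ satisfying $\tau(e)=1$ and the cocycle identity $\tau\circ\alpha = e^{-\beta}\tau$. On an AF-algebra these traces are encoded by states on the dimension group $K_0(A)$ which are eigenvectors of $\alpha_*$ with eigenvalue $e^{-\beta}$. Given this dictionary, one builds a Bratteli diagram with a $\mathbb Z$-grading (implementing $\alpha$) whose eigenvector spaces at each eigenvalue $e^{-\beta}$ form the fibre $\pi^{-1}(\beta)$. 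This is precisely what is done in \cite{ET}, and it suffices to give flows realising $(S,\pi)$ on the corner, together with an identification of the corner with a prescribed simple unital AF-algebra via matching of tracial state spaces and dimension groups (Elliott's classification).

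The new ingredient is the control of $\KMS_{\pm\infty}$. Here I combine the construction of \cite{ET} with the strategy of \cite{Th3}. On top of the diagram realising $(S,\pi)$ one superimposes additional ``tower'' structures which support, for each compact metric space $D_+$, a sequence of $\beta_n$-KMS states with $\beta_n\to\infty$ whose weak*-cluster set is exactly $D_+$, and symmetrically for $D_-$. The combinatorial idea is to attach to the diagram a tree of finite-dimensional summands parametrised by the dyadic Cantor set in such a way that the states they carry cluster into a weak*-closed set homeomorphic to $D_+$ (any compact metric space is a continuous image of the Cantor set). When $\pi(S)\supseteq [r,\infty)$, Corollary \ref{06-08-23a} forces $\partial S_+$, and hence $D_+$, to be connected; conversely the connectedness hypothesis on $D_+$ in this regime lets one merge the cluster set into a connected one. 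The $\KMS_{-\infty}$ case is handled symmetrically. Crucially, the extra summands must be added in such a way that they do not produce any new $\beta$-KMS states for finite $\beta$ (so the KMS bundle remains $(S,\pi)$) and do not alter the isomorphism class of the corner.

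Finally one identifies the corner $e(A\rtimes_\alpha\mathbb Z)e$ with $B$. Since $B$ is a simple unital AF-algebra, by Elliott's classification it suffices to match the pointed ordered dimension group. The tracial state space of $e(A\rtimes_\alpha\mathbb Z)e$ is governed by $\pi^{-1}(0)$, which by hypothesis is affinely homeomorphic to $T(B)$, and one has considerable freedom in the telescoping of the Bratteli diagram for $A$ to engineer the dimension group of the corner to be $(K_0(B), K_0(B)^+, [1_B])$. This is carried out by an inductive intertwining argument, alternating steps that extend the corner's dimension group with steps that place new vertices to maintain the KMS-bundle and $\KMS_{\pm\infty}$ data.

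The hard part will be doing (i), (ii) and (iii) simultaneously in the inductive construction: extra vertices added at stage $n$ to advance the $\KMS_{\pm\infty}$ approximation must be inserted so that (a) no new eigenvectors of $\alpha_*$ appear at finite real eigenvalues beyond those prescribed by $(S,\pi)$, (b) the telescoping required to keep the corner's dimension group on track with $K_0(B)$ remains consistent with both the KMS-bundle condition and the asymptotic-clustering condition, and (c) the connectedness restrictions of Lemma \ref{05-08-23} and Corollary \ref{06-08-23a} are exhausted and not exceeded. The technical heart of the proof will therefore be a careful bookkeeping of a three-level intertwining, which is a refinement of the bookkeeping of \cite{ET} with a new layer dedicated to the Stone-\v{C}ech boundary data.
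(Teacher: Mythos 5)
Your outline reproduces the paper's announced architecture (corner of a crossed product by an automorphism of an AF-algebra, dual flow, simultaneous control of the bundle, the boundary data and the Elliott invariant), but the technical heart is missing, and the two mechanisms you do propose would not work as described. Making $\KMS_\infty(\hat{\alpha}^e)$ \emph{homeomorphic} to $D_+$ is not a surjectivity problem: the cluster set of $\beta_n$-KMS states with $\beta_n \to \infty$ is naturally a quotient of the non-metrizable Stone-\v{C}ech boundary $\partial S_+$ (Lemma \ref{05-08-23}), so one must (i) embed a copy of $C_\mathbb R(D_+)$ into the ordered $K_0$-data that determines the states, so that distinct points of $D_+$ are separated by limits along $K_0(eAe)$, and (ii) ensure no extra cluster points survive. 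The paper does this by first producing continuous surjections $t_\pm : \partial \pi(S_\pm) \to D_\pm$ (Lemma \ref{08-08-23}) --- where the connected case is settled not by any ``merging'' but by the Aarts--van Emde Boas theorem \cite{AB} that every metrizable continuum is a continuous image of $\partial [0,\infty)$ --- and then building the coefficient groups $A(\pm) = \mathcal C + E_\pm$, with $E_\pm$ carrying lifts $l_\pm$ of $r_\pm^*(C_\mathbb R(D_\pm))$, and ordering the Laurent-polynomial groups $G(\pm)$ by strict positivity of the leading boundary term $R_\pm(e^{-\mathbb L_\pm(p)\pi}p)$; the lexicographic-order Riesz-group lemmas (Lemmas \ref{11-01-23f}--\ref{skovshoved9}) make this a dimension group, and Observation \ref{dragoer10} together with \eqref{dragoer12} then pins each $\KMS_\infty$ state to a unique point of $D_+$, with injectivity coming from density of $C_+$ and the fact that traces on the AF-algebra $eAe$ are determined by $K_0$. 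Your ``tree of summands parametrised by the Cantor set'' yields at best a surjection of some cluster set onto $D_+$, with no control of injectivity or of extraneous cluster points, and in the regime $\pi(S) \supseteq [r,\infty)$ no mechanism at all.

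The classification step is also wrong as stated: ``by Elliott's classification it suffices to match the pointed ordered dimension group'' is not available, because the corner $e(A\rtimes_\alpha \mathbb Z)e$ is a crossed-product corner and is not known to be AF. The paper must (a) choose the lift $\alpha$ of $\rho$ using Lemma 3.4 of \cite{Th3} so that every trace on $A \rtimes_\alpha \mathbb Z$ restricts from an $\alpha$-invariant trace on $A$ --- without this the corner could have strictly more trace states than $B$ --- and so that the crossed product is $\mathcal Z$-stable; (b) compute $K_0(C) \cong K_0(B)$ with its order via Pimsner--Voiculescu and the summation map $\Sigma$ (Lemmas \ref{01-09-21k}--\ref{02-09-21c}), and $K_1(C) = 0$; and (c) invoke the modern classification theorem \cite{CGSTW} for simple unital nuclear $\mathcal Z$-stable algebras in the UCT class. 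Similarly, your requirement that ``no new eigenvectors of $\alpha_*$ appear at finite eigenvalues'' is not arranged by diagram bookkeeping in the paper; it is a theorem (Lemma \ref{27-08-21x}, proved with Riesz representation and Hahn--Banach) that every normalized positive eigenfunctional of $\rho$ on $\Gamma$ is evaluation at a point of the appropriate fibre, and it is this that fixes the KMS bundle to be exactly $(S,\pi)$. Your plan presupposes all of this control without supplying any of it.
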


 By Corollary \ref{06-08-23a} it is necessary to assume that $D_+$ is connected when $\pi(S)$ contains an interval of the form $[r,\infty)$, and the condition will be used shortly in Lemma \ref{08-08-23}, but only once in the proof of Theorem \ref{main}. When $\pi(S)$ does not contain an interval of the indicated form, $D_+$ can be any non-empty compact metric space. Similar remarks apply to $D_-$.

The proof is long and will be divided into smaller pieces. For the rest of this section, assume that we are in the setting of Theorem \ref{main}.

\subsection{Preparations}

\begin{lemma}\label{08-08-23} There are continuous surjections $t_\pm : \partial \pi(S_\pm) \to D_\pm$.
\end{lemma}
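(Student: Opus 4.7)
The plan is to construct, for each sign $\pm$, a continuous map $f_\pm : \pi(S_\pm) \to Y_\pm$ into a compact metric space $Y_\pm \supseteq D_\pm$ whose Stone--\v{C}ech extension $\beta f_\pm : \beta \pi(S_\pm) \to Y_\pm$ sends $\partial \pi(S_\pm)$ exactly onto $D_\pm$; co-restricting to $D_\pm$ then yields $t_\pm$. The general fact underpinning everything is that for any continuous $f : X \to Y$ from a locally compact $\sigma$-compact Hausdorff space into a compact Hausdorff space,
\[
(\beta f)(\partial X) \; = \; \bigcap_{K \Subset X} \overline{f(X \setminus K)},
\]
so it suffices to arrange that $f_\pm$ is \emph{asymptotically $D_\pm$-surjective}: for every $d \in D_\pm$ and every $\epsilon > 0$ the preimage $f_\pm^{-1}(B_\epsilon(d))$ is unbounded in $\pi(S_\pm)$, and $f_\pm(\pi(S_\pm) \setminus K)$ stays in an $\epsilon$-neighbourhood of $D_\pm$ once $K$ is large enough.

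I would split on whether $\pi(S)$ contains a ray $[r,\infty)$. When it does not --- so $D_+$ may be any non-empty compact metric space --- I choose a sequence $m_1 < m_2 < \cdots$ in $[0,\infty) \setminus \pi(S_+)$ with $m_n \to \infty$; the blocks $B_n := \pi(S_+) \cap (m_{n-1}, m_n)$ are then compact and clopen in $\pi(S_+)$ (their endpoints sit in gaps), and infinitely many are non-empty because $\pi(S_+)$ is unbounded. Fixing a dense sequence $\{d_k\}$ in $D_+$ and a surjection $\sigma : \mathbb N \to \mathbb N$ all of whose fibres are infinite, I set $f_+ \equiv d_{\sigma(n)}$ on each non-empty $B_n$. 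This is continuous because the $B_n$ are clopen, takes values in $D_+$, and is asymptotically $D_+$-surjective since each $d_k$ is attained on infinitely many blocks.

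When $[r,\infty) \subseteq \pi(S)$ the standing hypothesis of the theorem forces $D_+$ to be a compact metric continuum. I embed $D_+$ into the Hilbert cube $Y_+ := [0,1]^{\mathbb N}$. The key input is that every compact metric continuum can be written as a decreasing intersection $D_+ = \bigcap_n P_n$ of Peano continua $P_n \subseteq Y_+$ (e.g.\ via shrinking polyhedral neighbourhoods of $D_+$). By the strong form of Hahn--Mazurkiewicz I choose continuous surjections $g_n : [r+n-1, r+n] \to P_n$ sending both endpoints to a fixed basepoint $p \in D_+$; concatenating produces a continuous $f_+ : [r, \infty) \to Y_+$ with $f_+([r+n-1, r+n]) = P_n$, which I extend over the compact set $\pi(S_+) \cap [0,r)$ by any continuous map into $P_1$ (Tietze). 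The displayed formula then gives $(\beta f_+)(\partial \pi(S_+)) = \bigcap_n P_n = D_+$ exactly. The construction for $t_-$ is identical after reflecting $\pi$.

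The step I expect to be the main obstacle is producing the Peano decomposition $D_+ = \bigcap_n P_n$: continuous images of $[0,1]$ are always locally connected, so no direct continuous surjection of a ray onto a non-Peano continuum (e.g.\ a topologist's sine curve) can exist, and the ``asymptotic'' information in the Stone--\v{C}ech remainder is doing essential work. If a self-contained verification of the decomposition turns out to be too costly I would instead invoke Bellamy's theorem that the remainder $\beta[0,\infty) \setminus [0,\infty)$ surjects continuously onto every metric continuum, combined with the routine observation that $\partial \pi(S_+)$ is naturally homeomorphic to $\partial[r,\infty)$ because $\pi(S_+) \cap [0,r]$ is compact and so does not contribute to the Stone--\v{C}ech remainder.
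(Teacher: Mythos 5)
Your proposal is correct in substance, and in the case where $\pi(S)$ has no ray $[r,\infty)$ it is essentially the paper's argument in different clothing: the paper also picks a sequence of gaps $r_n \notin \pi(S)$ tending to infinity, but instead of assigning dense values $d_{\sigma(n)}$ to the clopen blocks directly, it defines a locally constant $\kappa_+ : \pi(S_+) \to \mathbb N$, observes that it induces a continuous surjection $\partial \pi(S_+) \to \partial \mathbb N$, and composes with a surjection $\partial \mathbb N \to D_+$ coming from separability of $D_+$ --- which is exactly your dense-sequence-with-infinite-fibres map in disguise. The genuine divergence is in the ray case: the paper simply cites the theorem of Aarts and van Emde Boas (its reference [AB], see also Theorem 3 of [DH]) that every metric continuum is a continuous image of $\partial[0,\infty)$, together with the same observation you make that the compact piece $\pi(S_+) \cap [0,r]$ does not affect the remainder, so $\partial \pi(S_+) = \partial[r,\infty) = \partial[0,\infty)$. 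You instead inline a proof of that theorem: write $D_+ = \bigcap_n P_n$ as a decreasing intersection of Peano continua in the Hilbert cube and concatenate based Hahn--Mazurkiewicz surjections $g_n : [r+n-1,r+n] \to P_n$, then apply the identity $(\beta f)(\partial X) = \bigcap_K \overline{f(X \setminus K)}$ (valid here since $\pi(S_+)$ is closed in $[0,\infty)$, hence locally compact and $\sigma$-compact, and the target is compact metric). That identity is also the cleanest way to unify your two cases, which the paper's composition-of-surjections argument does not do. The Peano decomposition you flag as the main obstacle is in fact a true classical fact --- cover $D_+$ by finitely many basic closed boxes inside a shrinking neighbourhood, getting a compact polyhedron, and take the connected component containing $D_+$, which is a Peano continuum --- so your direct route closes; and your fallback closes it in any case, except that the surjection theorem you invoke is due to Aarts--van Emde Boas rather than Bellamy (Bellamy proved the remainder of the half-line is an indecomposable continuum, a different statement). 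Two small repairs: in the gap case you should apply $\sigma$ to an enumeration of the \emph{non-empty} blocks, since infinite fibres of $\sigma$ over $\mathbb N$ do not by themselves guarantee that each $d_k$ is attained on infinitely many non-empty $B_n$; and the Tietze extension over $\pi(S_+) \cap [0,r)$ is unnecessary --- since $g_1$ sends the endpoint $r$ to the basepoint $p$, you may set $f_+ \equiv p$ on the compact low part and paste, as the behaviour on any compact set is invisible to the remainder anyway.
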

\begin{proof} To construct $t_+$ assume first that $\pi(S)$ contains an interval of the form $[r,\infty)$. In this case $D_+$ is a compact continuum by assumption and hence the image of $\partial [0,\infty)$ under a continuous surjection by a result of Aarts and Van Emde Boas, \cite{AB}. (See also Theorem 3 of \cite{DH}.) This gives the desired conclusion in this case because $\partial [0,\infty) = \partial [r,\infty) =  \partial \pi(S_+)$. Assume that $\pi(S)$ does not contain an interval of the form $[r,\infty)$.  There is then an increasing sequence $\{r_n\}$ in $[0,\infty)$ such that $\lim_{n \to \infty} r_n = \infty$, $r_n\notin \pi(S)$ and $[r_{n},r_{n+1}] \cap  \pi(S) \neq \emptyset$ for all $n$. Let $\kappa_+ : \pi(S_+) \to \mathbb N$ be a continuous function such that
$$
\kappa_+(x) = n, \ x \in [r_n,r_{n+1}] .
$$
Since $\kappa_+$ is continuous and surjective it induces a continuous surjection $\partial \pi(S_+) \to \partial \mathbb N$. Since $D_+$ is compact and separable there is a continuous surjection $\partial \mathbb N \to D_+$. The composition $\partial \pi(S_+) \to \partial \mathbb N \to D_+$ is a continuous surjection $t_+ : \partial \pi(S_+) \to D_+$. The construction of $t_-$ is identical.
\end{proof}

Note that $\pi : S \to \mathbb R$ restricts to continuous maps $\pi_\pm : S_\pm \to \pi(S_\pm)$ which induce continuous surjections $\hat{\pi}_\pm : \partial S_+ \to \partial \pi(S_\pm)$. Set 
$$
r_\pm := t_\pm \circ \hat{\pi}_\pm : \ \partial S_\pm \to D_\pm.
$$
Given a topological space $X$ we denote in the following by $C_\mathbb R(X)$ the space of continuous real-valued functions on $X$. We can then consider the maps
$r_\pm^* : C_\mathbb R(D_\pm) \to C_\mathbb R(\partial S_\pm)$ dual to $r_\pm$; i.e. $r_\pm^*(f) = f \circ r_\pm$. Given a locally compact Hausdorff space we denote by $C_{b,\mathbb R}(X)$ the set of continuous bounded real-valued functions on $X$. Let $\mathcal A_b(S,\pi)$ denote the set of bounded functions from $\mathcal A(S,\pi)$; i.e. 
$$
\mathcal A_b(S,\pi) := C_{b,\mathbb R}(S) \cap \mathcal A(S,\pi).
$$
By definition of the  Stone-\v{C}ech boundary there are canonical continuous and surjective maps $C_{b,\mathbb R}(S_\pm) \to C_\mathbb R(\partial S_\pm)$. Composed with the obvious maps $C_{b,\mathbb R}(S) \to C_{b,\mathbb R}(S_\pm)$ we get linear maps $C_{b,\mathbb R}(S) \to  C_\mathbb R(\partial S_\pm)$. By restriction this gives us linear maps  
$$
R_{\pm} : \mathcal A_b(S,\pi) \to C_\mathbb R(\partial S_{\pm}).
$$ 

\begin{lemma}\label{08-08-23a}
\begin{equation*}\label{23-06-23}
r^*_\pm(C_\mathbb R(D_\pm)) \subseteq R_\pm(\mathcal A_b(S,\pi)) 
\end{equation*}
\end{lemma}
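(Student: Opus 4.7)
Given $f \in C_\mathbb R(D_+)$ my plan is to build a $g \in \mathcal A_b(S,\pi)$ explicitly with $R_+(g) = r_+^*(f)$; the case of $R_-$ is symmetric.

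First I would consider $\tilde f := f \circ t_+$, a continuous function on the closed subset $\partial \pi(S_+)$ of the compact Hausdorff space $\beta \pi(S_+)$. By Tietze's theorem $\tilde f$ extends to a continuous function $h^\beta$ on all of $\beta \pi(S_+)$, which, under the identification $C(\beta \pi(S_+)) = C_{b,\mathbb R}(\pi(S_+))$, corresponds to some $h \in C_{b,\mathbb R}(\pi(S_+))$. Since $\pi^{-1}(0)$ is nonempty by hypothesis, $0 \in \pi(S_+)$ and $h(0)$ is well defined. I would then extend $h \circ \pi$ off of $S_+$ by putting
$$g(x) := \begin{cases} h(\pi(x)), & \pi(x) \geq 0, \\ h(0), & \pi(x) \leq 0. \end{cases}$$
The two prescriptions agree when $\pi(x)=0$, so $g$ is continuous, bounded by $\|h\|_\infty$, and constant along each fiber $\pi^{-1}(t)$ (hence in particular affine on each fiber). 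Thus $g \in \mathcal A_b(S,\pi)$.

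It then remains to verify $R_+(g) = r_+^*(f)$. The restriction $g|_{S_+}$ equals $h \circ \pi_+$, so its Stone-\v{C}ech extension to $\beta S_+$ is $h^\beta \circ \beta\pi_+$, where $\beta\pi_+ : \beta S_+ \to \beta \pi(S_+)$ is the continuous extension of $\pi_+$ furnished by the universal property. Because $\pi$ is proper, so is $\pi_+$, and this forces $\beta\pi_+(\partial S_+) \subseteq \partial \pi(S_+)$: if some $\xi \in \partial S_+$ had $\beta\pi_+(\xi) = y \in \pi(S_+)$, a net in $S_+$ converging to $\xi$ would eventually lie in the compact set $\pi_+^{-1}(K)$ for any compact neighborhood $K$ of $y$, producing a cluster point of the net in $S_+$ and contradicting $\xi \notin S_+$. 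By the definition of $\hat\pi_+$ this restriction coincides with $\hat\pi_+$, so
$$R_+(g) = h^\beta\bigl|_{\partial \pi(S_+)} \circ \hat\pi_+ = \tilde f \circ \hat\pi_+ = f \circ t_+ \circ \hat\pi_+ = f \circ r_+ = r_+^*(f),$$
as desired.

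The one step requiring actual work beyond diagram-chasing is the inclusion $\beta\pi_+(\partial S_+) \subseteq \partial \pi(S_+)$, which uses properness of $\pi$ in an essential way; everything else is a straightforward Tietze extension followed by tracing through the definitions of $\hat\pi_\pm$, $r_\pm$, and $R_\pm$.
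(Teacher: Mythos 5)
Your proof is correct and follows essentially the same route as the paper: you lift $f \circ t_\pm$ through the canonical surjection $C_{b,\mathbb R}(\pi(S_\pm)) \to C_\mathbb R(\partial \pi(S_\pm))$ (your Tietze extension) and pull back along $\pi_\pm$, using the identity $R_\pm \circ \pi_\pm^* = \hat\pi_\pm^* \circ q_\pm$, which is exactly the commuting square in the paper's proof. The only difference is that you spell out two points the paper leaves implicit — the extension of $g \circ \pi_+$ to all of $S$ by the constant $h(0)$ on $S_-$, and the properness argument showing $\beta\pi_+(\partial S_+) \subseteq \partial \pi(S_+)$, which underlies the paper's definition of $\hat\pi_\pm$ — both of which you handle correctly.
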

\begin{proof} Let $q_\pm : C_{b,\mathbb R}(\pi(S_\pm)) \to C_\mathbb R(\partial \pi(S_\pm))$ be the canonical surjections. Then
\begin{equation*}
\begin{xymatrix}{
C_{b,\mathbb R}(\pi(S_\pm)) \ar[d]^{q_\pm} \ar[r]^-{\pi_\pm^*} & \mathcal A_b(S_\pm,\pi) \ar[d]^{R_\pm}\\
C_\mathbb R(\partial \pi(S_\pm)) \ar[r]^-{\hat{\pi}_\pm^*} & C_\mathbb R(\partial S_\pm) }
\end{xymatrix}
\end{equation*} 
commutes. Let $f \in C_\mathbb R(D_\pm)$. Then $r^*_\pm (f) = f \circ  t_\pm \circ \hat{\pi}_\pm$ and $f \circ  t_\pm \in C_\mathbb R(\partial \pi(S_\pm))$. Since $q_\pm$ is surjective there is an element $g \in C_{b,\mathbb R}(\pi(S_\pm))$ such that $q_\pm(g) =f \circ  t_\pm$ and then
$r^*_\pm(f) = \hat{\pi}_\pm^*(f \circ t_\pm) = R_\pm(g \circ \pi_\pm)$.
\end{proof}

\subsection{Construction of a dimension group}\label{construction}

By Lemma \ref{08-08-23a} we can choose linear maps 
$$
l_\pm : r^*_\pm(C_\mathbb R (D_\pm)) \to \mathcal A_b(S,\pi)
$$
such that $l_\pm(1) = 1$ and $l_\pm$ is a right-inverse for $R_\pm$, i.e.
$$
R_\pm \circ l_\pm(g) = g \ \ \ \forall g\in r^*_\pm(C_\mathbb R (D_\pm))  .
$$
For each $k \in \mathbb N$, choose continuous functions $\psi^0_k, \psi^\pm_k : \mathbb R \to [0,1]$ such that

\begin{itemize}
\item[(A)] $\psi^0_k(t) = 1, \ -\frac{1}{2k} \leq t \leq \frac{1}{2k}$, 
\item[(B)] $\psi^-_k(t) = 1, \ t \leq - \frac{1}{k}$,
\item[(C)] $\psi^+_k(t) = 1, \ t \geq \frac{1}{k}$, and
\item[(D)] $\psi_k^-(t) + \psi_k^0(t) + \psi_k^+(t) = 1$ for all $t \in \mathbb R$. 
\end{itemize}
For each $n \in \mathbb Z$ we fix also a continuous function $\varphi_n : \mathbb R \to [0,1]$ such that $\varphi_n(x) = 1$ for $x \leq n$ and $\varphi_n(x) = 0$ for $x \geq n+1$.


In what follows we denote the support of a real-valued function $f$ by $\supp f$, and we denote by $\mathcal A_c(S,\pi)$ the set of elements $f$ of $\mathcal A(S,\pi)$ with $\supp f$ compact.

\begin{lemma}\label{helsingor} There are countable additive subgroups $C_\pm \subseteq r^*_\pm(C_\mathbb R(D_\pm))$ and $\mathcal C \subseteq \mathcal A_c(S,\pi)$ such that
\begin{itemize}
\item[(a)] $1 \in C_\pm$.
\item[(b)] $C_\pm$ is dense in $r^*_\pm(C_\mathbb R(D_\pm))$.
\item[(c)] $q C_\pm \subseteq C_\pm$ and $q\mathcal C \subseteq \mathcal C$ for all rational numbers $q\in \mathbb Q$.
\item[(d)] For every $a \in  \mathcal A_c(S,\pi)$ and $\epsilon >0$ there is an $a' \in \mathcal C$ such that 
$$
\sup_{s \in S} \left|a(s)-a'(s)\right| \leq \epsilon.
$$
\item[(e)] $\varphi_n \circ \pi\mathcal C \subseteq \mathcal C $ for all $n \in \mathbb Z$.
\item[(f)] $\psi^\pm_k \circ \pi - \psi^\pm_1 \circ \pi \in \mathcal C$ for all $k \in \mathbb N$.  
\end{itemize}
\end{lemma}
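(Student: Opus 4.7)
The plan is a standard Löwenheim--Skolem-style countable closure argument, carried out separately for $C_\pm$ and for $\mathcal C$. For $C_\pm$, since $D_\pm$ is a compact metric space the Banach space $C_\mathbb R(D_\pm)$ is separable, and the pullback $r_\pm^*$ is an isometric embedding (because $r_\pm$ is surjective), so $r_\pm^*(C_\mathbb R (D_\pm))$ is sup-norm separable. I would pick a countable dense subset and let $C_\pm$ be the $\mathbb Q$-linear span of this subset together with the constant function $1 = r_\pm^*(1)$. Then $C_\pm$ is countable, contains $1$, is dense in $r_\pm^*(C_\mathbb R(D_\pm))$, and is closed under multiplication by rationals, establishing (a), (b), (c) for $C_\pm$.

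For $\mathcal C$, I would first check that each $\psi^\pm_k\circ \pi - \psi^\pm_1\circ \pi$ lies in $\mathcal A_c(S,\pi)$: it is constant (hence affine) on each fiber of $\pi$, and its support is contained in $\pi^{-1}([-1,1])$, which is compact by properness of $\pi$. Indeed, on $\{t\geq 1\}$ both $\psi^+_k$ and $\psi^+_1$ equal $1$ by (C), and on $\{t\leq -1\}$ properties (B) and (D) force both to equal $0$; the $\psi^-$ case is symmetric. Next, for each $N\in\mathbb N$ the subspace $\mathcal A_{c,N} := \{a\in \mathcal A(S,\pi) : \supp a \subseteq \pi^{-1}([-N,N])\}$ embeds isometrically into $C(\pi^{-1}([-N,N]))$ by restriction, and since $\pi^{-1}([-N,N])$ is compact metric this target is sup-norm separable; hence so is $\mathcal A_{c,N}$. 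Choose a countable sup-norm dense subset $\mathcal D_N \subseteq \mathcal A_{c,N}$ for each $N$ and put
\[
F_0 := \bigcup_{N \in \mathbb N} \mathcal D_N \ \cup \ \bigl\{\psi^\pm_k\circ \pi - \psi^\pm_1\circ \pi : k\in\mathbb N\bigr\}.
\]
Recursively define $F_{m+1}$ to be the set obtained from $F_m$ by adjoining all products $(\varphi_n\circ \pi)\cdot f$ with $n\in\mathbb Z$ and $f\in F_m$, and then closing under $\mathbb Q$-linear combinations; set $\mathcal C := \bigcup_{m\geq 0} F_m$.

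Countability is preserved at every step, so $\mathcal C$ is countable, and (c), (e), (f) hold by construction. Property (d) is immediate from the density of $\mathcal D_N$ in $\mathcal A_{c,N}$, since any $a\in \mathcal A_c(S,\pi)$ lies in some $\mathcal A_{c,N}$ and can therefore be sup-norm approximated by elements of $\mathcal D_N\subseteq \mathcal C$. The one step where I expect to need a little care is verifying that the closure stays inside $\mathcal A_c(S,\pi)$: this reduces to the fact that multiplication by $\varphi_n\circ \pi$ preserves $\mathcal A_c(S,\pi)$, which holds because $\varphi_n\circ \pi$ is constant on each fiber (so affinity on fibers is preserved) and because the support of the product is contained in $\supp f$. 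No serious obstacle is expected; the whole argument is a routine countable closure.
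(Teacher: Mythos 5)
Your proposal is correct and is exactly the routine countable-closure argument the paper intends: the paper's own proof consists of the single remark that the lemma ``follows from the separability of $C_\mathbb R(D_\pm)$ and the second countability of the topology of $S$,'' with the details left to the reader. Your verification that $\psi^\pm_k\circ\pi-\psi^\pm_1\circ\pi\in\mathcal A_c(S,\pi)$, the fiberwise-constancy argument showing multiplication by $\varphi_n\circ\pi$ preserves $\mathcal A_c(S,\pi)$, and the recursive closure under $\mathbb Q$-scalars supply precisely those omitted details.
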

\begin{proof} This follows from the separability of $C_\mathbb R(D_\pm)$ and the second countability of the topology of $S$. We leave the details to the reader.
\end{proof}

Set
$$
E_\pm := \psi^\pm_1\circ \pi l_\pm(C_\pm) .
$$
and
$$
A(\pm) :=  \mathcal C + E_\pm.
$$

\begin{lemma}\label{helsingor1} Let $a \in A(\pm)$. Then $a \in \mathcal C$ if and only if $R_\pm(a) = 0$.
\end{lemma}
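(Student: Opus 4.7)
The plan is to prove the biconditional using the standard fact that for a locally compact Hausdorff space $Y$ the canonical surjection $C_{b,\mathbb R}(Y) \to C_\mathbb R(\partial Y)$ has kernel $C_0(Y)$; in particular it annihilates any compactly supported continuous function. Both directions will fall out of this fact together with the right-inverse property $R_\pm \circ l_\pm = \id$ on $C_\pm$.

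For the ``only if'' direction I would simply note that any $a \in \mathcal C \subseteq \mathcal A_c(S,\pi)$ has compact support in $S$, so its restriction to $S_\pm$ has compact support in $S_\pm$ and is therefore sent to $0$ by the canonical map $C_{b,\mathbb R}(S_\pm) \to C_\mathbb R(\partial S_\pm)$. This gives $R_\pm(a) = 0$.

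For the converse I would pick any decomposition $a = c + \psi^\pm_1 \circ \pi \cdot l_\pm(g)$ with $c \in \mathcal C$ and $g \in C_\pm$ and compute $R_\pm(a)$. The first summand contributes $0$ by the previous paragraph. For the second summand, the key observation is that by properties (B) and (C) of the $\psi^\pm_k$, the function $1 - \psi^\pm_1$ vanishes on $[1,\infty)$ respectively on $(-\infty,-1]$, so $(1 - \psi^\pm_1)\circ \pi$ restricted to $S_\pm$ is supported inside $\pi^{-1}([-1,1]) \cap S_\pm$, which is compact because $\pi$ is proper. Multiplying by the bounded function $l_\pm(g)$ preserves this compact support on $S_\pm$, so $\psi^\pm_1 \circ \pi \cdot l_\pm(g)$ and $l_\pm(g)$ differ on $S_\pm$ by a function in $C_0(S_\pm)$ and thus have the same image in $C_\mathbb R(\partial S_\pm)$. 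Using $R_\pm \circ l_\pm = \id$ on $C_\pm$, this gives $R_\pm(\psi^\pm_1 \circ \pi \cdot l_\pm(g)) = g$. Hence $R_\pm(a) = g$, so the hypothesis $R_\pm(a) = 0$ forces $g = 0$, and then $a = c \in \mathcal C$.

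I do not anticipate any real obstacle; the only small point to check along the way is that $\psi^\pm_1 \circ \pi \cdot l_\pm(g)$ really does lie in $\mathcal A_b(S,\pi)$ so that $R_\pm$ applies to it. This is immediate because $\psi^\pm_1 \circ \pi$ is constant on each fiber $\pi^{-1}(t)$, so multiplying the bounded, fiberwise-affine function $l_\pm(g)$ by it preserves both boundedness and fiberwise-affineness. As a side benefit, the same computation shows that the decomposition of $a \in A(\pm)$ as $c + \psi^\pm_1 \circ \pi \cdot l_\pm(g)$ is actually unique, since $g$ is determined by $R_\pm(a)$ and $c$ is then the difference.
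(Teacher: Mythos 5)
Your proposal is correct and follows essentially the same route as the paper: the paper's proof also writes $a = c + \psi^\pm_1\circ\pi\, l_\pm(b_\pm)$ with $c \in \mathcal C$, $b_\pm \in C_\pm$, and asserts $R_\pm(a) = b_\pm$, which is exactly the computation you carry out. You merely make explicit the two facts the paper leaves tacit --- that $R_\pm$ annihilates compactly supported functions (so $R_\pm(c)=0$ and $\psi^\pm_1\circ\pi\, l_\pm(g)$ has the same boundary image as $l_\pm(g)$, by properness of $\pi$) and that $R_\pm \circ l_\pm = \id$ on $C_\pm$ --- and you note the resulting uniqueness of the decomposition, which is a harmless bonus.
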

\begin{proof} Write $a =  c + \psi_1^{\pm}\circ \pi l_\pm(b_\pm)$, where $c \in \mathcal C$ and $b_\pm \in C_\pm$. Then $R_\pm(a) = b_\pm$. Thus, if $R_\pm(a) = 0$, $a = c \in \mathcal C$. The converse is trivial.
\end{proof}


Let $G(\pm)$ be the set of Laurent polynomials in $e^\pi$ with $A(\pm)$-coefficients. That is, $G(\pm)$ consists of the elements $p \in \mathcal A(S,\pi)$ of the form
\begin{equation}\label{holbaek3}
p(s) = \sum_{n \in \mathbb Z} p_n(s) e^{n\pi(s)} ,
\end{equation}
where $p_n \in A(\pm)$ for all $n$ and $p_n =0 $ for all but finitely many $n$. Note that we are here defining two different subgroups of $\mathcal A(S,\pi)$, namely $G(+)$ which is the set of Laurent polynomials in $e^\pi$ with $A(+)$-coefficients and $G(-)$ which is the set of Laurent polynomials in $e^\pi$ with $A(-)$-coefficients. As we have done already, we use the notation $\pm$ in the same spirit throughout. Note that the support of an element of $G(+)$ is bounded below while the support of an element of $G(-)$ is bounded above.

\begin{lemma}\label{roervigx} Let $p \in G(\pm)$. Assume that $p \notin \mathcal A_c(S,\pi)$. There is a unique integer $N_\pm(p) \in \mathbb Z$ such that $e^{-N_\pm(p)\pi}p \in \mathcal A_b(S,\pi)$ and 
$$
R_\pm(e^{-N_\pm(p)\pi}p) \neq 0.
$$ 

\smallskip

Write $p = \sum_{n \in \mathbb Z} p_ne^{n\pi}$, where $p_n \in A(\pm)$ for all $n$ and $p_n =0 $ for all but finitely many $n$. Then
\begin{equation}\label{18-07-23}
R_\pm(e^{-N_\pm(p)\pi}p) = R_\pm(p_{N_\pm(p)}).
\end{equation}
Furthermore, in the plus case
 \begin{align*}
& N_+(p)= \max \{n \in \mathbb Z: \ R_+(p_n) \neq 0 \}  = \max \{n \in \mathbb Z: \ p_n \notin \mathcal C\},
\end{align*}
and in the minus case
\begin{align*}
& N_-(p)= \min\{n \in \mathbb Z: \ R_-(p_n) \neq 0 \} = \min \{n \in \mathbb Z: \ p_n \notin \mathcal C\}.
\end{align*}

\end{lemma}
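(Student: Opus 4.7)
The plan is to take $N_+(p)$ to be the largest index $n$ for which the coefficient $p_n$ escapes $\mathcal C$, and then to read off the remaining assertions by exploiting that elements of $\mathcal C$ have compact support while elements of $A(\pm) = \mathcal C + E_\pm$ are bounded with $\pi$-support bounded below in the $+$ case (respectively above in the $-$ case). The minus case is completely symmetric, so I focus on the plus case.

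First I would check that $\{n \in \mathbb Z : p_n \notin \mathcal C\}$ is nonempty and finite. Finiteness is automatic since only finitely many $p_n$ are nonzero. Nonemptiness follows from $p \notin \mathcal A_c(S,\pi)$: were every $p_n$ in $\mathcal C \subseteq \mathcal A_c(S,\pi)$, then each $p_n e^{n\pi}$ would have support inside $\supp p_n$ and hence be compactly supported, forcing $p \in \mathcal A_c(S,\pi)$. Set $N_+(p) := \max\{n : p_n \notin \mathcal C\}$, which by Lemma~\ref{helsingor1} equals $\max\{n : R_+(p_n) \neq 0\}$, proving the two equivalent descriptions at the end of the statement.

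Next I would establish that $q := e^{-N_+(p)\pi}p$ lies in $\mathcal A_b(S,\pi)$ and compute $R_+(q)$, term by term. For $n > N_+(p)$ the coefficient $p_n$ is in $\mathcal C$, so $p_n e^{(n-N_+(p))\pi}$ has compact support; this contributes a bounded function whose restriction to $S_+$ is in $C_0(S_+)$, hence contributes $0$ to $R_+$. For $n = N_+(p)$ the term is $p_{N_+(p)} \in A(+)$, which is bounded since both $\mathcal C$ and $E_+ = \psi_1^+\circ\pi\cdot l_+(C_+)$ consist of bounded functions. For $n < N_+(p)$ set $c := n - N_+(p) < 0$; every element of $A(+)$ has $\pi$-support bounded below (true for $\mathcal C$ by compactness and for $E_+$ because $\psi_1^+\circ\pi$ is supported in $\pi^{-1}((-1,\infty))$), so $e^{c\pi}$ is bounded on $\supp p_n$, making the term bounded on $S$, and on $S_+$ the factor $e^{c\pi}$ decays to $0$ as $\pi\to\infty$ while $p_n$ is bounded, so again the restriction is in $C_0(S_+)$ and the $R_+$ image vanishes. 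Summing the finitely many terms yields $q \in \mathcal A_b(S,\pi)$ together with $R_+(q) = R_+(p_{N_+(p)})$, which is nonzero by Lemma~\ref{helsingor1}. This proves \eqref{18-07-23} and existence of $N_+(p)$.

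For uniqueness, suppose $M \in \mathbb Z$ also satisfies $e^{-M\pi}p \in \mathcal A_b(S,\pi)$ with $R_+(e^{-M\pi}p) \neq 0$, and write $e^{-M\pi}p = e^{(N_+(p)-M)\pi}q$. If $M > N_+(p)$ the prefactor decays to $0$ as $\pi\to\infty$ on $S_+$ (by properness of $\pi$, leaving every compact subset of $S_+$ forces $\pi\to\infty$), so $e^{-M\pi}p|_{S_+} \in C_0(S_+)$ and its $R_+$ image vanishes, a contradiction. If $M < N_+(p)$, pick $\xi\in\partial S_+$ with $R_+(q)(\xi)\neq 0$ and a net $(s_\alpha)$ in $S_+$ converging to $\xi$ in $\beta S_+$; again properness forces $\pi(s_\alpha)\to\infty$, so $e^{(N_+(p)-M)\pi(s_\alpha)}\to\infty$ while $|q(s_\alpha)|\to |R_+(q)(\xi)| > 0$, contradicting boundedness of $e^{-M\pi}p$. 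The main delicacy throughout is the interaction between the exponential growth or decay of $e^{c\pi}$ on the two halves $S_\pm$ and the asymmetric support constraints built into $A(+)$ and $E_+$; once these are tracked carefully the rest is routine bookkeeping, and the minus case is obtained by flipping signs and replacing $\max$ by $\min$.
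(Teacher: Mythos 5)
Your proof is correct and takes essentially the same approach as the paper: define $N_+(p)$ as the largest index with $p_n \notin \mathcal C$ (equivalently $R_+(p_n) \neq 0$, via Lemma \ref{helsingor1}), verify $e^{-N_+(p)\pi}p \in \mathcal A_b(S,\pi)$ and the identity \eqref{18-07-23} by a term-by-term support/decay analysis, and rule out other exponents by decay for larger ones and unbounded growth along points with $\pi(s) \to \infty$ for smaller ones. The only cosmetic difference is in the uniqueness step for $k < N_+(p)$, where you use a net in $S_+$ converging to a boundary point at which $R_+(e^{-N_+(p)\pi}p)$ is nonzero, while the paper uses a sequence $\{s_n\}$ with $|p_{N_+(p)}(s_n)| \geq \delta$ and an explicit $\tfrac{\delta}{2}$ estimate; these are the same idea.
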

 \begin{proof} We give the details in the plus case. It follows from Lemma \ref{helsingor1} that $\{n \in \mathbb Z: \ R_+(p_n) \neq 0 \}$ is not empty since $p \notin \mathcal A_c(S,\pi)$. Set 
$$
N_+(p) := \max \{n \in \mathbb Z: \ R_+(p_n) \neq 0 \}.
$$ 
It follows from Lemma \ref{helsingor1} that $N_+(p) =\max \{n \in \mathbb Z: \ p_n \notin \mathcal C\}$. 
Then $e^{-N_+(p)\pi}p  =  h + p_{N_+(p)} + \sum_{j=1}^\infty p_{N_+(p)-j}e^{-j \pi}$ for some $h \in \mathcal A_c(S,\pi)$, showing first that $e^{-N_+(p)\pi}p \in \mathcal A_b(S,\pi)$ and then that 
$$
R_+(e^{-N_+(p)\pi}p) = R_+(p_{N_+(p)}) \neq 0.
$$ 

To prove uniqueness let $k \in \mathbb Z$ have the properties that $e^{-k\pi}p \in \mathcal A_b(S,\pi)$ and $R_+(e^{-k\pi}p) \neq 0$. If $n> N_+(p)$, the identity 
$$
e^{-n\pi}p = e^{(-n+N_+(p))\pi}e^{-N_+(p)\pi}p 
$$ 
shows that $R_+(e^{-n\pi}p)= 0$. Hence $k \leq N_+(p)$. Assume for a contradiction that $k < N_+(p)$. Since $R_+(p_{N_+(p)}) \neq 0$ there is a $\delta > 0$ and a sequence $\{s_n\}$ in $S$ such that $\lim_{n \to \infty} \pi(s_n) = \infty$ and $|p_{N_+(p)}(s_n)| \geq \delta$ for all $n$. Write
$$
e^{-k\pi}p = h + p_{N_+(p)}e^{(N_+(p)-k)\pi} + \sum_{j=1}^\infty p_{N_+(p)-j} e^{(N_+(p)-j-k)\pi} 
$$
where $h \in \mathcal A_c(S,\pi)$.
Since $\lim_{n \to \infty} \pi(s_n) = \infty$ we see that
$$
\left|h(s) + \sum_{j=1}^\infty p_{N_+(p)-j}(s_n) e^{(N_+(p)-j-k)\pi(s_n)}\right| < e^{(N_+(p)-k)\pi(s_n)}\frac{\delta}{2}
$$
for all large $n$. Therefore 
$$
|e^{-k\pi(s_n)}p(s_n)| \geq (|p_{N_+(p)}(s_n)| - \frac {\delta}{2})e^{(N_+(p)-k)\pi(s_n)} \geq \frac{\delta}{2} e^{(N_+(p)-k)\pi(s_n)} ,
$$
for all large $n$, contradicting that $e^{-k\pi}p$ is bounded.
 \end{proof}

  For $p \in G(\pm) \backslash \mathcal A_c(S,\pi)$, given by a sum as in \eqref{holbaek3}, set 
$$
\mathbb L_+(p) := \max \{n \in \mathbb Z: \ R_+(p_n) \neq 0 \}
$$
in the plus case and
$$
\mathbb L_-(p) := \min \{n \in \mathbb Z: \ R_-(p_n) \neq 0 \}
$$
in the minus case. For $p \in G(\pm) \cap \mathcal A_c(S,\pi)$, set
$$
\mathbb L_\pm(p) = 0.
$$
It follows from Lemma \ref{roervigx} that $\mathbb L_\pm(p)$ does not depend on how $p$ is realized as a Laurent polynomial as in \eqref{holbaek3}. Note also that $\mathbb L_+(p)$ only depends on the behaviour of $p$ close to $+\infty$: If there is an $R > 0$ such that $p(s) = q(s)$ when $\pi(s) \geq R$, then $\mathbb L_+(p) = \mathbb L_+(q)$. A similar remark applies to $\mathbb L_-$.

 Let $T \in \mathbb R$, $T > 0$. For $p \in G(+)$ we write $0 \prec p$ when there is an $\epsilon > 0$ such that
\begin{equation}\label{11-01-23+}
e^{-\mathbb L_+(p)\pi(s)}p(s) \geq \epsilon \ \ \text{for all} \ s  \in \pi^{-1}([T,\infty)).
\end{equation}
Similarly, for $p \in G(-)$ we write $0 \prec p$ when there is an $\epsilon > 0$ such that
\begin{equation}\label{11-01-23-}
e^{-\mathbb L_-(p)\pi(s)}p(s) \geq \epsilon \ \text{for all} \ s \in \pi^{-1}((-\infty,-T]).
\end{equation}

Set
$$
G(\pm)^+_T := \left\{p \in G(\pm) : \ 0 \prec p\right\} \cup \{0\} .
$$

\begin{lemma}\label{21-07-23} Let $p \in G(\pm)$ and write $p$ as in \eqref{holbaek3}. The following conditions (a), (b) and (c) are equivalent.

\begin{itemize}
\item[(a)] $p \in G(\pm)^+_T \backslash \{0\}$.
\item[(b)] 
\begin{itemize}
\item[(1)]  $p(s) > 0$ when $\pm \pi(s) \geq T$, and
\item[(2)]  $R_\pm(p_{\mathbb L_\pm(p)})$ is strictly positive everywhere on $\partial S_\pm$.
\end{itemize}
\item[(c)]
\begin{itemize}
\item[(1)]  $p(s) > 0$ when $\pm \pi(s) \geq T$, and
\item[(2)]  $R_\pm(e^{-\mathbb L_\pm(p)\pi}p)$ is strictly positive everywhere on $\partial S_\pm$.
\end{itemize}
\end{itemize} 
\end{lemma}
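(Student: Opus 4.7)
The plan is to establish the equivalences in the plus case, the minus case being identical. First I would dispose of (b) $\Leftrightarrow$ (c). If $p \in \mathcal A_c(S,\pi)$ with $p \neq 0$, then $\mathbb L_+(p) = 0$ by definition, while (b)(1) and (c)(1) both demand $p(s) > 0$ for $\pi(s) \geq T$, which is incompatible with compact support because $\pi(S)$ is unbounded above. Hence we may assume $p \notin \mathcal A_c(S,\pi)$, and then Lemma \ref{roervigx} gives $\mathbb L_+(p) = N_+(p)$ together with the identity $R_+(e^{-\mathbb L_+(p)\pi}p) = R_+(p_{\mathbb L_+(p)})$. Consequently (b)(2) and (c)(2) literally coincide, so (b) $\Leftrightarrow$ (c) is immediate.

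For (a) $\Rightarrow$ (c), assume there is $\epsilon > 0$ with $e^{-\mathbb L_+(p)\pi(s)}p(s) \geq \epsilon$ for every $s \in \pi^{-1}([T,\infty))$. Multiplying by $e^{\mathbb L_+(p)\pi(s)} > 0$ yields $p(s) > 0$ there, which is (c)(1). By Lemma \ref{roervigx}, $f := e^{-\mathbb L_+(p)\pi}p$ lies in $\mathcal A_b(S,\pi)$, so $f|_{S_+}$ belongs to $C_{b,\mathbb R}(S_+) = C(\beta S_+)$, and $R_+(f)$ is its restriction to $\partial S_+$. Given $\xi \in \partial S_+$, pick a net $\{s_\alpha\}$ in $S_+$ with $s_\alpha \to \xi$ in $\beta S_+$. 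Since $\xi \notin S_+$ and $\pi|_{S_+}$ is proper, $\{s_\alpha\}$ eventually leaves the compact set $\pi^{-1}([0,T])$, so eventually $\pi(s_\alpha) \geq T$ and $f(s_\alpha) \geq \epsilon$, whence $R_+(f)(\xi) = \lim_\alpha f(s_\alpha) \geq \epsilon$.

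For (c) $\Rightarrow$ (a), suppose by contradiction that no $\epsilon > 0$ works. Then there is a sequence $\{s_n\} \subseteq \pi^{-1}([T,\infty))$ with $e^{-\mathbb L_+(p)\pi(s_n)}p(s_n) \to 0$. Either $\{s_n\}$ has a subsequence converging in $S$ to some $s$, in which case $\pi(s) \geq T$ and continuity forces $p(s) = 0$, contradicting (c)(1); or $\{s_n\}$ escapes every compact subset of $S$, whence properness of $\pi$ yields $\pi(s_n) \to \infty$ after passing to a subsequence. A subnet then converges in the compact space $\beta S_+$ to some point $\xi$, which must lie in $\partial S_+$ because the tails $\{s_m : m \geq n\}$ eventually leave every compact subset of $S_+$. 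Continuity of the extension of $f = e^{-\mathbb L_+(p)\pi}p$ to $\beta S_+$ then gives $R_+(f)(\xi) = 0$, contradicting (c)(2). The main delicacy is precisely this net argument placing the cluster point in $\partial S_+$; everything else is bookkeeping once Lemma \ref{roervigx} and the identification $C_{b,\mathbb R}(S_+) = C(\beta S_+)$ are in hand.
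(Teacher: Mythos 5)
Your proof is correct and takes essentially the same route as the paper: the paper settles (b) $\Leftrightarrow$ (c) via the identity $R_\pm\bigl(e^{-\mathbb L_\pm(p)\pi}p\bigr) = R_\pm\bigl(p_{\mathbb L_\pm(p)}\bigr)$ of Lemma \ref{roervigx} (the limit statement it cites is exactly the content of \eqref{18-07-23}) and then declares (a) $\Rightarrow$ (c) obvious and (c) $\Rightarrow$ (a) easy. Your net and compactness arguments in $\beta S_+$ are precisely the details the paper waves at, carried out correctly, including the reduction to $p \notin \mathcal A_c(S,\pi)$, which is harmless since all three conditions fail for a non-zero compactly supported $p$ because $\pi(S)$ is unbounded above.
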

\begin{proof} (b) $\Leftrightarrow$ (c) follows because 
\begin{equation*}\label{11-01-23bx}
\lim_{\pm \pi(s) \to \infty} \left(p_{\mathbb L_\pm(p)}(s)  - e^{-\mathbb L_\pm(p)\pi(s)}p(s)\right) = 0.
\end{equation*}
(a) $\Rightarrow$ (c) is obvious and (c) $\Rightarrow$ (a) is easy.
\end{proof}

\begin{lemma}\label{09-01-23x} The pair $(G(\pm),G(\pm)^+_T)$ has the following properties:
\begin{itemize}
\item[(0)]  $G(\pm)^+_T + G(\pm)^+_T = G(\pm)^+_T$,
\item[(1)] $G(\pm)^+_T \cap (-G(\pm)^+_T) = \{0\}$,
\item[(2)] $G(\pm)^+_T - G(\pm)^+_T = G(\pm)$,
\item[(3)] $f\in G(\pm), \ n \in \mathbb N, \ nf \in G(\pm)^+_T \Rightarrow f \in G(\pm)^+_T$.
\end{itemize}
\end{lemma}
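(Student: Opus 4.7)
The plan is to verify the four properties by appealing, in each case, to the characterization of $G(\pm)^+_T \setminus \{0\}$ given by condition (b) of Lemma \ref{21-07-23}. Throughout I focus on the plus case; the minus case is obtained by interchanging $\max$ with $\min$, $[T,\infty)$ with $(-\infty,-T]$, and $\psi_k^+$ with $\psi_k^-$.

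For property (0), given non-zero $p, q \in G(+)^+_T$ with $p + q \neq 0$, condition (b)(1) is immediate from $p(s), q(s) > 0$ on $\pi^{-1}([T,\infty))$. For (b)(2), write $p = \sum p_n e^{n\pi}$ and $q = \sum q_n e^{n\pi}$. By Lemma \ref{roervigx} and Lemma \ref{helsingor1}, $p_n, q_n \in \mathcal C$ for $n > \mathbb L_+(p)$ and $n > \mathbb L_+(q)$ respectively. In the case $\mathbb L_+(p) > \mathbb L_+(q)$ this yields $\mathbb L_+(p+q) = \mathbb L_+(p)$ with $R_+((p+q)_{\mathbb L_+(p)}) = R_+(p_{\mathbb L_+(p)}) > 0$ on $\partial S_+$; the case $\mathbb L_+(p) = \mathbb L_+(q)$ uses that a sum of two strictly positive continuous functions on the compact space $\partial S_+$ is strictly positive. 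Property (1) follows at once from (b)(1): if $0 \neq p \in G(+)^+_T \cap (-G(+)^+_T)$, then at any $s$ with $\pi(s) \geq T$ one would have $p(s) > 0$ and $p(s) < 0$ simultaneously, a contradiction since $\pi(S)$ is unbounded above by hypothesis of Theorem \ref{main}. Property (3) is equally immediate from (b) because multiplication by a positive integer $n$ leaves $\mathbb L_\pm$ unchanged and preserves positivity both on $\pi^{-1}([T,\infty))$ and on $\partial S_+$.

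The main work is property (2). For $p = \sum_{n \leq L} p_n e^{n\pi} \in G(+)$, I fix integers $k > 1/T$ and $N > L$, so that $\psi_k^+ \circ \pi = 1$ on $\pi^{-1}([T,\infty))$. For every integer $M$,
$$
M \psi_k^+ \circ \pi \;=\; \psi_1^+ \circ \pi \cdot l_+(M) \;+\; M\bigl(\psi_k^+ - \psi_1^+\bigr) \circ \pi \;\in\; E_+ + \mathcal C \;=\; A(+),
$$
using $M \in C_+$, $l_+(M) = Ml_+(1) = M$, and item (f) of Lemma \ref{helsingor}. Hence $q := p + M\psi_k^+ \circ \pi \cdot e^{N\pi}$ lies in $G(+)$. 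Because $\pi$ is proper on $S_+$, every $\xi \in \partial S_+$ satisfies $\pi(\xi) = +\infty$ in the natural extension $\pi : \beta S_+ \to [0,\infty]$, so $R_+(M\psi_k^+ \circ \pi)$ equals the constant $M$ on $\partial S_+$; hence $q_N = M\psi_k^+ \circ \pi$ witnesses (b)(2). A crude bound $|p(s)| \leq C e^{L\pi(s)}$ for $\pi(s) \geq 0$ shows that, for all sufficiently large integers $M$, $q(s) = p(s) + Me^{N\pi(s)} > 0$ whenever $\pi(s) \geq T$, giving (b)(1). The same reasoning applied to $M\psi_k^+ \circ \pi \cdot e^{N\pi}$ alone shows it lies in $G(+)^+_T$, so $p = q - M\psi_k^+ \circ \pi \cdot e^{N\pi} \in G(+)^+_T - G(+)^+_T$.

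The main obstacle is the construction in (2): one needs a witness whose leading coefficient lies in $A(+)$ and whose $R_+$-image is strictly positive on $\partial S_+$. The cut-off functions $\psi_k^+ \circ \pi$ together with item (f) of Lemma \ref{helsingor} and the inclusion $1 \in C_+$ are precisely tailored to this requirement; once the witness is available, properties (0), (1) and (3) are essentially formal consequences of the characterization in Lemma \ref{21-07-23}.
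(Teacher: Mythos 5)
Your proof is correct, and its skeleton coincides with the paper's: (1) and (3) are dismissed as immediate, (0) is handled by comparing leading terms through Lemma \ref{roervigx} and the characterization in Lemma \ref{21-07-23}, and (2) is proved by exhibiting a dominating element $q \in G(+)^+_T$ with $q - p \in G(+)^+_T$ and writing $p = q - (q-p)$. The only genuine divergence is how the dominating element is built. The paper takes $q = h + \psi_1^+\circ\pi\, e^{(M+1)\pi}$ with $h \in \mathcal C$ chosen via the density property (d) of Lemma \ref{helsingor} so that $q(s) > |p(s)|$ for $\pi(s) \geq 0$; you instead take $q = p + M\psi_k^+\circ\pi\, e^{N\pi}$ for a large integer $M$, verifying $M\psi_k^+\circ\pi \in A(+)$ from $1 \in C_+$ (property (a)), additivity of $C_+$ and $\mathcal C$, $l_+(M) = Ml_+(1) = M$, and property (f), and then using the crude bound $|p(s)| \leq Ce^{L\pi(s)}$ on $\pi^{-1}([0,\infty))$ to get positivity. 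Your route avoids the density property (d) altogether, relying only on the group structure of the data — in fact it mirrors the trick the paper itself deploys later, in the proof of Lemma \ref{01-09-21}(2), where $c := \psi^+_{2k+1}\circ\pi - \psi^+_1\circ\pi \in \mathcal C$ is used to show $e^{n\pi}\psi_k^+\circ\pi$ is an admissible summand — while the paper's compactly supported correction is slightly shorter to state. One cosmetic remark: your justification that $R_+(M\psi_k^+\circ\pi) = M$ via the extension $\pi : \beta S_+ \to [0,\infty]$ being $+\infty$ on $\partial S_+$ is valid (properness of $\pi$ does force this), but the cleaner in-paper argument is simply that $\psi_k^+\circ\pi - \psi_1^+\circ\pi \in \mathcal C \subseteq \mathcal A_c(S,\pi)$ is annihilated by $R_+$ while $R_+(\psi_1^+\circ\pi) = 1$.
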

\begin{proof} We consider only the plus case. (1) and (3) are trivial. (0): Let $p,q \in G(+)^+_T \backslash \{0\}$. Assume $\mathbb L_+(p) \geq \mathbb L_+(q)$. Then 
$$
e^{-\mathbb L_+(p)\pi}(p+q) =  e^{-\mathbb L_+(p)\pi}p  + e^{(\mathbb L_+(q) - \mathbb L_+(p))\pi} e^{-\mathbb L_+(q)\pi}q \in \mathcal A_b(S,\pi)
$$
and
$$
R_+( e^{-\mathbb L_+(p)\pi}(p+q)) \geq R_+( e^{-\mathbb L_+(p)\pi}p) > 0 ,
$$
showing that $\mathbb L_+(p+q) = \mathbb L_+(p)$. Note that $e^{-\mathbb L_+(p+q)\pi(s)}(p(s)+q(s)) > 0$ for all $s \in \pi^{-1}([T,\infty))$ and 
$$
R_+(e^{-\mathbb L_+(p+q)\pi}(p+g)) = R_+(e^{-\mathbb L_+(p)\pi}(p+q))
$$
 is strictly positive. It follows that $p+q \in G(+)^+_T \backslash \{0\}$ by Lemma \ref{21-07-23}. (2): Let $p \in G(+)\backslash \{0\}$ and write $p$ as in \eqref{holbaek3}. Let $M$ be the largest $n$ for which $p_n \neq 0$. Set 
 $$
 q(s):= h(s)  + \psi^+_1(\pi(s)) e^{(M+1)\pi(s)}
 $$
 where $h\in \mathcal C$ is chosen such that $q(s) > |p(s)|$ when $\pi(s) \geq 0$. This is possible by definition of $M$ and condition (d) of Lemma \ref{helsingor}. It follows from (a) of Lemma \ref{helsingor} and the definition of $E_+$ that $q \in G(+)$. Since $\mathbb L_+(q) = \mathbb L_+(q-p) = M+1$ and
$$
R_+(e^{-(M +1)\pi}q) = R_+(e^{-(M +1)\pi}(q-p)) = 1,
$$
it follows from Lemma \ref{21-07-23} that $q, q-p \in G(+)^+_T$. Since $p = q- (q-p)$ this proves (2).
\end{proof}

We need some preparations to show that $(G(\pm),G(\pm)^+_T)$ are dimension groups. For this we shall use results as well as terminology from \cite{EHS}; in particular the notion of strong Riesz groups. 
Given a ordered group $(H,H^+)$ we can consider the direct sum $\oplus_{\mathbb Z} H$ with the lexiographic order $<_{lex}$. The corresponding semi-group of positive elements is
$$
\left(\oplus_{\mathbb Z} H\right)^+_{lex} = \{0\} \cup \left\{ f \in \oplus_{\mathbb Z} H : \ f_n \in H^+ \ \text{when} \ n = \max \{k: \ f_k \neq 0\} \right\}.
$$

\begin{lemma}\label{11-01-23f} If $(H,H^+)$ is a strong Riesz group then so is $\left( \oplus_{\mathbb Z} H, \left(\oplus_{\mathbb Z} H\right)^+_{lex}\right)$.
\end{lemma}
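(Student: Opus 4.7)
The plan is to build up to $\oplus_\mathbb Z H$ from iterated lex sums of two groups and then pass to a direct limit. Every element of $\oplus_\mathbb Z H$ has finite support, so with its lex order this group is the order-preserving direct limit of the finite sub-lex-sums $\oplus_{-N\leq n\leq N} H$ as $N\to\infty$, each of which is an iterated lex sum of copies of $H$. Since the strong Riesz property is preserved under direct limits of partially ordered abelian groups along order embeddings, a standard fact from \cite{EHS}, it suffices to treat the two-component case: if $(H,H^+)$ and $(K,K^+)$ are strong Riesz, then so is the lex sum $K \oplus_{lex} H$ with $H$ placed at the top.

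For this two-component case, the ordered-group axioms and unperforation follow directly from those of $H$ applied to the top coordinate, with the bottom coordinate only contributing when the top vanishes. The core of the work is Riesz interpolation. Given $(k^{(i)}, h^{(i)}) \leq_{lex} (k'^{(j)}, h'^{(j)})$ for $i, j \in \{1, 2\}$, I would first apply Riesz interpolation in $H$ to the top coordinates to produce $h \in H$ with $h^{(i)} \leq h \leq h'^{(j)}$. If $h$ can be chosen strictly between all four bounds, which is possible precisely when all four lex inequalities are strict at the top, then the bottom coordinate is irrelevant and any choice works. When some of the top inequalities are equalities, those equalities transfer to bottom-coordinate inequalities in $K$, and a second application of Riesz interpolation in $K$ restricted to the affected pairs yields the bottom coordinate of the interpolant.

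The main obstacle is the case analysis in which the four top-coordinate inequalities $h^{(i)} \leq h'^{(j)}$ mix equalities with strict inequalities: each equality adds a bottom-coordinate constraint that must hold simultaneously, and the interpolant $h \in H$ must be chosen compatibly with the pattern of equalities before the $K$-interpolation step can proceed. This is precisely the place where one uses the ``strong'' in strong Riesz group in the guise of a refined interpolation statement, and where both hypotheses on $H$ and $K$ are used in concert.
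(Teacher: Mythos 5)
Your proposal is correct, but it takes a genuinely different route from the paper's. The paper never proves interpolation for the lexicographic sum directly: it quotes Theorem 3.10 of \cite{E3} (alternatively the proof of Lemma 3.2 of \cite{BEK1}) for the Riesz decomposition property, notes that unperforation is inherited coordinatewise, and then upgrades from Riesz to \emph{strong} Riesz via the abstract characterization in Corollary 1.2 of \cite{EHS} --- a Riesz group is strong if and only if it is prime and has no minimal elements --- verifying these two properties by placing a single element $0 < h' < h_n$ of $H$ (obtained from strong interpolation, resp.\ absence of minimal elements, in $H$) at the top coordinate. You instead prove strict interpolation by hand: write $\oplus_{\mathbb Z} H$ as the increasing union of the finite lex sums $\oplus_{|n| \leq N} H$, note the inclusions are order embeddings (the topmost nonzero coordinate of an element is unchanged), so that strict inequalities in the union are witnessed at a finite stage and strict interpolation passes to the limit, and then reduce by induction to the two-component sum $K \oplus_{\mathrm{lex}} H$. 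Your approach buys self-containedness --- no appeal to Elliott's theorem or to the prime/minimal-element machinery of \cite{EHS} --- at the cost of a case analysis; the paper buys brevity at the cost of two external citations. One remark: the ``main obstacle'' you flag is more benign than you suggest, and in particular there is no delicate compatible choice of the top interpolant $h$ to make. If some top equality $h_i = h'_j =: t$ occurs, then any interpolant is \emph{forced} to have top coordinate $t$ (and two different equalities force the same $t$, since the hypotheses give $h_1 \leq h'_2$ and $h_2 \leq h'_1$); the active bottom constraints come exactly from the indices with $h_i = t = h'_j$, and for those the strict hypothesis $(k_i,h_i) < (k'_j,h'_j)$ already yields $k_i < k'_j$, so \emph{strict} interpolation in $K$ (not merely Riesz interpolation, as you write at one point) produces the bottom coordinate, all other pairs being handled by top-strictness. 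If no top equality occurs, all four top inequalities are strict, strong interpolation in $H$ gives a strictly intermediate $h$, and the bottom coordinate is unconstrained. With that dichotomy made explicit, together with the (easy, as you say) directedness and unperforation of the lex sum, your argument is complete.
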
 
\begin{proof} It follows from Theorem 3.10 of \cite{E3} that $\left( \oplus_{\mathbb Z} H, \left(\oplus_{\mathbb Z} H\right)^+_{lex}\right)$ is an ordered group with the Riesz decomposition property. (A different and slightly more elaborate argument, which establishes the Riesz interpolation property instead, is given in the proof of Lemma 3.2 of \cite{BEK1}.) Since $\left( \oplus_{\mathbb Z} H, \left(\oplus_{\mathbb Z} H\right)^+_{lex}\right)$ is unperforated because $(H,H^+)$ is, it follows that $\left( \oplus_{\mathbb Z} H, \left(\oplus_{\mathbb Z} H\right)^+_{lex}\right)$ is a Riesz group. By Corollary 1.2 of \cite{EHS} it is therefore sufficient here to show that $\left( \oplus_{\mathbb Z} H , \left(\oplus_{\mathbb Z} H\right)^+_{lex}\right)$ is prime and without minimal elements. It follows from Corollary 1.2 of \cite{EHS} that $(H,H^+)$ has these properties. Let $h =(h_n) \in (\oplus_\mathbb Z H)^+_{lex} \backslash \{0\}$ and set $n := \max\{k: h_k \neq 0\}$. Since $(H,H^+)$ does not have minimal elements there is an element $h'$ such that $0 <h'<h_n$ in $(H,H^+)$. Define $h'' \in \oplus_\mathbb Z H$ such that $h''_j = 0, \ j \neq n$, and $h''_n = h'$. Then $0 < h'' < h$ in $\left( \oplus_{\mathbb Z} H, \left(\oplus_{\mathbb Z} H\right)^+_{lex}\right)$, showing that $\left( \oplus_{\mathbb Z} H, \left(\oplus_{\mathbb Z} H\right)^+_{lex}\right)$ has no minimal elements. To show that $\left( \oplus_{\mathbb Z} H, \left(\oplus_{\mathbb Z} H\right)^+_{lex}\right)$ is prime with the aid of Proposition 1.1 in \cite{EHS}, consider two elements $a =(a_n), \ b = (b_n)$ in $ \left(\oplus_{\mathbb Z} H\right)^+_{lex} \backslash \{0\}$. If $\max \{k : \ a_k \neq 0\} = \max \{k : \ b_k \neq 0\} =: n$ we use the strong Riesz interpolation property of $(H,H^+)$ to find an element $h' \in H^+$ such that $0 < h' < a_n$ and $0< h'<b_n$. We define $h''$ as above and note that $0 < h'' \leq a, b$ in $\left( \oplus_{\mathbb Z} H, \left(\oplus_{\mathbb Z} H\right)^+_{lex}\right)$. If $\max \{k : \ a_k \neq 0\} >  \max \{k : \ b_k \neq 0\} =: n$ we use that $H^+$ has no minimal elements to find $h' \in H^+$ such that $0 < h' < b_n $ in $(H,H^+)$. Defining $h''$ as before we find that  $0 < h'' \leq a, b$ in $\left( \oplus_{\mathbb Z} H, \left(\oplus_{\mathbb Z} H\right)^+_{lex}\right)$. It follows now from Proposition 1.1 in \cite{EHS} that $\left( \oplus_{\mathbb Z} H, \left(\oplus_{\mathbb Z} H\right)^+_{lex}\right)$ is prime.
\end{proof}

Since $C_\pm$ is dense in $r^*_\pm(C_\mathbb R(D_\pm))$, it is wellknown and easy to see that  $(C_\pm, C_\pm^+)$ is a strong Riesz group when we set
$$
C_\pm^+ := \left\{ h \in C_\pm : \ h \geq \epsilon \ \text{for some} \ \epsilon > 0 \right\} \cup \{0\},
$$
cf. \cite{EHS}. It follows therefore from Lemma \ref{11-01-23f} that $(\oplus_\mathbb Z C_\pm , (\oplus_\mathbb Z C_\pm)^+_{lex})$ is a strong Riesz group. It is this version of Lemma \ref{11-01-23f} we use below.

\begin{lemma}\label{osterbro2} Set 
$$
(\oplus_\mathbb Z A(+))^{++} := \left\{ (p_n) \in \oplus_\mathbb Z A(+): \ (R_+(p_n)) \in (\oplus_\mathbb Z C_+)^+_{lex}\backslash \{0\}\right\} \cup \{0\}.
$$ 
Then $\left(\oplus_\mathbb Z A(+),(\oplus_\mathbb Z A(+))^{++}\right)$ is a strong Riesz group. Similarly, set
$$
(\oplus_\mathbb Z A(-))^{++} := \left\{ (p_n) \in \oplus_\mathbb Z A(-): \ (R_-(p_{-n})) \in (\oplus_\mathbb Z C_-)^+_{lex} \backslash \{0\}\right\} \cup \{0\}.
$$ 
Then $\left(\oplus_\mathbb Z A(-),(\oplus_\mathbb Z A(-))^{++}\right)$ is a strong Riesz group.
\end{lemma}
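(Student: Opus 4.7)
The plan is to reduce to Lemma \ref{11-01-23f} via a surjective order-compatible homomorphism onto $(\oplus_\mathbb Z C_\pm, (\oplus_\mathbb Z C_\pm)^+_{lex})$, which by that lemma, applied to the strong Riesz group $(C_\pm, C_\pm^+)$ already identified just above the statement, is itself a strong Riesz group. The bridge is the coordinatewise map $\tilde R_\pm : \oplus_\mathbb Z A(\pm) \to \oplus_\mathbb Z C_\pm$ sending $(p_n) \mapsto (R_+(p_n))$ in the plus case and $(p_n) \mapsto (R_-(p_{-n}))$ in the minus case, so that by construction $(\oplus_\mathbb Z A(\pm))^{++}$ equals $\tilde R_\pm^{-1}\!\bigl((\oplus_\mathbb Z C_\pm)^+_{lex} \setminus \{0\}\bigr) \cup \{0\}$.

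First I would verify that $R_\pm : A(\pm) \to C_\pm$ is surjective with kernel $\mathcal C$; the kernel identification is Lemma \ref{helsingor1}. For surjectivity, given $c \in C_\pm$, the element $\psi_1^\pm \circ \pi \cdot l_\pm(c)$ lies in $E_\pm \subseteq A(\pm)$ by definition, and since $\psi_1^\pm \circ \pi$ equals $1$ off a $\pi$-compact subset of $S_\pm$, the $\beta S_\pm$-closure of the set where $\psi_1^\pm \circ \pi = 1$ contains $\partial S_\pm$ (a standard consequence of properness of $\pi$), so $R_\pm(\psi_1^\pm \circ \pi \cdot l_\pm(c)) = R_\pm(l_\pm(c)) = c$ by the right-inverse property of $l_\pm$. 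Consequently $\tilde R_\pm$ is a surjective group homomorphism with kernel $\oplus_\mathbb Z \mathcal C$. The ordered-group axioms for $(\oplus_\mathbb Z A(\pm), (\oplus_\mathbb Z A(\pm))^{++})$ then transfer from those of $(\oplus_\mathbb Z C_\pm, (\oplus_\mathbb Z C_\pm)^+_{lex})$: closure of the cone under addition and properness follow since the target cone is proper, while the generating property $P - P = \oplus_\mathbb Z A(\pm)$ follows by taking any $g$, decomposing $\tilde R_\pm(g) = p' - q'$ with both $p', q' \in (\oplus_\mathbb Z C_\pm)^+_{lex}\setminus\{0\}$, and lifting $p'$ via surjectivity of $\tilde R_\pm$ to $p$, setting $q = p - g$.

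Finally, the strong Riesz property lifts cleanly. Given strict inequalities $a_i < b_j$ in $\oplus_\mathbb Z A(\pm)$ for $i,j \in \{1,2\}$ (i.e.\ $b_j - a_i \in (\oplus_\mathbb Z A(\pm))^{++}\setminus \{0\}$), the images obey strict inequalities $\tilde R_\pm(a_i) < \tilde R_\pm(b_j)$ in $(\oplus_\mathbb Z C_\pm, (\oplus_\mathbb Z C_\pm)^+_{lex})$. Lemma \ref{11-01-23f} supplies an element $c'$ strictly between them, and any lift $c \in \tilde R_\pm^{-1}(c')$ satisfies $a_i < c < b_j$ strictly in $\oplus_\mathbb Z A(\pm)$ because $\tilde R_\pm(c - a_i)$ and $\tilde R_\pm(b_j - c)$ are nonzero and lie in $(\oplus_\mathbb Z C_\pm)^+_{lex}$. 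Unperforatedness, absence of minimal positive elements, and primeness all transfer by exactly this same lifting method. The only technical point requiring genuine verification is the surjectivity identity $R_\pm(\psi_1^\pm \circ \pi \cdot l_\pm(c)) = c$, which rests on properness of $\pi$ and on the fact that Stone-\v{C}ech boundaries see only the behaviour of bounded continuous functions outside compact sets; beyond this I expect no structural obstacle, since the positive cone was defined precisely to make $\tilde R_\pm$ an order-preserving surjection onto a known strong Riesz group.
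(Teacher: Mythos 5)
Your proof is correct and follows essentially the same route as the paper, which simply cites Lemma \ref{11-01-23f} together with Lemma 3.2 of \cite{EHS}: that cited lemma is exactly your pullback step, namely that pulling back the strictly positive cone along a surjective homomorphism onto a strong Riesz group yields a strong Riesz group. The only difference is that you spell out the proof of that pullback lemma and verify the surjectivity of $R_\pm$ on $A(\pm)$ explicitly (via $\psi_1^\pm\circ\pi\, l_\pm(c)$ and properness of $\pi$), details the paper delegates to the reference, and your handling of the index flip $(p_n)\mapsto (R_-(p_{-n}))$ matches the paper's remark about using the lexicographic rather than colexicographic order in the minus case.
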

\begin{proof} This follows now from Lemma \ref{11-01-23f} above and Lemma 3.2 in \cite{EHS}. (Note the minus sign of $p_{-n}$ in the definition of $(\oplus_\mathbb Z A(-))^{++}$ which is introduced in order to use the lexiographic order rather than the colexiographic order in the minus case.)
\end{proof}

Lemma \ref{osterbro2} will be used to prove the following

\begin{lemma}\label{skovshoved5} $(G(\pm),G(\pm)^+_T)$ are strong Riesz groups.
\end{lemma}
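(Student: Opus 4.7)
My plan is to apply Corollary 1.2 of \cite{EHS}: a Riesz group is strong Riesz provided it is prime and has no minimal positive element. Unperforatedness is supplied by Lemma \ref{09-01-23x}(3), so three items remain to verify: Riesz interpolation, primeness, and absence of minimal positive elements. The underlying device is the abelian group isomorphism $G(\pm) \cong \oplus_{\mathbb Z} A(\pm)$ given by the unique Laurent coefficient decomposition. Under this identification $G(\pm)^+_T$ is strictly contained in $(\oplus_{\mathbb Z} A(\pm))^{++}$: the former imposes the leading-coefficient condition from Lemma \ref{21-07-23}(c)(2) \emph{together with} an extra pointwise positivity requirement on $\pi^{-1}([T,\infty))$.

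For Riesz interpolation, suppose $a_1, a_2 \leq b_1, b_2$ in the $G(+)^+_T$-order. I first apply Lemma \ref{osterbro2} to obtain a $c_0 \in G(+)$ interpolating the $a_i$ and $b_j$ in the coarser $(\oplus_{\mathbb Z} A(+))^{++}$-order. I then modify $c_0$ by adding a correction $h \in \mathcal C$; since $R_+(h)=0$ this preserves the leading-coefficient data and hence the coarser-order interpolation, but can be chosen to deliver pointwise positivity on the compact slab $\pi^{-1}([T, R])$, where $R$ is fixed large enough that $c_0 - a_i$ and $b_j - c_0$ are already strictly positive on $\pi^{-1}([R, \infty))$ by leading-coefficient dominance. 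A natural candidate is $h_0 := \chi\circ\pi \cdot (\alpha + \beta)/2$ with $\alpha := \max_i (a_i - c_0)$, $\beta := \min_j (b_j - c_0)$ and $\chi : \mathbb R \to [0,1]$ a cutoff equal to $1$ on $[T, R]$ and to $0$ outside $[T-1, R+1]$; this $h_0$ is fiber-affine and compactly supported, and property (d) of Lemma \ref{helsingor} lets me approximate it by an element of $\mathcal C$ closely enough to preserve all four strict inequalities.

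For primeness, given $p_1, p_2 \in G(+)^+_T \setminus \{0\}$, pick an integer $M \leq \min_i \mathbb{L}_+(p_i)$ and $k$ so large that $1/k < T$, so $\psi_k^+\circ\pi \equiv 1$ on $\pi^{-1}([T,\infty))$. The element $q := \epsilon\,\psi_k^+\circ\pi \cdot e^{M\pi}$ lies in $G(+)$ by property (f) of Lemma \ref{helsingor} together with $\psi_1^+\circ\pi \in E_+$, and for $\epsilon > 0$ sufficiently small both $q$ and each $p_i - q$ satisfy the criteria of Lemma \ref{21-07-23}(c), giving $0 < q \leq p_i$. The absence of minimal positive elements is immediate from $0 < p/2 < p$, available by property (c) of Lemma \ref{helsingor}.

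The principal obstacle I anticipate is the transition region $\pi^{-1}([R, R+1])$ in the perturbation step, where the cutoff $\chi$ interpolates between $1$ and $0$ and one must still verify positivity of $(c_0 - a_i)+h$ and $(b_j - c_0)-h$. This is controlled by taking $R$ so large that the infima of $c_0 - a_i$ and $b_j - c_0$ on that annulus (which grow because their leading coefficients are bounded below on the compact space $\partial S_+$) dominate the sup norm of $h$ there, which is itself bounded by $\sup_{\pi^{-1}([T, R])} |(\alpha+\beta)/2|$. The minus case is handled symmetrically throughout.
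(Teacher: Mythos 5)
Your overall architecture is legitimate, and in fact differs from the paper's: the paper proves the \emph{strong} Riesz interpolation property directly (Lemma \ref{09-01-23x} plus a direct construction), whereas you prove ordinary interpolation and then upgrade via Corollary 1.2 of \cite{EHS} using primeness and absence of minimal positive elements --- the same device the paper itself uses one level down, in Lemma \ref{11-01-23f}. Your primeness argument (the element $q = \epsilon\,\psi^+_k\circ\pi\, e^{M\pi}$ with $\epsilon$ rational and small, $M \leq \min_i \mathbb L_+(p_i)$, justified via properties (a), (c), (f) of Lemma \ref{helsingor} and the criterion of Lemma \ref{21-07-23}) and your no-minimal-elements argument ($0 < p/2 < p$, using (c)) are both sound, and your reduction of interpolation to Lemma \ref{osterbro2} followed by a compactly supported correction matches the paper's strategy.

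However, the correction step contains a genuine gap: your candidate $h_0 := \chi\circ\pi\cdot(\alpha+\beta)/2$ with $\alpha := \max_i(a_i - c_0)$ and $\beta := \min_j(b_j - c_0)$ is \emph{not} fiber-affine, contrary to your claim. On each fiber $\pi^{-1}(t)$ the functions $a_i - c_0$ are affine, but a pointwise maximum of two affine functions is convex and a pointwise minimum is concave, so $(\alpha+\beta)/2$ generally fails to restrict to an affine function on $\pi^{-1}(t)$. Consequently $h_0 \notin \mathcal A_c(S,\pi)$, and property (d) of Lemma \ref{helsingor} --- which only approximates elements of $\mathcal A_c(S,\pi)$ by elements of $\mathcal C$ --- cannot be invoked. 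This is not cosmetic: producing a continuous function, affine on every fiber, strictly between $\max_i(a_i - c_0)$ and $\min_j(b_j - c_0)$ on the compact slab is precisely where the Choquet-simplex structure of the fibers must be used (an Edwards-type separation/interpolation theorem), and it is exactly the point at which the paper invokes Lemma 4.4 of \cite{ET} to obtain $w \in \mathcal A_c(S,\pi)$ with $a_i - c_0 < w < b_j - c_0$ on $\pi^{-1}([T,N+1])$, before approximating $w$ within $\mathcal C$ and gluing with the cutoff $\varphi_N$ much as you propose. If you replace your midpoint candidate by an appeal to that lemma, the rest of your argument (including your estimate on the transition region, which parallels the paper's choice of $\epsilon'$) goes through.
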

\begin{proof} Consider the plus case. In view of Lemma \ref{09-01-23x} it suffices to prove
\begin{itemize}
\item[(4)] $(G(+),G(+)_T^+)$ has the strong Riesz interpolation property.
\end{itemize}
Consider therefore elements $f^i,g^j \in G(+), \ i,j \in \{1,2\}$, such that $f^i < g^j$ in $(G(+),G(+)^+_T)$ for all $i,j$. Then $0 \prec g^j-f^i$ for all $i,j$, and it follows from Lemma \ref{roervigx} that
$$
\mathbb L_+(g^j -f^i) = \max\{ n\in \mathbb Z: R_+(f^i_n) \neq R_+(g^j_n) \}
$$
and from (b)(2) of Lemma \ref{21-07-23} that
$$
R_+(g^j_{\mathbb L_+(g^j -f^i)}) - R_+(f^i_{\mathbb L_+(g^j -f^i)}) \in C_+^+ \backslash \{0\} .
$$
Thus $\left(f^i_n\right)_{n \in \mathbb N}  < \left(g^j_n\right)_{n \in \mathbb N}$ in $\left(\oplus_\mathbb Z A(+),(\oplus_\mathbb Z A(+))^{++}\right)$ for all $i,j$.
It follows in this way from Lemma \ref{osterbro2} that there is an element $h \in G(+)$ such that 
$$
\left(f^i_n\right)_{n \in \mathbb N}  <  \left(h_n\right)_{n \in \mathbb N} < \left(g^j_n\right)_{n \in \mathbb N}
$$
in $\left(\oplus_\mathbb Z A(+),(\oplus_\mathbb Z A(+))^{++}\right)$ for all $i,j$. Then 
$$
R_+\left(e^{-\mathbb L_+(h-f^i)\pi}\left( h- f^i\right)\right) =R_+(h_{\mathbb L_+(h -f^i)}) - R_+(f^i_{\mathbb L_+(h -f^i)})
$$
are strictly positive on $\partial S_+$ for both $i$'s and there is therefore an $N_i\in \mathbb N$ and an $\epsilon_i > 0$ such that
$$
 e^{-\mathbb L_+(h-f^i) \pi(s)}\left( h(s) - f^i(s)\right) \geq \epsilon_i
 $$
 when $\pi(s) \geq N_i$, $i =1,2$. Handling the $g^j$'s in the same way we find an $N \in \mathbb N$ and an $\epsilon > 0$ such that
\begin{equation}\label{dragoer}
  e^{-\mathbb L_+(h-f^i) \pi(s)}\left( h(s) - f^i(s)\right) \geq \epsilon
\end{equation}
  and
\begin{equation}\label{dragoer1}
 e^{-\mathbb L_+(g^j -h) \pi(s)}\left( g^j(s) - h(s)\right) \geq \epsilon
\end{equation}
 for all $i,j$ when $\pi(s) \geq N$. We can safely assume that $N > T$. 
 
 It follows from Lemma \ref{21-07-23} that $f^i(s) < g^j(s)$ for all $s \in \pi^{-1}([T,\infty))$  and all $i,j$. Hence $f^i(s) - h(s) < g^j(s) - h(s)$ for all $i,j$ when $T \leq \pi(s) \leq N+1$. It follows therefore from Lemma 4.4 in \cite{ET} that there is a $w \in \mathcal A_c(S,\pi)$ such that
$$
f^i(s) - h(s) < w(s) <  g^j(s) - h(s)
$$
for all $i,j$ and $s \in \pi^{-1}([T,N+1])$. By using (d) of Lemma \ref{helsingor} we can arrange that $w \in \mathcal C$. Set 
$$
h' : = \varphi_N\circ \pi (h +w) + (1-\varphi_N \circ \pi) h
$$
Then $h' \in G(+)$ since $h'-h= \varphi_N \circ \pi w \in \mathcal C$ by (e) of Lemma \ref{helsingor}, and 
\begin{equation}\label{maarup}
 f^i(s) < h'(s) < g^j(s)
\end{equation}
 for all $i,j$ and all $s \in \pi^{-1}([T,\infty))$. Note that $\mathbb L_+(h'-f^i) = \mathbb L_+(h-f^i)$ and $\mathbb L_+(g^j-h') = \mathbb L_+(g^j-h)$. It follows therefore from \eqref{dragoer}, \eqref{dragoer1} and \eqref{maarup} that
$$
  e^{-\mathbb L_+(h'-f^i) \pi(s)}\left( h'(s) - f^i(s)\right) \geq \epsilon'
  $$
  and
$$
 e^{-\mathbb L_+(g^j -h') \pi(s)}\left( g^j(s) - h'(s)\right) \geq \epsilon'
 $$
 for all $i,j$ and all $s \in \pi^{-1}([T,\infty))$ provided $\epsilon' >0$ is small enough. Thus $f^i < h' < g^j$ in $(G(+),G(+)^+_T)$ for all $i,j$.
 
 The minus case is analogous and we leave it to the reader.
\end{proof}

\begin{lemma}\label{holbaekx} $(1-e^{-\pi}) G(+) \subseteq G(+)$ and $(1-e^{-\pi}) G(+)_T^+ \subseteq G(+)_T^+$.
\end{lemma}
\begin{proof} The first inclusion follows from the definition of $G(+)$. For the second, let $p \in G(+)_T^+ \backslash \{0\}$. Since $\supp ((1 - e^{-\pi})p) = \supp p$, it follows that $(1-e^{-\pi})p \notin  \mathcal A_c(S,\pi)$. Write $p$ as in \eqref{holbaek3}. Then 
$$
(1-e^{-\pi})p = \sum_{n \in \mathbb Z} (p_n -p_{n+1})e^{n\pi}
$$ 
and hence
\begin{align*}
&\mathbb L_+((1-e^{-\pi})p) = \max \left\{ n \in \mathbb Z: \ R_+(p_n - p_{n+1}) \neq 0 \right\} \\
&= \max \left\{ n \in \mathbb Z: \ R_+(p_n) \neq 0 \right\} = \mathbb L_+(p) .
\end{align*}
Since $R_+(e^{-\mathbb L_+(p)\pi}(1-e^{-\pi})p) = R_+(e^{-\mathbb L_+(p)\pi}p) $ is strictly positive on $\partial S_+$ and $(1-e^{-\pi(s)}) p(s) > 0$ for all $s \in \pi^{-1}([T,\infty))$, it follows from Lemma \ref{21-07-23} that $(1-e^{-\pi})p \in G(+)_T^+$.
\end{proof}

A similar proof gives
\begin{lemma}\label{skovshoved6} $(1-e^{\pi}) G(-) \subseteq G(-)$ and $(1-e^{\pi}) G(-)_T^+ \subseteq G(-)_T^+$.
\end{lemma}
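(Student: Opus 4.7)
The plan is to mirror the proof of Lemma \ref{holbaekx} with the appropriate sign changes, taking care that in the minus case we use $\mathbb L_-$ (the \emph{minimum} over the support) and the region $\pi^{-1}((-\infty,-T])$ on which $e^{\pi(s)} < 1$.

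First, for the inclusion $(1-e^{\pi})G(-) \subseteq G(-)$: since $G(-)$ consists of Laurent polynomials in $e^\pi$ with coefficients in $A(-)$, and multiplication by $1-e^\pi$ sends such a polynomial to another such polynomial (it just shifts the upper part of the support up by one), this is immediate from the definition.

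For the inclusion $(1-e^\pi)G(-)_T^+ \subseteq G(-)_T^+$, I would take $p \in G(-)_T^+ \setminus \{0\}$ and write $p = \sum_{n \in \mathbb Z} p_n e^{n\pi}$ as in \eqref{holbaek3}. Since $\supp((1-e^{\pi})p) = \supp p$, the product $(1-e^{\pi})p$ again lies outside $\mathcal A_c(S,\pi)$. The computation
\[
(1-e^\pi)p = \sum_{n \in \mathbb Z}(p_n - p_{n-1}) e^{n\pi}
\]
gives
\[
\mathbb L_-((1-e^\pi)p) = \min\{n : R_-(p_n - p_{n-1}) \neq 0\} = \min\{n : R_-(p_n) \neq 0\} = \mathbb L_-(p),
\]
where the middle equality uses that if $m = \mathbb L_-(p)$ then $R_-(p_n) = 0$ for $n < m$ while $R_-(p_m) \neq 0$, so the difference $R_-(p_m - p_{m-1}) = R_-(p_m) \neq 0$ and $R_-(p_n - p_{n-1}) = 0$ for $n < m$. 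Consequently $R_-(e^{-\mathbb L_-(p)\pi}(1-e^\pi)p) = R_-(e^{-\mathbb L_-(p)\pi}p)$, which is strictly positive on $\partial S_-$ by the hypothesis on $p$ and Lemma \ref{21-07-23}.

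Finally, for $s \in \pi^{-1}((-\infty,-T])$ we have $\pi(s) \leq -T < 0$, so $1 - e^{\pi(s)} > 0$, and since $p(s) > 0$ on this region (by Lemma \ref{21-07-23} applied to $p$) we conclude $(1-e^{\pi(s)})p(s) > 0$ there. Invoking the implication (c)$\Rightarrow$(a) of Lemma \ref{21-07-23} then yields $(1-e^\pi)p \in G(-)_T^+$. No step here is a serious obstacle; the only thing to watch is the sign convention, namely that $\mathbb L_-$ is a minimum and the relevant region is on the negative side of $\pi$, which is precisely what makes $1 - e^\pi$ positive there just as $1-e^{-\pi}$ was positive on $\pi^{-1}([T,\infty))$ in Lemma \ref{holbaekx}.
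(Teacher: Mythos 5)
Your proof is correct and is exactly the argument the paper intends: the paper proves Lemma \ref{skovshoved6} by simply remarking that the proof of Lemma \ref{holbaekx} adapts, and your write-up carries out that adaptation faithfully, with the coefficient shift $(1-e^{\pi})p = \sum_{n}(p_n - p_{n-1})e^{n\pi}$, the identity $\mathbb L_-((1-e^{\pi})p) = \mathbb L_-(p)$, and the positivity of $1-e^{\pi(s)}$ on $\pi^{-1}((-\infty,-T])$ all handled correctly. Nothing to fix.
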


From the proofs of the last lemmas we take the observation that
\begin{equation}\label{holbaek1xx}
\mathbb L_\pm((1-e^{\mp \pi})p) = \mathbb L_\pm(p)
\end{equation}
for all $p \in G(\pm)$. Define $\varphi_\pm : G(\pm) \to G(\pm)$ by $\varphi_\pm(p) = (1-e^{\mp \pi})p$. Thanks to Lemma \ref{skovshoved6} and Lemma \ref{holbaekx} we can consider the inductive limit(s)  
$$
(\mathcal G(\pm),\mathcal G(\pm)_T^+)
$$ 
in the category of ordered groups of the sequence(s)
\begin{equation*}
\begin{xymatrix}{
(G(\pm),G(\pm)_T^+) \ar[r]^-{\varphi_\pm} &   (G(\pm),G(\pm)_T^+) \ar[r]^-{\varphi_\pm} &  (G(\pm),G(\pm)_T^+) \ar[r]^-{\varphi_\pm} &  \cdots .}
\end{xymatrix}
\end{equation*}
We note that $\mathcal G(\pm)$ can be identified with the additive group 
$$
A(\pm)[e^\pi, (1-e^{-\pi})^{-1}]
$$ 
of Laurent polynomials in $e^\pi$ and $(1-e^{-\pi})^{-1}$ with coefficients from $A(\pm)$. That is, the elements of $\mathcal G(\pm)$ are functions on $S \backslash \pi^{-1}(0)$ of the form
$$
\sum_{k,l \in \mathbb Z} a_{kl} e^{k \pi}(1-e^{-\pi})^l ,
$$
where the $a_{kl} \in A(\pm)$ are non-zero only for finitely many $k,l$. For every element $f \in \mathcal G(\pm)$, the products
$$
(1-e^{\mp \pi})^N f
$$
are in $G(\pm)$ for all large $N \in \mathbb N$. Thanks to \eqref{holbaek1xx} we can define $\mathbb L_\pm(f)$ for $f \in \mathcal G(\pm)$ by
$$
\mathbb L_\pm(f) := \mathbb L_\pm((1-e^{\mp \pi})^N f) 
$$
for any $N\in \mathbb N$ such that $(1-e^{\mp \pi})^N f \in G(\pm)$. The function $e^{-\mathbb L_\pm(f)\pi(s)}f(s)$ is then bounded for $\pm \pi(s) \geq 1$ but it may be unbounded near $\pi^{-1}(0)$. In order to define $R_\pm$ on functions of this type, set
$$
R_\pm(h):= R_\pm(\psi^\pm_1\circ \pi h)
$$
when $\psi^\pm_1\circ \pi h \in C_{b,\mathbb R}(S)$. This is consistent with the meaning of the symbol $R_\pm$ as we have used it so far. Then $\mathcal G(\pm)_T^+$ consists of $0$ and the elements $g \in \mathcal G(\pm)$ with the properties that
\begin{itemize}
\item[(1)] $R_\pm(e^{-\mathbb L_\pm(g)\pi}g)$ is strictly positive on $\partial S_\pm$, and
\item[(2)] $g(s) > 0$ when $\pm \pi(s) \geq T$.
\end{itemize}

\begin{lemma}\label{skovshoved9} $(\mathcal G(\pm),\mathcal G(\pm)_T^+)$ are strong Riesz groups. 
\end{lemma}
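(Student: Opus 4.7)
The plan is to show that the strong Riesz property descends from the constant sequence of strong Riesz groups $(G(\pm), G(\pm)_T^+)$ (established in Lemma \ref{skovshoved5}) to the inductive limit. Since the connecting map $\varphi_\pm$ is a positive group homomorphism by Lemmas \ref{holbaekx} and \ref{skovshoved6}, the pair $(\mathcal G(\pm), \mathcal G(\pm)_T^+)$ is a well-defined ordered abelian group, with positive cone consisting of those equivalence classes having a positive representative at some finite stage. The general strategy is to verify each defining property of a strong Riesz group (by Corollary 1.2 of \cite{EHS}, this amounts to Riesz interpolation, unperforation, primeness, and absence of minimal elements) by lifting finitely many elements of $\mathcal G(\pm)$ to a common stage $G(\pm)$ and invoking the corresponding property there.

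Concretely, for strong Riesz interpolation in the plus case, given $f^i, g^j \in \mathcal G(+)$ with $f^i < g^j$ strictly for $i,j \in \{1,2\}$, I would pick representatives at a common stage and then push forward by $\varphi_+$ enough times so that all four differences $\tilde g^j - \tilde f^i$ lie in $G(+)_T^+ \backslash \{0\}$ there. Lemma \ref{skovshoved5} produces $\tilde h \in G(+)$ with $\tilde f^i < \tilde h < \tilde g^j$ in $(G(+), G(+)_T^+)$, whose class in $\mathcal G(+)$ is the desired interpolant. Unperforation transfers in the same way: if $n[f] \geq 0$, then $n \varphi_+^k(f) \in G(+)_T^+$ at some stage $k$, so unperforation of $(G(+), G(+)_T^+)$ gives $\varphi_+^k(f) \in G(+)_T^+$, and hence $[f] \geq 0$. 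Primeness is analogous, lifting two nonzero positive classes to the same stage and applying primeness of $G(+)$.

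The one subtlety I anticipate is the "no minimal elements" property. Given $[h] > 0$ in $\mathcal G(+)$ with representative $h \in G(+)_T^+$, the strong Riesz property at that stage produces $0 < h'' < h$ in $(G(+), G(+)_T^+)$; I must check that $[h''] \neq 0$ in the limit, i.e.\ that no iterate $\varphi_+^k(h'') = (1 - e^{-\pi})^k h''$ is identically zero. This holds because $1 - e^{-\pi}$ vanishes on $S$ only along the proper closed subset $\pi^{-1}(0)$, so $\varphi_+$ is injective on $G(+) \subseteq \mathcal A(S,\pi)$. The minus case is identical after substituting $1 - e^{\pi}$ for $1 - e^{-\pi}$. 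Beyond this point, I expect the proof to be mostly bookkeeping: carefully tracking when a representative belongs to the positive cone as one passes along the inductive sequence, and confirming that the positive cone of the limit really coincides with the classes admitting positive representatives.
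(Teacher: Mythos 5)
Your proposal is correct and follows essentially the same route as the paper: the paper likewise observes that the dimension-group axioms pass to inductive limits and then proves strong interpolation exactly as you describe, multiplying by $(1-e^{\mp \pi})^N$ to place the four strict inequalities in $(G(\pm),G(\pm)_T^+)$, interpolating there via Lemma \ref{skovshoved5}, and setting $h' := (1-e^{\mp \pi})^{-N}h$ inside the concrete realization $A(\pm)[e^\pi,(1-e^{-\pi})^{-1}]$ of the limit. One minor caveat: your injectivity claim for $\varphi_\pm$ is both unneeded and slightly shaky (if $\pi^{-1}(0)$ has non-empty interior, multiplication by $1-e^{\mp\pi}$ can kill functions supported there); the relevant element survives to the limit simply because any member of $G(\pm)_T^+\backslash\{0\}$ is strictly positive on $\pi^{-1}([T,\infty))$, respectively $\pi^{-1}((-\infty,-T])$, so no power of $(1-e^{\mp\pi})$ annihilates it -- and the paper avoids your detour through Corollary 1.2 of \cite{EHS} altogether.
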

\begin{proof}
Since inductive limits preserve the properties defining a dimension group it follows from Lemma \ref{skovshoved5} that $(\mathcal G(\pm),\mathcal G(\pm)_T^+)$ is a dimension group. To establish the strong Riesz interpolation property, consider elements $f_i,g_j, \ i,j \in \{1,2\}$, such that $f_i < g_j$ in $(\mathcal G(\pm),\mathcal G(\pm)_T^+)$ for all $i,j$. There then an $N \in \mathbb N$ such that $(1-e^{\mp \pi})^Nf_i < (1-e^{\mp \pi})^Ng_j$ in $(G(\pm),G(\pm)_T^+)$ for all $i,j$. Since $(G(\pm),G(\pm)_T^+)$ is a strong Riesz group by Lemma \ref{skovshoved5} there is an element $h \in G(\pm)$ such that 
 $(1-e^{\mp \pi})^Nf_i < h < (1-e^{\mp \pi})^Ng_j$ in $(G(\pm),G(\pm)_T^+)$ for all $i,j$. Set $h' := (1-e^{\mp \pi})^{-N}h$. Then  $f_i < h' < g_j$ in $(\mathcal G(\pm),\mathcal G(\pm)_T^+)$ for all $i,j$.
\end{proof}

\bigskip

Given a Choquet simplex $\Delta$, as usual denote by $\Aff \Delta$ the set of real-valued continuous and affine functions on $\Delta$. Fix a proper simplex bundle $(S,\pi)$ with $\pi^{-1}(0)$ non-empty. Let $H$ be a countable torsion free abelian group and $\theta : H \to \Aff \pi^{-1}(0)$ a homomorphism. We assume
\begin{itemize}
\item[(1)] there is an element $u \in H$ such that $\theta(u) = 1$, and
\item[(2)] $\theta(H)$ is dense in $\Aff \pi^{-1}(0)$.
\end{itemize}
Set 
$$
H^+ := \left\{h \in H: \ \theta(h)(x) > 0 \ \ \forall x \in \pi^{-1}(0) \right\} \cup \{0\} \ .
$$
Then $(H,H^+)$ is a simple dimension group; cf. \cite{EHS}.

It follows from Lemma 4.4 in \cite{ET} that the map $r : \mathcal A_c(S,\pi) \to \Aff \pi^{-1}(0)$ given by restriction is surjective and we can therefore choose a linear map $L :  \Aff \pi^{-1}(0) \to \mathcal A_c(S,\pi)$ such that $r\circ L = \id$. We arrange, as we can, that $L(1)(s) = 1$ for all $s \in S$ with $|\pi(s)| \leq 1$. Define $\hat{L} : \bigoplus_{\mathbb Z} H \to \mathcal A(S,\pi)$ by
$$
\hat{L}\left((h_n)_{n \in \mathbb Z}\right)(s) := \sum_{n \in \mathbb Z} L(\theta(h_n))(s)e^{n\pi(s)} \ .
$$ 
For $k \in \mathbb N$ consider the countable subgroup of $ \mathcal A(S,\pi)$
$$
G_k :=  \mathcal G(-)\psi^-_k \circ \pi + \hat{L}(\bigoplus_{\mathbb Z} H ) \psi^0_k \circ \pi  + \mathcal G(+) \psi^+_k \circ \pi \ ,
$$
where the $\psi_k^0,\psi^\pm_k$ are the functions from (A)-(D) above.
 Let $\mathcal A_{00}(S,\pi)$ denote the set of elements $f$ from $\mathcal A(S,\pi)$ for which $\supp f$ is compact and contained in $S \backslash \pi^{-1}(0)$. Since the topology of $S$ is second countable we can choose a countable subgroup $G_{00}$ of $\mathcal A_{00}(S,\pi)$ with the following density property:

\begin{TOM}\label{07-09-21} For all $N \in \mathbb N$, all $\epsilon > 0$ and all $f \in \mathcal A_{00}(S,\pi)$ with 
$\supp f \subseteq \pi^{-1}(]-N,N[ \backslash \{0\})$, there is $g \in G_{00}$ such that 
$$\sup_{s \in S} |f(s) -g(s)| < \epsilon
$$ 
and $\supp g \subseteq \pi^{-1}(]-N,N[ \backslash \{0\})$.
\end{TOM}

When $f_1 \in \hat{L}(\bigoplus_{\mathbb Z} H ) $ and $f^\pm_2 \in  \mathcal G(\pm)$, the difference between 
$$
f^-_2 \psi^-_k \circ \pi + f_1\psi^0_k \circ \pi  + f^+_2\psi^+_k \circ \pi
$$ 
and 
$$
f^-_2 \psi^-_{k+1} \circ \pi + f_1\psi^0_{k+1} \circ \pi  + f_2^+\psi^+_{k+1} \circ \pi
$$ 
is an element of $\mathcal A_{00}(S,\pi)$, and so by enlarging $G_{00}$ we can ensure that
\begin{equation*}\label{01-09-21c}
G_k + G_{00} \subseteq G_{k+1} + G_{00} \ .
\end{equation*}
Let $\rho_0 \in \Aut \mathcal A(S,\pi)$ be defined by
$$
\rho_0(f)(s) := e^{-\pi(s)}f(s) \ .
$$
Since $\rho_0(\mathcal A_{00}) =\mathcal A_{00}$ and since 
$(1-e^{-\pi})^{-1}\mathcal A_{00}(S,\pi) \subseteq \mathcal A_{00}(S,\pi)$,
we can enlarge $G_{00}$ further to achieve that 
\begin{equation}\label{07-09-21d}
\rho_0(G_{00}) = G_{00}
\end{equation}
and that 
\begin{equation}\label{07-09-21e}
(\id - \rho_0)(G_{00}) = G_{00} .
\end{equation}
We define
$$
G := \bigcup_{k=1}^\infty (G_k + G_{00}) .
$$
Let $\sigma \in \Aut \left( \bigoplus_\mathbb Z H\right)$ denote the shift:
$$
\sigma\left((h_n)_{n \in \mathbb Z}\right) = \left( h_{n+1}\right)_{n \in \mathbb Z} \ .
$$
Then $\rho_0 \circ \hat{L} = \hat{L} \circ \sigma$, which implies that $\rho_0(G_k) = G_k$ and hence 
$$\rho_0 (G) = G .
$$ 

For every element $f$ of $G$ there is an $R > 0$ and elements $f^\pm \in \mathcal G(\pm)$ such that $f(s) = f^-(s)$ when $\pi(s) \leq -R$ and $f(s) = f^+(s)$ when $\pi(s) \geq R$. Set
$$
\mathbb L_{\pm}(f) := \mathbb L_\pm(f^\pm).
$$ 
This is well-defined, i.e. depends only on $f$, since $\psi^\pm_1\circ \pi f^\pm$ is unique up to elements of $\mathcal A_c(S,\pi)$. We define $G^+$ to be the set which besides the zero function consists of the elements $f \in G$ for which
\begin{itemize}
\item[(1)] $f(s) > 0$ for all $s \in S$, and
\item[(2)] $R_\pm(e^{-\mathbb L_\pm(f)\pi}f^{\pm})$ are strictly  positive on $\partial S_\pm$.
\end{itemize}

\begin{lemma}\label{01-09-21} The pair $(G,G^+)$ has the following properties.
\begin{itemize}
\item[(0)] $G^+ + G^+ = G^+$.
\item[(1)] $G^+ \cap (-G^+) = \{0\}$.
\item[(2)] $G = G^+ - G^+$.
\item[(3)] $f\in G, \ n \in \mathbb N, \ nf \in G^+ \Rightarrow f \in G^+$.
\item[(4)] $(G,G^+)$ has the strong Riesz interpolation property.
\end{itemize}
\end{lemma}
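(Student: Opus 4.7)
My plan is to treat properties (0), (1), (3) as essentially bookkeeping consequences of the definitions together with what has already been shown for $(\mathcal{G}(\pm), \mathcal{G}(\pm)_T^+)$ in Lemma \ref{09-01-23x}, and then to concentrate on (2) and (4). For (1) observe that if $f \in G^+ \cap (-G^+) \setminus \{0\}$ then $f(s) > 0$ and $-f(s) > 0$ simultaneously, which is absurd. For (0), the ``wing parts'' $f^\pm$ of a sum $f + f'$ are the sums of the corresponding wing parts, so closure under addition reduces to the analogous statement for $\mathcal{G}(\pm)^+_T$, together with pointwise positivity which is preserved manifestly. Property (3) is immediate since $\mathbb{L}_\pm(nf) = \mathbb{L}_\pm(f)$ and $R_\pm$ is linear.

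For (2), given $f \in G$ expressed as $f = f^-\psi^-_k\circ\pi + f_1\psi^0_k\circ\pi + f^+\psi^+_k\circ\pi + g$ with $f^\pm \in \mathcal{G}(\pm)$, $f_1 \in \hat{L}(\oplus_\mathbb{Z}H)$ and $g \in G_{00}$, I would construct a dominating positive element. Writing $f^\pm = p^\pm - q^\pm$ with $p^\pm, q^\pm \in \mathcal{G}(\pm)^+_T$ via (2) of Lemma \ref{09-01-23x}, inflating each of $p^\pm,q^\pm$ by adding a suitable positive element of the form appearing in the proof of Lemma \ref{09-01-23x}(2), and adjusting the middle coefficient using the unit $u \in H$ with $\theta(u) = 1$ (so that integer multiples of $\hat L$ applied to $u$ contribute constant functions on $\pi^{-1}(0)$), one can arrange both of the resulting elements to lie in $G^+$ while their difference equals $f$; the compactly supported $G_{00}$-term is absorbed by choosing the constants large enough and invoking Property \ref{07-09-21} to keep everything in $G$.

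The main effort goes into (4). I would follow the three-region strategy used in Lemma \ref{skovshoved5}. Given $f^i < g^j$ in $(G,G^+)$ for $i,j \in \{1,2\}$, restriction to the far-right (respectively far-left) region lies in $\mathcal{G}(+)$ (resp.\ $\mathcal{G}(-)$), where the strong Riesz interpolation property holds by Lemma \ref{skovshoved9}; this yields interpolants $h^\pm \in \mathcal{G}(\pm)$, and a large $N \in \mathbb{N}$ such that on $\pi^{-1}(\pm[N,\infty))$ one has $f^i(s) < h^\pm(s) < g^j(s)$ with strictly positive $R_\pm$-values. On the compact middle region $\pi^{-1}([-N-1,N+1])$ the inequalities $f^i < g^j$ hold between continuous fiberwise-affine functions, so Lemma 4.4 of \cite{ET} produces an interpolant $w \in \mathcal{A}_c(S,\pi)$ satisfying $f^i(s) < w(s) < g^j(s)$ on that region, and Property \ref{07-09-21} allows us to replace $w$ by an arbitrarily close element of $G_{00}$ while preserving the strict inequalities. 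The glued function
\begin{equation*}
h := (1-\varphi_N(-\pi))\, h^- \;+\; \varphi_N(-\pi)\varphi_N(\pi)\, w \;+\; (1-\varphi_N(\pi))\, h^+
\end{equation*}
will then lie in some $G_k + G_{00}$ after adjusting $k$ so that the supports of the cutoffs match the $\psi^\pm_k$-supports; by construction its wing parts agree with $h^\pm$, so it satisfies the conditions defining $G^+$ on $\pi^{-1}(\pm[N,\infty))$, and pointwise strict inequalities on the middle region imply $f^i < h < g^j$ throughout.

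The hardest part will be the final gluing: arranging that $h$ genuinely lies in $G$ (not merely in $\mathcal{A}(S,\pi)$) while simultaneously keeping the strict positivity of $R_\pm(e^{-\mathbb{L}_\pm(h-f^i)\pi}(h-f^i))$ and $R_\pm(e^{-\mathbb{L}_\pm(g^j-h)\pi}(g^j-h))$ intact, because the gluing by $\varphi_N$ alters the finite-support part but must not disturb the leading asymptotic behavior on either wing. This is where choosing $N$ sufficiently large so that the wing strict inequalities from Lemma \ref{skovshoved9} dominate any perturbation coming from the $G_{00}$-approximation will be essential, and the relations $\mathbb{L}_\pm(h - f^i) = \mathbb{L}_\pm(h^\pm - f^{i,\pm})$ away from the gluing region will be used just as in the proof of Lemma \ref{skovshoved5}.
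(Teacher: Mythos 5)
Your treatments of (0), (1), (3) are fine, and your plan for (2) is a workable variant of the paper's (the paper simply dominates $f$ by one explicit element $h = e^{-n\pi}\psi^-_k\circ\pi + \psi^0_k\circ\pi + e^{n\pi}\psi^+_k\circ\pi$, writes $f = Kh - (Kh-f)$, and checks $h \in G$ via $\psi^0_k\circ\pi = L(\theta(u))\psi^0_k\circ\pi$ and condition (f) of Lemma \ref{helsingor}). Your architecture for (4) is also the paper's: wing interpolants from Lemma \ref{skovshoved9}, a middle interpolant from Lemma 4.4 of \cite{ET}, gluing, and a $G_{00}$-approximation. But there is a genuine gap at the zero fiber. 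Every element of $G$ agrees on a neighbourhood of $\pi^{-1}(0)$ with $\hat{L}(\xi)$ for some $\xi \in \oplus_{\mathbb Z} H$, because the $\psi^\pm_k$-terms vanish there and elements of $G_{00}$ have support in $S \setminus \pi^{-1}(0)$ by construction; in particular the restriction of any $h \in G$ to $\pi^{-1}(0)$ equals $\theta(\Sigma(\xi))$ and so lies in the \emph{countable} subgroup $\theta(H) \subseteq \Aff \pi^{-1}(0)$. Your glued function equals the generic interpolant $w$ near $\pi^{-1}(0)$, and nothing in Lemma 4.4 of \cite{ET} forces $w|_{\pi^{-1}(0)}$ into $\theta(H)$, so your $h$ is generally not in $G$. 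Nor can Property \ref{07-09-21} repair this as you claim: it only approximates functions whose support is already contained in $\pi^{-1}(]-N,N[\,\backslash \{0\})$, so you cannot replace $w$ by a $G_{00}$-element unless $w$ already vanishes near the zero fiber --- and if you force that, your $h$ vanishes on a neighbourhood of $\pi^{-1}(0)$, where the required strict inequalities $f_i(x) < h(x) < g_j(x)$ then fail in general.

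The missing idea is a preliminary interpolation inside $\theta(H)$ itself. Since $\theta(H)$ is dense in $\Aff \pi^{-1}(0)$, it has the Riesz interpolation property with respect to the strict order (Lemma 3.1 of \cite{EHS}), so the paper first chooses $h_0 \in \theta(H)$ with $f_i(x) < h_0(x) < g_j(x)$ for all $x \in \pi^{-1}(0)$, and then invokes statement (2) of Lemma 4.4 in \cite{ET} to produce a global interpolant that agrees with $h^\pm$ on the wings \emph{and} with $L(h_0)$ on $\pi^{-1}(0)$. Only the difference between the glued function and $L(h_0)\psi^0_k\circ\pi + h^-\psi^-_k\circ\pi + h^+\psi^+_k\circ\pi$ --- which is supported in $\pi^{-1}(]-R,R[\,\backslash \{0\})$ --- is approximated by an element $g' \in G_{00}$, and the final interpolant $h := g' + L(h_0)\psi^0_k\circ\pi + h^-\psi^-_k\circ\pi + h^+\psi^+_k\circ\pi$ lies in $G$ with the correct zero-fiber values. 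Your worry about preserving the leading asymptotics is then resolved exactly as you anticipated: $h$ agrees with $h^\pm$ far out, so $\mathbb L_\pm(h-f_i) = \mathbb L_\pm(h^\pm - f^\pm_i)$ and the strict positivity of the $R_\pm$-data follows from $f^\pm_i < h^\pm < g^\pm_j$ in $(\mathcal G(\pm), \mathcal G(\pm)^+_R)$.
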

\begin{proof} (1) and (3) are obvious, and (0) is proved as in the proof of Lemma \ref{09-01-23x}. (2): Let $f \in G$. Then $f \in G_k+ G_{00}$ for some $k \in \mathbb N$, and we can write
$$
f = f^-\psi^-_k\circ \pi + f_0\psi^0_k\circ \pi + f^+\psi^0_k \circ \pi + g
$$
where $f^\pm \in \mathcal G(\pm), \ f_0 \in \hat{L}(\bigoplus_{\mathbb Z} H)$ and $g \in G_{00}$. By definition of $\mathcal G(\pm)$ there are $m \in \mathbb N$ and $M > 0$ such that
\begin{equation}\label{19-07-23a}
f^+(s) \leq e^{m\pi(s)} \ \ \forall s \in \pi^{-1}([M,\infty[) \ 
\end{equation}
and

\begin{equation}\label{19-07-23}
f^-(s) \leq e^{-m\pi(s)} \ \ \forall s \in \pi^{-1}(]-\infty, -M]) .
\end{equation}
Choose $n \in \mathbb N$ such that $n \geq m, \ -n < \mathbb L_-(f)$ and $n > \mathbb L_+(f)$, and set
\begin{equation}\label{19-07-23b}
h:= e^{-n\pi}\psi_k^- \circ \pi + \psi^0_k \circ \pi + e^{n\pi}\psi^+_k \circ \pi .
\end{equation}
Set $c:= \psi^+_{2k+1} \circ \pi - \psi^+_1 \circ \pi$ and note that $c \in \mathcal C$ by (f) of Lemma \ref{helsingor}. Since $\psi^+_{2k+1} \psi^+_k = \psi^+_k$ we have 
$$
 e^{n\pi}\psi^+_k \circ \pi = \left(e^{n\pi} c +  e^{n\pi}\psi^+_1 \circ \pi\right) \psi^+_k \circ \pi \in \mathcal G(+)\psi^+_k \circ \pi .
 $$
Similarly, 
$$
 e^{-n\pi}\psi_k^- \circ \pi \in \mathcal G(-)\psi^-_k\circ \pi .
 $$
 Since $ \psi^0_k \circ \pi = L(\theta(u)) \psi^0_k \circ \pi$ we conclude that $h \in G$. By using that $f_0 \psi^0_k \circ \pi$ and $g$ are compactly supported in combination with the upper bounds \eqref{19-07-23} and \eqref{19-07-23a} we find a $K \in \mathbb N$ such that
$$
f(s) + 1 \leq K h(s) \ \ \forall s \in S .
$$
Note that $\mathbb L_\pm( K h) = \mathbb L_\pm(Kh -f) = \pm n$ and that
$$
R_\pm(e^{-\mathbb L_\pm(K h-f)\pi}(Kh-f)^\pm) =  R_\pm(e^{\mp n\pi} Kh^\pm) = K > 0.
$$
Since $Kh(s) - f(s) \geq 1$ and $Kh(s) \geq K$ for all $s \in S$ it follows that $Kh-f$ and $Kh$ are both in $G^+$. Thus $f =  Kh -(Kh-f) \in G^+- G^+$.

(4): Let $f_i < g_j, \ i,j \in \{1,2\}$, in $(G,G^+)$
. Since $\theta(H)$ has the Riesz interpolation property for the strict order by Lemma 3.1 in \cite{EHS}, there is $h_0 \in \theta(H)$ such that $f_i(x) < h_0(x) < g_j(x)$ for all $i,j$ and all $x \in \pi^{-1}(0)$. Note that there is a $R \in \mathbb N$ such that $f_i(s) =f^+_i(s), \ g_j(s) = g^+_j(s)$ for all $s \in S$ with $\pi(s) \geq R$, and   $f_i(s) =f^-_i(s), \ g_j(s) = g^-_j(s)$ for all $s \in S$ with $\pi(s) \leq -R$ and all $i,j$. Then $f^\pm_i < g^\pm_j$ in $(\mathcal G(\pm), \mathcal G(\pm)^+_R)$. Since $(\mathcal G(\pm), \mathcal G(\pm)^+_R)$ are strong Riesz groups by Lemma \ref{skovshoved9} there are elements $h^\pm \in \mathcal G(\pm)$ such that $f^\pm_i < h^\pm < g^\pm_j$ in $(\mathcal G(\pm), \mathcal G(\pm)^+_R)$.  Having $h^\pm$ and $R$ we use the statement (2) of Lemma 4.4 in \cite{ET} to find $H\in \mathcal A(S,\pi)$ such that $H(s) = h^-(s)$ when $\pi(s) \leq -R$, $ H(s) = h^+(s)$ when $\pi(s) \geq R$, $H(x) = L(h_0)(x)$ when $x \in \pi^{-1}(0)$, and
$
f_i(s) < H(s) < g_j(s)$
for all $s \in S$ and all $i,j$. Set
$$
H' := H\psi^-_k \circ \pi  + L(h_0) \psi^0_k\circ \pi  + H \psi^+_k\circ \pi   .
$$
If $k$ is large enough we have 
$$
f_i(s) < H'(s) < g_j(s)
$$
for all $s \in S$ and all $i,j$. Set 
$$
H'' := H' - L(h_0)\psi^0_k \circ \pi  - h^-\psi^-_k\circ \pi  - h^+\psi^+_k\circ \pi   ,
$$
and note that $\sup H'' \subseteq \pi^{-1}(]-R,R[ \backslash \{0\})$. Let $\delta > 0$ be given, smaller than $g_j(s) - H'(s)$ and $H'(s) -f_i(s)$ for all $i,j$ and all $s \in \pi^{-1}([-R,R])$.
By Property \ref{07-09-21} there is an element $g' \in G_{00}$ such that 
$$
\supp g'  \subseteq \pi^{-1}(]-R, R[ \backslash \{0\}) \ 
$$
and $\sup_{s \in S}\left|g'(s) - H''(s)\right| < \frac{\delta}{2}$. Set
$$
h := g' +  L(h_0)\psi^0_k \circ \pi + h^-\psi^-_k\circ \pi + h^+\psi^+_k\circ \pi ,
$$
and note that $h \in G$. Furthermore, $f_i(x) < h(x) < g_j(x)$ for all $i,j$ and all $x \in S$. Since $h(s) = h^+(s)$ for all $s \in S$ with $\pi(s)$ large enough, we have that $\mathbb L_+(h-f_i) = \mathbb L_+(h^+-f^+_i)$ and 
$$
R_+(e^{-\mathbb L_+(h-f_i)\pi}(h-f_i)^+) = R_+(e^{-\mathbb L_+(h^+-f^+_i)\pi}(h^+-f^+_i))
$$
is strictly positive since $f_i^+ < h^+_i$ in $(\mathcal G(\pm), \mathcal G(\pm)^+_R)$. It follows in the same way that 
$R_-(e^{-\mathbb L_-(h-f_i)\pi}(h-f_i)^-), \ R_+(e^{-\mathbb L_+(g_j-h)\pi}(g^j-h)^+)$, and $R_-(e^{-\mathbb L_-(g_j-h)\pi}(g^j-h)^-)$ 
are all strictly positive, and we conclude that $f_i < h < g_j$ in $(G,G^+)$. 

\end{proof}

Consider the subset $\Gamma$ of $\left( \bigoplus_\mathbb Z H\right) \oplus G$ consisting of the elements $(\xi, g)  \in \left( \bigoplus_\mathbb Z H\right) \oplus G$ with the property that there is an $\epsilon > 0$ such that
\begin{equation}\label{06-09-21d}
\hat{L}(\xi)(s) = g(s) \ \ \forall s \in \pi^{-1}(]-\epsilon,\epsilon[) \ .
\end{equation}
$\Gamma$ is a subgroup of $\left(\bigoplus_\mathbb Z H\right) \oplus G$.

\begin{lemma}\label{aabenraa} The projection $\Gamma \to G$ is surjective.
\end{lemma}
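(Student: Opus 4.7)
The plan is to unwind the definitions: any $g \in G$ lies in $G_k + G_{00}$ for some $k$, and by the bump-function decomposition of $G_k$ we should be able to read off a candidate $\xi \in \bigoplus_\mathbb Z H$ directly from the ``middle'' component.

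More precisely, given $g \in G$, first fix $k \in \mathbb N$ so that $g \in G_k + G_{00}$ and write
\begin{equation*}
g \;=\; f^{-}\,\psi^{-}_k\!\circ\pi \;+\; \hat L(\xi)\,\psi^{0}_k\!\circ\pi \;+\; f^{+}\,\psi^{+}_k\!\circ\pi \;+\; g_0,
\end{equation*}
where $f^{\pm} \in \mathcal G(\pm)$, $\xi \in \bigoplus_\mathbb Z H$, and $g_0 \in G_{00}$. I claim this same $\xi$ works. To verify this I would choose $\epsilon > 0$ small enough so that the following three things hold simultaneously: (i) $\psi^{0}_k \equiv 1$ on $]-\epsilon,\epsilon[$, which is possible by (A) once $\epsilon \leq \frac{1}{2k}$; (ii) $\psi^{\pm}_k \equiv 0$ on $]-\epsilon,\epsilon[$, which follows because by (D) we have $\psi^{\pm}_k = 1 - \psi^{0}_k - \psi^{\mp}_k$, and both $\psi^{0}_k$ and $\psi^{\mp}_k$ are equal to $1$ respectively $0$ on a neighbourhood of $0$; (iii) $g_0$ vanishes on $\pi^{-1}(]-\epsilon,\epsilon[)$.

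The only point that requires a short argument is (iii). Since $g_0 \in G_{00} \subseteq \mathcal A_{00}(S,\pi)$, its support is compact and disjoint from $\pi^{-1}(0)$. Properness of $\pi$ is not even needed here: $\pi(\supp g_0)$ is a compact subset of $\mathbb R \setminus \{0\}$, so there is $\epsilon > 0$ with $[-\epsilon,\epsilon] \cap \pi(\supp g_0) = \emptyset$, hence $g_0$ vanishes on $\pi^{-1}(]-\epsilon,\epsilon[)$.

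Combining (i)--(iii), on $\pi^{-1}(]-\epsilon,\epsilon[)$ the decomposition collapses to $g(s) = \hat L(\xi)(s)$, which is exactly \eqref{06-09-21d}. Hence $(\xi,g) \in \Gamma$ and projects to $g$, so the projection $\Gamma \to G$ is surjective. No substantive obstacle is anticipated; the lemma is really a bookkeeping consequence of how $G_k$ and $G_{00}$ were designed.
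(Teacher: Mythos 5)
Your proof is correct and is essentially the paper's own argument, which simply observes that by the definition of $G$ any $g \in G$ agrees with $\hat L(\xi)$ on $\pi^{-1}\bigl(\left]-\tfrac{1}{2k},\tfrac{1}{2k}\right[\bigr)$ for the $\xi$ appearing in the middle term of the decomposition; you have merely written out the verification the paper leaves implicit. One small wording point: in step (ii) it is cleaner to deduce $\psi^{\pm}_k \equiv 0$ near $0$ directly from (D) together with the fact that all three functions take values in $[0,1]$ and $\psi^0_k \equiv 1$ there, rather than invoking the vanishing of $\psi^{\mp}_k$ near $0$, which is the same claim by symmetry.
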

\begin{proof} Let $g \in G$. By definition of $G$ there is an element $\xi  \in \bigoplus_{\mathbb Z}H$ and $k \in \mathbb N$ such that $g(s) = \hat{L}(\xi)(s)$ for $s \in \pi^{-1}(\left]-\frac{1}{2k},\frac{1}{2k}\right[)$.
\end{proof}

Set
$$
\Gamma^+ := \left\{  (\xi, g)  \in \Gamma : \ g \in G^+ \backslash \{0\} \right\} \cup \{0\} \ .
$$
By combining Lemma \ref{aabenraa} and Lemma \ref{01-09-21} above with Lemma 3.2 in \cite{EHS} we conclude that $(\Gamma,\Gamma^+)$ is a dimension group; a strong Riesz group, in fact.

 \subsubsection{Additional properties of the dimension group} 
 
Since $\hat{L} \circ \sigma = \rho_0 \circ \hat{L}$ we can define $\rho \in \Aut \Gamma$ by
$$
\rho := \sigma \oplus \rho_0.
$$
Note that $\rho\left(\Gamma^+\right) = \Gamma^+$, i.e. $\rho \in \Aut(\Gamma,\Gamma^+)$. Given any element $h \in H$ we denote by $h^{(0)}\in \oplus_\mathbb Z H$ the element given by $h^{(0)}_0 = h$ and $h^{(0)}_n = 0$ when $n \neq 0$. Applied to the element $u \in H$ we get $u^{(0)}\in \oplus_\mathbb Z H$. Note that the constant function $1$ is in $G^+$. Set 
$$
v := (u^{(0)},1)
$$
and note that $v \in \Gamma^+$.
 Define $\Sigma : \oplus_\mathbb Z H \to H$ by
$$
\Sigma((h_n)_{n \in \mathbb Z}) = \sum_{n \in \mathbb Z} h_n .
$$

\begin{lemma}\label{aabenraa1} $(\Gamma,\Gamma^+)$ has large denominators; that is for all $x \in \Gamma^+$ and $m \in \mathbb N$ there is an element $y \in \Gamma^+$ and an $n \in \mathbb N$ such that $my \leq x \leq ny$ in $(\Gamma,\Gamma^+)$. 
\end{lemma}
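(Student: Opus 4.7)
The plan is to construct $y = (\eta,h) \in \Gamma^+$ directly, assembling $h$ from three pieces (one near each of $-\infty$, $\pi^{-1}(0)$, $+\infty$) using divisibility available in each regime, patched by the partition of unity $\psi^-_k, \psi^0_k, \psi^+_k$ and corrected by an element of $G_{00}$. Write $x = (\xi,g)$ with $g \in G^+$, and let $g^\pm \in \mathcal G(\pm)$ represent $g$ near $\pm\infty$. Strict positivity of $g$ on $\pi^{-1}(0)$ forces $\Sigma(\xi) \in H^+ \setminus \{0\}$.

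For the $\pm\infty$ pieces, since $C_\pm$ is closed under division by $m+1$ (Lemma~\ref{helsingor}(c)), the element
\[
c_\pm := \tfrac{1}{m+1}R_\pm\bigl(e^{-\mathbb L_\pm(g^\pm)\pi}g^\pm\bigr)
\]
lies in $C_\pm^+$, and $h^\pm := (\psi_1^\pm\circ\pi)\,l_\pm(c_\pm)\cdot e^{\mathbb L_\pm(g^\pm)\pi}$ lies in $G(\pm) \subseteq \mathcal G(\pm)$ with $\mathbb L_\pm(h^\pm) = \mathbb L_\pm(g^\pm)$ and $R_\pm(e^{-\mathbb L_\pm(g^\pm)\pi}h^\pm) = c_\pm$ by Lemma~\ref{roervigx}. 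For the piece at $\pi^{-1}(0)$, use that $(H,H^+)$ has large denominators (being a simple dimension group with $\theta(H)$ dense in $\Aff \pi^{-1}(0)$): pick $\eta_0 \in H^+$ and $n_0 \in \mathbb N$ with $m\eta_0 \leq \Sigma(\xi) \leq n_0\eta_0$, and set $\eta := \eta_0^{(0)} \in \oplus_\mathbb Z H$. For $k, T \in \mathbb N$ sufficiently large (to be specified) and a correction $g' \in G_{00}$ with $\supp g' \subseteq K := \pi^{-1}([-T,-\tfrac{1}{2k}] \cup [\tfrac{1}{2k},T])$, define
\[
h := h^-\psi^-_k\circ\pi + \hat L(\eta)\psi^0_k\circ\pi + h^+\psi^+_k\circ\pi + g' \in G_k + G_{00} \subseteq G.
\]
Then $h = \hat L(\eta)$ on $\pi^{-1}([-\tfrac{1}{2k},\tfrac{1}{2k}])$ and $h = h^\pm$ on $\pi^{-1}(\mathbb R\setminus[-T,T])$, so $(\eta,h) \in \Gamma$.

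Take $n := n_0 + m + 1$. At $\pm\infty$ the leading-order computations yield $\mathbb L_\pm(g-mh) = \mathbb L_\pm(nh-g) = \mathbb L_\pm(g)$ with
\[
R_\pm\bigl(e^{-\mathbb L_\pm(g)\pi}(g-mh)^\pm\bigr) = \tfrac{1}{m+1}R_\pm\bigl(e^{-\mathbb L_\pm(g)\pi}g^\pm\bigr) > 0
\]
and
\[
R_\pm\bigl(e^{-\mathbb L_\pm(g)\pi}(nh-g)^\pm\bigr) = \bigl(\tfrac{n}{m+1}-1\bigr)R_\pm\bigl(e^{-\mathbb L_\pm(g)\pi}g^\pm\bigr) > 0
\]
on $\partial S_\pm$. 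At $\pi^{-1}(0)$, $\theta(\Sigma(\xi) - m\eta_0) > 0$ and $\theta(n\eta_0 - \Sigma(\xi)) > 0$ on the simplex $\pi^{-1}(0)$, extending by continuity to an open neighborhood. Choose $T$ so large that $g - mh > 0$ and $nh - g > 0$ hold pointwise on $\pi^{-1}(\mathbb R\setminus[-T,T])$ and on a neighborhood of $\pi^{-1}(0)$ containing $\pi^{-1}([-\tfrac{1}{2k},\tfrac{1}{2k}])$.

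The main obstacle is securing the simultaneous strict positivity of $g - mh$ and $nh - g$ on the compact middle region $K$. Writing $h_0 := h^-\psi^-_k\circ\pi + \hat L(\eta)\psi^0_k\circ\pi + h^+\psi^+_k\circ\pi$, it reduces to choosing $g' \in G_{00}$ with
\[
-h_0(s) < g'(s) < \tfrac{1}{m}g(s) - h_0(s) \quad \text{for all } s \in K,
\]
a band of strictly positive width on $K$ (pointwise because $g > 0$, and uniformly by compactness of $K$). Lemma~4.4 of \cite{ET} produces $w \in \mathcal A_c(S,\pi)$ with $\supp w \subseteq K$ sitting strictly inside this band with uniform margin; Property~\ref{07-09-21} then gives $g' \in G_{00}$ with $\supp g' \subseteq K$ and $\sup_S|g'-w|$ small enough to preserve both inequalities. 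Since $h > 0$ on $K$ and $K$ is compact, enlarging $n$ if necessary secures $nh - g > 0$ on $K$ as well. Hence $y = (\eta,h) \in \Gamma^+$ with $my \leq x \leq ny$ in $(\Gamma,\Gamma^+)$.
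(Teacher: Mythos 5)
Your proposal is correct in outline and shares the paper's overall architecture: reduce to $\Sigma(\xi)\in H^+\setminus\{0\}$, produce a ``divided'' element of $H^+$ for the middle region, divide the asymptotic data by $m+1$ using the rational divisibility from Lemma~\ref{helsingor}(c), patch the three pieces with $\psi^-_k,\psi^0_k,\psi^+_k$ and a correction from $G_{00}$ via Property~\ref{07-09-21}, and verify membership in $G^+$ through the leading coefficients at $\pm\infty$. You deviate from the paper in two places. First, for the tails the paper simply takes $\tfrac{1}{m+1}g^\pm\in\mathcal G(\pm)$, an exact rational multiple of $g^\pm$, which yields the pointwise inequalities $m\cdot\tfrac{1}{m+1}g^\pm<g^\pm<n\cdot\tfrac{1}{m+1}g^\pm$ for free wherever $g=g^\pm>0$; your monomial $h^\pm=(\psi^\pm_1\circ\pi)\,l_\pm(c_\pm)\,e^{\mathbb L_\pm(g^\pm)\pi}$ agrees with this only to leading order, which is why you need the extra step (correct, and in the spirit of Lemma~\ref{21-07-23}) that strict positivity of the boundary data forces the pointwise bounds for $\pm\pi(s)\geq T$ with $T$ large. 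Second, the paper obtains both inequalities simultaneously with a fixed $n$ by interpolating a single function $a$ with $\tfrac1n g<a<\tfrac1m g$ on all of $S$ and prescribed values on $\pi^{-1}(U)$ and on the tails (statement (2) of Lemma 4.4 in \cite{ET}); you interpolate only the one-sided band $0<h<\tfrac1m g$ on the compact middle region and then enlarge $n$ afterwards, which does work because $h>0$ there and the enlargement is harmless at infinity (where $h>0$ follows from $g/n<h<g/m$) and near $\pi^{-1}(0)$.

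Two slips need repair, both local. (i) ``Large denominators'' of $(H,H^+)$ only gives $m\eta_0\leq\Sigma(\xi)$; in the degenerate case $\Sigma(\xi)=m\eta_0$ your assertion $\theta(\Sigma(\xi)-m\eta_0)>0$ fails, $g-mh$ then vanishes on $\pi^{-1}(0)$ while not being identically zero, hence $g-mh\notin G^+$ and the conclusion $my\leq x$ fails for your $y$. Fix it by demanding the strict inequalities $m\eta_0<\Sigma(\xi)<n_0\eta_0$ directly from density of $\theta(H)$ in $\Aff\pi^{-1}(0)$, as the paper does, or by running your argument with $m+1$ in place of $m$. (ii) Property~\ref{07-09-21} only gives $\supp g'\subseteq\pi^{-1}(]-N,N[\,\backslash\{0\})$, not $\supp g'\subseteq K$; the possible spill-over into $\pi^{-1}(]0,\tfrac{1}{2k}[)$ and its mirror is harmless, but only if you additionally take $\delta$ smaller than the gaps $\hat L(\eta)$ and $\tfrac1m g-\hat L(\eta)$ on $\pi^{-1}([-\tfrac{1}{2k},\tfrac{1}{2k}])$ --- precisely the uniform $\delta$-control the paper imposes on all of $\pi^{-1}([-K,K])$. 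Note that $(\eta,h)\in\Gamma$ survives this weakening, since $\pi(\supp g')$ is compact and omits a neighborhood of $0$, so $h=\hat L(\eta)$ on $\pi^{-1}(]-\epsilon,\epsilon[)$ for some $\epsilon>0$.
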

\begin{proof} 

Let $ x=(\xi,g) \in \Gamma^+\backslash \{0\}$ and $m \in \mathbb N$ be given. Then 
$$
\theta(\Sigma(\xi))(x) = \hat{L}(\xi)(x) = g(x) > 0
$$
for all $x \in \pi^{-1}(0)$, implying that $\Sigma(\xi) \in H^+\backslash \{0\}$. Since $\theta(H)$ is dense in $\Aff \pi^{-1}(0)$ there is an element $b\in H^+$ such that $ mb< \Sigma(\xi)  < n b$ for some $n \in \mathbb N, \ n > m+2$. Since $L(\theta(\Sigma(\xi)))$ agrees with $g$ on $\pi^{-1}(0)$, there is a compact neighborhood $U$ of $0$ in $\mathbb R$ such that
$$
mL(\theta(b))(s) < g(s) < n L(\theta(b))(s)
$$
for all $s \in \pi^{-1}(U)$. There is also a $K \in \mathbb N$ such that $U \subseteq \ ]-K,K[$, $g(s) = g^-(s), \ s \leq -K$, and $g(s) = g^+(s),\ s \geq K$. It follows from (2) of Lemma 4.4 in \cite{ET} that there is an element $a \in \mathcal A(S,\pi)$ such that 
\begin{itemize}
\item[{}] $\frac{1}{n} g(s) < a(s) < \frac{1}{m} g(s) \ \ \forall s \in S$,
\item[{}] $a(s) = L(\theta(b))(s)$ for all $s \in \pi^{-1}(U)$,
\item[{}]  $a(s) = \frac{1}{ m+1}g^-(s)$ for $s \leq -K$, and
\item[{}] $a(s) = \frac{1}{m+1}g^+(s)$ for all $s \geq K$.
\end{itemize}
Choose $k \in \mathbb N$ so large that $[-\frac{1}{k},\frac{1}{k}] \subseteq U$ and note that 
$$
a(s) = L(\theta(b))(s)\psi^0_k\circ \pi(s) + a(s)\psi^+_k\circ\pi(s)  +a(s)\psi^-_k\circ \pi(s)  .
$$
Then the function
$$
a' := a - L(\theta(b))\psi^0_k\circ \pi - \frac{1}{m+1}g^+\psi^+_k\circ \pi - \frac{1}{m+1}g^-\psi^-_k\circ \pi
$$
is supported in 
$\pi^{-1}(]-K,K[ \backslash \{0\})$. Let $\delta > 0$ be smaller than $\frac{1}{m}g(s) - a(s)$ and $a(s) - \frac{1}{n}g(s)$ for all $s \in \pi^{-1}([-K,K])$. By Property \ref{07-09-21} we can find $c \in G_{00}$ such that $\supp c \subseteq \pi^{-1}(]-K,K[ \backslash \{0\})$ and $\left|c(s) -a'(s)\right| < \delta$ for all $s \in S$. Note that $\frac{1}{m+1}g^\pm \in \mathcal G(\pm)$ by (c) in Lemma \ref{helsingor}. Thus
$$
g_1 := c  +L(\theta(b))\psi^0_k\circ \pi + \frac{1}{m+1}g^+\psi^+_k\circ \pi + \frac{1}{m+1}g^-\psi^-_k\circ \pi \ \in \ G
$$
and $mg_1(s) < g(s) < ng_1(s)$ for all $s \in S$. Since $\mathbb L_\pm(ng_1 - g) = \mathbb L_\pm(g)$  and 
$$
R_\pm\left(e^{-\mathbb L_\pm(g)\pi} (ng_1-g)^\pm\right) = (\frac{n}{m+1} -1)R_\pm\left( e^{-\mathbb L_\pm(g)\pi}g^{\pm}\right)
$$
is strictly positive we find that $ng_1 > g$ in $(G,G^+)$. It follows in the same way that $g >mg_1$ in $(G,G^+)$, and we conclude that $m(b^{(0)},g_1)\leq (\xi,g) \leq n(b^{(0)},g_1)$ in $(\Gamma,\Gamma^+)$. 
\end{proof}

\begin{lemma}\label{aabenraa2} The only order ideals $I$ in $(\Gamma,\Gamma^+)$ such that $\rho(I) = I$ are $I = \{0\}$ and $I = \Gamma$.
\end{lemma}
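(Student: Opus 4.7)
The plan is to show that every nonzero $\rho$-invariant order ideal $I\subseteq\Gamma$ equals all of $\Gamma$. Fix $x=(\xi,g)\in I^+\setminus\{0\}$ and an arbitrary $y=(\eta,h)\in\Gamma^+\setminus\{0\}$; I will produce $y'\in I^+$ and $n\in\mathbb N$ with $ny'-y\in\Gamma^+$. The hereditary property of $I^+$ then gives $y\in I$, so $\Gamma^+\subseteq I$ and consequently $I=\Gamma^+-\Gamma^+=\Gamma$.

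The construction of $y'$ uses the $\rho$-invariance of both $\Gamma^+$ and $I$: each translate $\rho^k(x) = (\sigma^k\xi,\,e^{-k\pi}g)$ already lies in $I^+$, and I set
\eQ{y' \ :=\ \sum_{k=-M}^{M} \rho^k(x) \ \in\ I^+}
for $M\in\mathbb N$ to be chosen. Writing $g = g^+$ on $\pi^{-1}([R,\infty))$ and $g = g^-$ on $\pi^{-1}((-\infty,-R])$ with $g^\pm\in\mathcal G(\pm)$ (and similarly $h = h^\pm$ on the same tails), I take $M$ so large that $\mathbb L_+(g)+M > \mathbb L_+(h)$ and $\mathbb L_-(g)-M < \mathbb L_-(h)$. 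The $G$-component $y'_G = g\sum_{k=-M}^M e^{-k\pi}$ then has $\mathbb L_+(y'_G)=\mathbb L_+(g)+M$ and $\mathbb L_-(y'_G)=\mathbb L_-(g)-M$, the dominant Laurent terms being $k=-M$ at $+\infty$ and $k=M$ at $-\infty$.

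Next I verify $ny'-y\in\Gamma^+$ for all sufficiently large $n$. Compatibility with the subgroup $\Gamma$ is automatic from the intertwining $\hat L\circ\sigma=\rho_0\circ\hat L$ applied to the $\oplus_\mathbb Z H$-component. For the two conditions defining $G^+$ applied to $ny'_G-h$: by the degree dominance above, $\mathbb L_\pm(ny'_G-h)=\mathbb L_\pm(y'_G)$, and the leading asymptotic yields
\eQ{R_\pm\!\bigl(e^{-\mathbb L_\pm(ny'_G - h)\pi}(ny'_G - h)^\pm\bigr) \ =\ n\,R_\pm\!\bigl(e^{-\mathbb L_\pm(g)\pi}g^\pm\bigr),}
which is strictly positive on $\partial S_\pm$ because $x\in\Gamma^+$. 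Pointwise strict positivity of $ny'_G-h$ follows by splitting $S$ into the compact middle $\pi^{-1}([-R,R])$, where properness of $\pi$ gives $y'_G\geq g$ a uniform positive lower bound and $h$ a uniform upper bound so a large $n$ handles it, and the two tails, where $y'_G/h\to\infty$ by the degree dominance.

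With $ny'\in I^+$ and $0\leq y\leq ny'$ in $(\Gamma,\Gamma^+)$, the order ideal property forces $y\in I^+\subseteq I$; since $y$ was arbitrary this yields $I=\Gamma$. The main bookkeeping issue is controlling $\mathbb L_\pm$ and $R_\pm$ stably through the Laurent-polynomial manipulations involved, so that no cancellation among the summands of $y'$ secretly lowers the advertised degrees; Lemmas \ref{roervigx} and \ref{21-07-23} supply exactly the guarantees needed.
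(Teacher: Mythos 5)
Your proof is correct, and it follows the same overall strategy as the paper --- use $\rho$-invariance to shift the degrees $\mathbb L_\pm$ until they dominate those of an arbitrary positive element, then conclude by the hereditary property of the order ideal --- but the construction in the middle is genuinely different. The paper first manufactures an auxiliary element: from $(\xi,g)\in I\cap\Gamma^+$ it builds $g_1 = L(\theta(\Sigma(\xi)))\psi^0_k\circ\pi + e^{\mathbb L_-(g)\pi}\psi^-_k\circ\pi + e^{\mathbb L_+(g)\pi}\psi^+_k\circ\pi$, a cleaned-up element with \emph{pure exponential tails} of the same degrees as $g$, and uses the hereditary property a first time (via the sandwich $0 < g_1 < Kg$) to place $(h^{(0)},g_1)$ in $I$; only then does it add two translates $\rho_0^{l_1}(g_1)+\rho_0^{l_2}(g_1)$ to push the degree up at $+\infty$ and down at $-\infty$. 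You skip the auxiliary element entirely and instead sum the $2M{+}1$ translates $\rho^k(x)$, $-M\le k\le M$, of the original element, which stays in $I\cap\Gamma^+$ using only that $I$ is a $\rho$-invariant subgroup and $\Gamma^++\Gamma^+=\Gamma^+$. The trade-off is exactly where you flag it: with the paper's $g_1$ the leading-coefficient computations are trivial constants ($R_\pm(e^{-\mathbb L_\pm(g)\pi}g_1^\pm)=1$, and $R_\pm = M$ for the dominating difference), whereas your sum requires checking that no cancellation among the shifted Laurent coefficients lowers $\mathbb L_\pm(y'_G)$. That check does go through: in the $+$ case the coefficient of $y'_G$ at degree $\mathbb L_+(g)+M$ is $\bar g_{\mathbb L_+(g)}$ plus coefficients $\bar g_m$ with $m>\mathbb L_+(g)$, all of which have $R_+(\bar g_m)=0$ by maximality in Lemma \ref{roervigx}, so the leading $R_+$ is exactly $R_+\bigl(e^{-\mathbb L_+(g)\pi}g^+\bigr)>0$, and symmetrically at $-\infty$; the same observation justifies $\mathbb L_\pm(ny'_G-h)=\mathbb L_\pm(y'_G)$ since the coefficients of $h$ above $\mathbb L_+(h)$ (resp.\ below $\mathbb L_-(h)$) also have vanishing $R_\pm$. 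Your route is shorter, uses hereditariness only once, and avoids the $\psi_k$, $L\circ\theta$ machinery at this point in the argument; the paper's route localizes all the analytic bookkeeping into one explicit element, at the price of the extra sandwich step.
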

\begin{proof} Recall that an order ideal $I$ in $(\Gamma,\Gamma^+)$ is a subgroup of $\Gamma$ such that
 \begin{itemize}
\item[(a)] $I= I \cap \Gamma^+ -  I \cap \Gamma^+$, and
\item[(b)] when $ 0 \leq y \leq x$ in $(\Gamma,\Gamma^+)$ and $x \in I$, then $y \in I$.
\end{itemize}
Let $I$ be a non-zero order ideal such that $\rho(I) = I$. Since $I \cap \Gamma^+ \neq \{0\}$ there is an element $g \in G^+ \backslash \{0\}$ and an element $\xi \in \bigoplus_{\mathbb Z}H$ such that $(\xi,g) \in I$. Set $h := \Sigma(\xi)$. Since $L(\theta(h))(x) = \hat{L}(\xi)(x) = g(x) > 0$ for all $x \in \pi^{-1}(0)$ there is a $k \in \mathbb N$ such that $L(\theta(h))(s) > 0$ for all $s \in \pi^{-1}([-\frac{1}{k},\frac{1}{k}])$. Set 
$$
g_1 := L(\theta(h))\psi^0_k \circ \pi + e^{\mathbb L_-(g)\pi}\psi^-_k\circ \pi  +  e^{\mathbb L_+(g)\pi}\psi^+_k\circ \pi .
$$
By the arguments used for the function \eqref{19-07-23b} it follows that $g_1 \in G$. (Alternatively one can use that $\psi^\pm_k\psi^\pm_1 = \psi^\pm_k$ if $k \geq 2$.) Note that there is a natural number $ K \in \mathbb N$ such that 
$$
0 < g_1(s)  < K g(s)  \  \ \ \forall s \in S   
$$ 
and
$$
K R_\pm(e^{-\mathbb L_\pm(g) \pi}g^\pm) \geq 2 .
$$
Furthermore,  
$$
R_\pm(e^{-\mathbb L_\pm(g)\pi}g_1^\pm)  =1 > 0
$$
and
$$
R_\pm(e^{-\mathbb L_\pm(Kg-g_1)\pi}(Kg-g_1)^\pm) = R_+(e^{-\mathbb L_\pm(g)\pi}(Kg-g_1)^\pm) \geq 2-1 =   1 > 0 . 
$$
Hence $0 <g_1 < Kg$ in $(G,G^+)$, implying that $({h}^{(0)}, g_1) \in I \cap \Gamma^+$. Note that
$$
\rho_0^{ l}(g_1) =  e^{- l \pi} L(\theta(h))\psi^0_k \circ \pi +  e^{(\mathbb L_-(g) - l)\pi}\psi^-_k\circ \pi +  e^{(\mathbb L_+(g)-  l)\pi}\psi^+_k\circ \pi
$$
for all $l \in \mathbb Z$. Consider an arbitrary element $(\xi',f) \in \Gamma^+ \backslash \{0\}$. We can then find $l_1,l_2 \in \mathbb Z$ such that $l_1 < l_2$, $\mathbb L_+(g) - l_1 > \mathbb L_+(f)$ and $\mathbb L_-(g) - l_2 < \mathbb L_-(f)$. Set
$$
H:= \rho_0^{l_1}(g_1)+ \rho_0^{ l_2}(g_1) 
$$
and $\xi'' := \sigma^{l_1}(h^{(0)}) + \sigma^{l_2}(h^{(0)}) \in \oplus_\mathbb Z H$, and note that $(\xi'',H) \in I\cap \Gamma^+$. Since $\mathbb L_+(g) - l_1 > \mathbb L_+(f)$ and $\mathbb L_-(g) - l_2 < \mathbb L_-(f)$ and because $H$ is strictly positive there is an $M \in \mathbb N$ such that
$$
f(s) < MH(s) \ \ \forall s \in S.
$$
Furthermore, $\mathbb L_+( MH - f) = \mathbb L_+(g) - l_1$ and 
$$
R_+\left(e^{-\mathbb L_+(MH -f)\pi}(MH-f)^+\right) = M > 0 .
$$
Similarly, $\mathbb L_-( MH - f) = \mathbb L_-(g) - l_2$ and
$$
R_-\left(e^{-\mathbb L_-(MH -f)\pi}(MH-f)^-\right) = M > 0 .
$$
It follows that $f < M H$ in $(G,G^+)$ and hence that $(\xi',f) \leq M(\xi'',H)$ in $(\Gamma,\Gamma^+)$. It follows that $(\xi',f) \in I$ and we conclude therefore that $I = \Gamma$.
\end{proof}

We denote in the sequel by $\mathcal A_\mathbb R(S,\pi)$ the real Banach space consisting of the elements of $\mathcal A(S,\pi)$ that have a limit at infinity.

\begin{lemma}\label{01-09-21e} 
\begin{itemize}
\item[(a)] Let $f \in \mathcal A_\mathbb R(S,\pi)$ and let $\epsilon > 0$ be given. There is an element $g\in G$ such that $\sup_{s \in S} |f(s)- g(s)| \leq \epsilon $.
\item[(b)]  Let $f \in \mathcal A_c(S,\pi)$ and let $\epsilon > 0$ be given. There is a $g \in G \cap \mathcal A_c(S,\pi)$ such that $\sup_{s \in S} |f(s)- g(s)| \leq \epsilon $ and  $\sup_{s \in S} |e^{-\pi(s)}f(s)- e^{-\pi(s)}g(s)| \leq \epsilon $.
\end{itemize}
\end{lemma}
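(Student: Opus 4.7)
The plan is to exhibit the desired $g \in G$ as a sum $g = g_1 + g_2$, where $g_1 \in G_k$ captures the asymptotic behaviour of $f$ on the central fibre $\pi^{-1}(0)$ and on both tails, while $g_2 \in G_{00}$ is a compactly supported correction that absorbs the mismatch on the intermediate annulus.

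For (a), set $a_\pm := \lim_{\pi(s)\to\pm\infty} f(s)$. First pick rationals $q_\pm$ with $|q_\pm - a_\pm| < \epsilon/10$, and using density of $\theta(H)$ in $\Aff \pi^{-1}(0)$ pick $h_0 \in H$ with $\sup_{\pi^{-1}(0)}|\theta(h_0) - f| < \epsilon/10$. Choose $N_0 > 0$ so that $|f(s) - a_\pm| < \epsilon/10$ when $\pm \pi(s) \geq N_0$, and then $k \in \mathbb N$ so large that $|f(s) - L(\theta(h_0))(s)| < \epsilon/5$ on the compact set $\pi^{-1}([-1/k, 1/k])$ (possible by continuity and the normalisation $L(\theta(h_0))|_{\pi^{-1}(0)} = \theta(h_0)$). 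The candidate first piece is
$$
g_1 := q_-\psi_k^-\circ\pi + L(\theta(h_0))\psi_k^0\circ\pi + q_+\psi_k^+\circ\pi.
$$
The identification $g_1 \in G_k$ is the delicate point: the middle term equals $\hat L(h_0^{(0)})\psi_k^0\circ\pi$ tautologically, while (f) and (c) of Lemma \ref{helsingor} give $q_\pm\psi_k^\pm\circ\pi \in A(\pm)$, and choosing the cutoffs so that $\psi_{2k+1}^\pm \equiv 1$ on $\supp \psi_k^\pm$ lets one rewrite this as $(q_\pm\psi_{2k+1}^\pm\circ\pi)\cdot\psi_k^\pm\circ\pi \in \mathcal G(\pm)\psi_k^\pm\circ\pi$.

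A direct comparison shows $|f - g_1| \leq \epsilon/5$ on the central set $\pi^{-1}([-1/(2k),1/(2k)])$ and on the two tails $\pi^{-1}(\{|t| \geq N_0\})$, while on the intermediate annulus $A := \pi^{-1}([-N_0,-1/(2k)] \cup [1/(2k),N_0])$ the error may be arbitrary. To kill it, fix a continuous cutoff $\chi : \mathbb R \to [0,1]$ equal to $1$ on $A$ and vanishing both on a neighbourhood of $0$ and outside $[-N_0-1, N_0+1]$, and set $b := (f - g_1)\chi\circ\pi$. Since $\chi\circ\pi$ is constant on fibres and $f - g_1$ affine on fibres, $b \in \mathcal A(S,\pi)$, and as its support is compact and disjoint from $\pi^{-1}(0)$, in fact $b \in \mathcal A_{00}(S,\pi)$. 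Property \ref{07-09-21} furnishes $g_2 \in G_{00}$ with $\sup|b - g_2| < \epsilon/5$ and support inside the fixed tube $\pi^{-1}(\,]-N_0-2, N_0+2[\, \setminus \{0\})$. The estimate $\sup|f - g| \leq \epsilon$ for $g := g_1 + g_2 \in G_k + G_{00} \subseteq G$ follows from a routine case analysis: where $\chi\circ\pi(s) = 1$ one has $g(s) \approx g_1(s) + (f(s) - g_1(s)) = f(s)$; where $\chi\circ\pi(s) = 0$ the quantity $g_2$ is within $\epsilon/5$ of $0$ and $|f-g_1| \leq \epsilon/5$; the narrow transition zones are controlled by the convex combination of these two estimates.

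Part (b) is the compactly supported variant: since $f \in \mathcal A_c(S,\pi)$ the limits $a_\pm$ vanish, so one takes $q_\pm = 0$, the tail pieces of $g_1$ drop out, and $g = L(\theta(h_0))\psi_k^0\circ\pi + g_2$ is automatically compactly supported. To obtain the $e^{-\pi}$-weighted bound simultaneously, fix $M > 0$ with $\supp f \subseteq \pi^{-1}([-M,M])$ and, using the support control in Property \ref{07-09-21}, arrange $\supp g_2 \subseteq \pi^{-1}([-M-1, M+1])$; then $\supp(f-g)$ lies in a common compact tube on which $e^{-\pi}$ is bounded by $e^{M+1}$, so running the construction with the sharper tolerance $\epsilon e^{-(M+1)}$ in place of $\epsilon$ yields both $\sup|f - g| \leq \epsilon$ and $\sup|e^{-\pi}(f - g)| \leq \epsilon$ at once. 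The principal technical obstacle in both parts is the identification of the tail terms $q_\pm\psi_k^\pm\circ\pi$ as genuine elements of $\mathcal G(\pm)\psi_k^\pm\circ\pi$, which is what forces the detour through $\psi_{2k+1}^\pm$ and the combination of (c), (f) of Lemma \ref{helsingor}; everything else is a careful but mechanical partition-of-unity argument.
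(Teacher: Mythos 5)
Your proof is correct and follows essentially the same route as the paper's: a central piece $L(\theta(h_0))\psi^0_k\circ\pi$, rational constants on the tails identified as elements of $G$ via (a), (c), (f) of Lemma \ref{helsingor} (the same $\psi^{\pm}_{2k+1}$ factorisation the paper uses in the proof of Lemma \ref{01-09-21}), a compactly supported correction from Property \ref{07-09-21}, and part (b) by rerunning the argument with tolerance scaled by $e^{-(M+1)}$ together with the support control in Property \ref{07-09-21}. The only cosmetic differences are that the paper first splits off the single limit at infinity, writing $f \approx f_1 + r$ with $f_1 \in \mathcal A_c(S,\pi)$ (note that by the definition of $\mathcal A_{\mathbb R}(S,\pi)$ your two limits coincide, $a_+ = a_-$), and then approximates $f_1\psi^+_k\circ\pi + f_1\psi^-_k\circ\pi$ by an element of $G_{00}$ directly instead of introducing your auxiliary cutoff $\chi$.
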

\begin{proof} (a) An initial approximation gives us an element $f_1 \in \mathcal A_c(S,\pi)$ and a real number $r \in \mathbb R$ such that 
$$
\sup_{s \in S} |f(s)- f_1(s) - r| \leq \frac{\epsilon}{2} \ .
$$
Choose $q \in \mathbb Q$ such that $|q-r| < \frac{\epsilon}{6}$ and choose an element $h \in H$ such that $|\theta(h)(x) - f_1(x) -r| < \frac{\epsilon}{6}$ for all $x \in \pi^{-1}(0)$. There is a $k \in \mathbb N$, $k \geq 2$, such that $\left|L(\theta(h))(s) -f_1(s) -r\right| < \frac{\epsilon}{6}$ for all $s \in \pi^{-1}([-\frac{1}{k},\frac{1}{k}])$. Since $f_1\psi^+_k\circ \pi + f_1\psi^-_k\circ \pi$ is compactly supported in $S \backslash \pi^{-1}(0)$ it follows from Property \ref{07-09-21} that there is an element $g' \in G_{00}$ such that
$$
\sup_{s \in S}\left| g'(s) - f_1(s)\psi^+_k\circ \pi(s) - f_1(s)\psi^-_k\circ \pi(s)\right| \leq \frac{\epsilon}{6} \ .
$$
Set
$$
g := L(\theta(h))\psi^0_k \circ \pi + q\psi^+_k\circ \pi  + q\psi^-_k\circ \pi  + g' .
$$
Note that $g \in G$. It is easy to see that $g$ has the desired property. 

(b) Choose $N \in \mathbb N$ such that $\supp f \subseteq ]-N,N[$. Repeat then the proof of (a) with $\epsilon$ replaced by $e^{-N}\epsilon$ and $f_1$ replaced by $f$. Take $r=q = 0$ and choose $g'$ from Property \ref{07-09-21} such that $\supp g' \subseteq \pi^{-1}(]-N,N[\backslash \{0\})$.
\end{proof}

For each $s \in S$ we can define a positive homomorphism $\ev_s : \Gamma \to \mathbb R$ such that
$$
\ev_s(\xi,g) := g(s) \ .
$$ 
Then $\ev_s(v) = 1$, and $\ev_s \circ \rho = e^{-\pi(s)} \ev_s$.

\begin{lemma}\label{27-08-21x}
Let $\phi : \Gamma \to \mathbb R$ be a positive homomorphism with the properties that $\phi(v) =1$ and $\phi \circ \rho = t \phi$ for some $t > 0$. Set $\beta := -\log t$. There is an element $s\in \pi^{-1}(\beta)$ such that $\phi = \ev_s$.
\end{lemma}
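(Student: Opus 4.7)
The plan is to produce $s$ in three stages: reduce $\phi$ to a positive functional $\mu$ on $G$, show that $\mu(g)$ depends only on $g|_{\pi^{-1}(\beta)}$, and identify $\mu$ with evaluation at a point of the compact Choquet simplex $\pi^{-1}(\beta)$.

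First I would factor $\phi$ through the surjective projection $\Gamma\to G$ of Lemma \ref{aabenraa}. If $(\xi,0)\in\Gamma$, then for every $m\in\mathbb Z$ the element $m(\xi,0)+v=(m\xi+u^{(0)},1)$ lies in $\Gamma^+$, because the constant function $1$ lies in $G^+$ and $\hat L(m\xi+u^{(0)})=1$ near $\pi^{-1}(0)$. Positivity of $\phi$ then gives $m\phi((\xi,0))+1\geq 0$ for every $m\in\mathbb Z$, forcing $\phi((\xi,0))=0$. Consequently $\mu(g):=\phi((\xi,g))$ is well defined, and $\mu\colon G\to\mathbb R$ is a group homomorphism satisfying $\mu(1)=1$, $\mu(G^+)\subseteq[0,\infty)$ and $\mu\circ\rho_0=t\mu$.

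The main step is to show $\mu(g)=0$ whenever $g\in G$ satisfies $g|_{\pi^{-1}(\beta)}=0$. I would first establish a sup-norm bound: if $g\in G$ has $\mathbb L_\pm(g)\leq 0$ and $\|g\|_\infty\leq M$, then for any integer $N>M$ one has $N\pm g\in G^+$ since pointwise strict positivity is clear and $R_\pm(N\pm g)=N\pm R_\pm(g_0)>0$; hence $|\mu(g)|\leq N$, and replacing $g$ by $ng$ and sending $n\to\infty$ yields $|\mu(g)|\leq\|g\|_\infty$. Now let $g\in G$ vanish on $\pi^{-1}(\beta)$ and let $\epsilon>0$. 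Compactness of $\pi^{-1}(\beta)$ (properness of $\pi$) and continuity of $g$ produce $\eta>0$ with $|g(s)|<\epsilon$ for $|\pi(s)-\beta|<\eta$. Using the cutoffs $\psi^\pm_k,\psi^0_k$ and the closure properties of Lemma \ref{helsingor} and Property \ref{07-09-21}, decompose $g=g_c+g_++g_-$ within $G$ (up to arbitrarily small sup-norm error) so that $g_c$ is compactly supported in $\pi^{-1}((\beta-\eta,\beta+\eta))$ with $\|g_c\|_\infty\leq C\epsilon$, while $g_\pm$ have supports in $\pi^{-1}([\beta+\eta/2,\infty))$ and $\pi^{-1}((-\infty,\beta-\eta/2])$ respectively. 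The bound above yields $|\mu(g_c)|\leq C\epsilon$. For $g_+$, the identity $\mu(g_+)=t^{-n}\mu(e^{-n\pi}g_+)$ combined with the estimate $|e^{-n\pi(s)}g_+(s)|\leq Ce^{(\mathbb L_+(g_+)-n)(\beta+\eta/2)}$ valid on $\supp g_+$ for $n>\mathbb L_+(g_+)$ gives $|t^{-n}\mu(e^{-n\pi}g_+)|\leq Ce^{\mathbb L_+(g_+)(\beta+\eta/2)-n\eta/2}\to 0$ as $n\to+\infty$, whence $\mu(g_+)=0$; symmetrically $\mu(g_-)=0$ with $n\to-\infty$. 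Thus $|\mu(g)|\leq C\epsilon$ for every $\epsilon$, so $\mu(g)=0$.

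By the previous step, $\mu$ descends to a positive, normalized, sup-norm continuous linear map $\tilde\mu\colon r_\beta(G)\to\mathbb R$, where $r_\beta(g):=g|_{\pi^{-1}(\beta)}$. The analogue of Lemma 4.4 of \cite{ET} gives surjectivity $\mathcal A_c(S,\pi)\to\Aff\pi^{-1}(\beta)$, and combined with density of $G$ in $\mathcal A_\mathbb R(S,\pi)$ from Lemma \ref{01-09-21e}, the image $r_\beta(G)$ is uniformly dense in $\Aff\pi^{-1}(\beta)$. Therefore $\tilde\mu$ extends uniquely to a positive normalized functional on $\Aff\pi^{-1}(\beta)$; since $\pi^{-1}(\beta)$ is a compact Choquet simplex it coincides with the state space of $\Aff\pi^{-1}(\beta)$, so this functional is $f\mapsto f(s)$ for a unique $s\in\pi^{-1}(\beta)$. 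Hence $\phi((\xi,g))=\mu(g)=g(s)=\ev_s((\xi,g))$ for all $(\xi,g)\in\Gamma$. The main obstacle is the second step: making the cutoff decomposition $g=g_c+g_++g_-$ and the sup-norm estimates work inside $G$ requires careful use of the closure and density properties collected in Lemma \ref{helsingor} and Property \ref{07-09-21}, and one must track $\mathbb L_\pm$ through the iterations by $\rho_0$ to apply the sup-norm bound uniformly on each piece.
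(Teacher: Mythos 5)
Your proposal is correct in outline but takes a genuinely different route from the paper's proof. The paper never localizes $\mu$ directly: after killing the kernel of $\Gamma \to G$ via large denominators (Lemma \ref{aabenraa1}), it proves a ``mundane'' positivity statement for $\phi'$ (pointwise $g \geq 0$ implies $\phi'(g) \geq 0$) by adding an auxiliary $h$ with tails $\frac{1}{n}e^{\mathbb L_\pm(g)\pi}\psi^\pm_1\circ \pi$ and using $\phi'\circ \rho_0 = t\phi'$ to bound $\phi'(h)$; it then deduces the sup-norm bound, extends $\phi'$ by Hahn--Banach to the continuous functions with a limit at infinity, invokes the Riesz representation theorem, and shows the representing measure is concentrated on $\pi^{-1}(\beta)$ by pushing the scaling relation forward to $C_c(\mathbb R)$, finally treating unbounded $f \in G$ via $f = f_- + f_0 + f_+$ and scaling. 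You replace the measure-theoretic middle by direct estimates showing $\mu$ annihilates everything vanishing on the fiber, and your first two steps are in fact simpler than the paper's: the trick $m(\xi,0)+v = (m\xi + u^{(0)},1) \in \Gamma^+$ for all $m \in \mathbb Z$ avoids large denominators (it works because $L(1) = 1$ near $\pi^{-1}(0)$, so the pair does lie in $\Gamma$, and the constant $1$ lies in $G^+$ as the paper notes), while $N \pm g \in G^+$ for integers $N > \|g\|_\infty$ gives the norm bound without the mundane-positivity detour (note that $\mathbb L_\pm(g) \leq 0$ is automatic for bounded $g \in G$, since $\mathbb L_+(g) > 0$ would force $|g(s_i)| \geq \delta e^{\mathbb L_+(g)\pi(s_i)}$ along a net with $\pi(s_i) \to \infty$). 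The endgame --- positive normalized functionals on $\Aff \pi^{-1}(\beta)$ are point evaluations --- is common to both proofs.

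The one place your sketch cannot be run as literally written is the decomposition $g = g_c + g_+ + g_-$: the group $G$ is not closed under multiplication by cutoffs at the thresholds $\beta \pm \eta/2$, and Property \ref{07-09-21} produces approximants in $G_{00}$ that are small in sup-norm but whose supports are confined only to $\pi^{-1}(]-N,N[ \backslash \{0\})$, not near the set being cut. So the best available is $g_+ = h_1 + h_2$ with $h_1$ \emph{exactly} supported in $\pi^{-1}([\beta+\eta/2,\infty))$ but $h_1 \notin G$, while $g_+ \in G$ and $\|h_2\|_\infty < \delta$ with $\supp h_2 \subseteq \pi^{-1}(]-N,N[)$. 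This matters because your scaling estimate amplifies the error: $t^{-n}\|e^{-n\pi}h_2\|_\infty \leq \delta e^{n(\beta+N)} \to \infty$ as $n \to \infty$, so you cannot fix the decomposition and then let $n \to \infty$ to conclude $\mu(g_+) = 0$ exactly. The repair is an ordering of quantifiers: fix the exactly supported comparison function $h_1$ first (for instance $(f^+\psi^+_k\circ\pi + g_{00})\chi\circ\pi$), which determines $C$ and $\mathbb L_+$; choose $n$ with $Ce^{\mathbb L_+(\beta+\eta/2)}e^{-n\eta/2} < \epsilon$; only then choose the $G_{00}$-approximant with $\delta < \epsilon e^{-n(\beta+N)}$, obtaining $|\mu(g_+)| \leq 2\epsilon$ rather than $0$ --- which still suffices, since $\epsilon$ is arbitrary. (For $\beta = 0$ the built-in cutoffs $\psi^0_k, \psi^\pm_k$ do the splitting directly, using that $\pi(\supp g_{00})$ avoids a neighbourhood of $0$.) With this reordering your argument closes; what it buys is a proof avoiding Hahn--Banach and the Riesz representation theorem altogether, at the price of this more delicate bookkeeping inside $G$ --- exactly the obstacle you flagged at the end of your proposal.
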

\begin{proof} The projection $p : \Gamma \to G$ is surjective by Lemma \ref{aabenraa}. Assume that $(\xi,g) \in \Gamma \backslash \{0\}$ and $p(\xi,g) = g = 0$. Since $(\Gamma,\Gamma^+)$ has large denominators by Lemma \ref{aabenraa1} there is for each $n \in \mathbb N$ an element $(\xi_n,g_n) \in \Gamma^+\backslash \{0\}$ such that $n(\xi_n,g_n) \leq v$ in $(\Gamma,\Gamma^+)$. Then $\pm (\xi,g) \leq (\xi_n,g_n)$ in $(\Gamma,\Gamma^+)$ and hence $\pm \phi(\xi,g) \leq \phi(\xi_n,g_n) \leq \frac{1}{n}$. It follows that $\phi(\xi,g) = 0$ and we conclude that there is a homomorphism $\phi' : G \to \mathbb R$ such that $\phi' \circ p = \phi$. 

Let $g \in G$ and assume that $g(s) \geq 0$ for all $s \in S$.  Let $n \in \mathbb N, \ n \geq 2$. Since $\theta(H)$ is a dense in $\Aff \pi^{-1}(0)$ there is an $\eta \in H$ such that $0 < \theta(\eta)(x) < \frac{1}{n}$ for all $x \in \pi^{-1}(0)$. Set
$$
h^\pm := \frac{1}{n}e^{L_\pm(g)\pi}\psi^\pm_1 \circ \pi .
$$
It follows from (a) and (c) of Lemma \ref{helsingor} and the definitions of $A(\pm)$ and $G(\pm)$ that $h^{\pm} \in G(\pm) \subseteq \mathcal G(\pm)$ and hence that
$$
h:=  h^-\psi^-_k \circ \pi + L(\theta(\eta))\psi^0_k\circ \pi + h^+\psi^+_k \circ \pi 
$$
is an element of $G$ regardless of which $k \in \mathbb N$ we choose. Since $L(\theta(\eta))(x) = \theta(\eta)(x) > 0$ for all $x \in \pi^{-1}(0)$ we can choose $k$ so large that $0 < L(\theta(\eta))(s) < \frac{1}{n}$ for all $s \in \pi^{-1}([-\frac{1}{k}, \frac{1}{k}])$. Since 
$$
(n-1) h^\pm \psi^\pm_k\circ \pi \leq e^{L_\pm(g)\pi} 
$$
in $(G,G^+)$
and
$$
(n-1)  L(\theta(\eta))\psi^0_k\circ \pi  \leq 1
$$
in $(G,G^+)$ it follows that
\begin{align*}
&\phi'(h) \leq \frac{1}{n-1} + \frac{1}{n-1}\phi'(\rho_0^{-L_+(g)}(1))+ \frac{1}{n-1}\phi'(\rho_0^{-L_-(g)}(1)) \\
&=  \frac{1}{n-1}\left( 1 + t^{-\mathbb L_-(g)} + t^{-\mathbb L_+(g)}\right) .
\end{align*}
Now note that since $g \geq 0$ it follows that $g+h \in G^+$. Since $\phi$ is positive this implies that
$$
0 \leq \phi'(g+h) \leq \phi'(g) + \frac{1}{n-1}\left( 1 + t^{-\mathbb L_-(g)} + t^{-\mathbb L_+(g)}\right) .
$$
Since this holds for all $n \geq 2$ we conclude that $\phi'(g) \geq 0$. Thus $\phi'$ is positive in the mundane sense that 
$$
g\in G, \ g(s) \geq 0 \ \forall s \in S \Rightarrow \phi'(g) \geq 0.
$$

Let $g \in G$ and $n,m \in \mathbb N$ satisfy $|g(s)| < \frac{n}{m}$ for all $s \in S$. Then $-n <mg(s) < n$ for all $s \in S$ and since $\phi'(1) = 1$ this leads to the conclusion that $|\phi'(g)| \leq \frac{n}{m}$. Combined with (a) of Lemma \ref{01-09-21e} it follows from the last estimate that $\phi'$ extends by continuity from $G \cap \mathcal A_\mathbb R(S,\pi)$ to a linear map $\phi' : \mathcal A_\mathbb R(S,\pi)  \to \mathbb R$ such that $\left|\phi'(f)\right| \leq \sup_{s \in S} |f(s)|$ for all $f \in \mathcal A_\mathbb R(S,\pi)$. Using the Hahn-Banach theorem we extend $\phi'$ in a norm-preserving way to the space of all continuous real-valued functions on $S$ with a limit at infinity. Since $\phi'(1) = 1$ the extension is positive. It follows from the Riesz representation theorem that there is a bounded Borel measure $m$ on $S$ such that
$$
\phi'(f) = \int_S f(s) \ \mathrm{d} m(s)
$$
for all $f \in \mathcal A_{0}(S,\pi)$, where $\mathcal A_0(S,\pi)$ denotes the space of elements in $\mathcal A_\mathbb R(S,\pi)$ that vanish at infinity. 
Let $C_c(\mathbb R)$ denote the set of continuous real-valued compactly supported functions on $\mathbb R$ and note that $C_c(\mathbb R)$ is mapped into $\mathcal A_c(S,\pi)$ by the formula $F \mapsto F \circ \pi$. Since $\phi' \circ \rho_0 = t\phi'$ it follows from (b) of Lemma \ref{01-09-21e} that $\phi'(e^{-\pi}h) = t \phi'(h)$ for all $h \in \mathcal A_c(S,\pi)$. In particular,
$$
 \int_\mathbb R e^{-y}F(y) \ \mathrm{d}m \circ \pi^{-1}(y)  =  t\int_\mathbb R F(y) \ \mathrm{d}m \circ \pi^{-1}(y)  \ \ \forall F \in C_c(\mathbb R)  .
 $$
It follows from this that $m \circ \pi^{-1}$ is concentrated at the point $\beta = -\log t$ and hence that $m$ is concentrated on $\pi^{-1}(\beta)$. We can therefore define a linear functional $\phi'' : \Aff \pi^{-1}(\beta) \to \mathbb R$ by
$$
\phi''(f) := \phi'(\hat{f}) = \int_S \hat{f}(x) \ \mathrm{d} m(x) \ ,
$$
where $\hat{f} \in \mathcal A_0(S,\pi)$ is any element with $\hat{f}|_{\pi^{-1}(\beta)} = f$. Such an $\hat{f}$ exists by (1) in Lemma 4.4 of \cite{ET}. If $f \geq 0$  in $\Aff \pi^{-1}(0)$ and $\epsilon > 0$ are given we can choose $\hat{f}$ such that $\hat{f} \geq - \epsilon$ and we see therefore that $\phi''$ is a positive linear functional. Since every state of $\Aff \pi^{-1}(\beta)$ is given by evaluation at a point in $\pi^{-1}(\beta)$ it follows in this way that there is an $s\in \pi^{-1}(\beta)$ and a number $\lambda \geq 0$ such that 
\begin{equation}\label{01-09-21h}
\phi'(g) = \lambda g(s)
\end{equation}
for all $g  \in \mathcal A_0(S,\pi)$. In particular, this conclusion holds for all $g \in G \cap \mathcal A_0(S,\pi)$. A general element $f \in G$ can be write as a sum
$$
f = f_- + f_0 + f_+ ,
$$
where $f_\pm, f_0 \in G$, $f_0$ has compact support and there are natural numbers $n_{\pm} \in \mathbb N$ such that $e^{n_-\pi}f_- \in \mathcal A_0(S,\pi)$ and $e^{-n_+ \pi}f_+ \in \mathcal A_0(S,\pi)$. Then $\phi'(f_0) = \lambda f_0(s)$,
\begin{align*}
&\phi'(f_-) = \phi'(\rho_0^{n_-}(e^{n_-\pi}f_-)) = t^{n_-}\phi'(e^{n_-\pi}f_-) \\
&=  t^{n_-}\lambda e^{n_-\pi(s)}f_-(s) = \lambda f_-(s),
\end{align*}
and similarly, $\phi'(f_+) = \lambda f_+(s)$.
It follows that $\phi(f) = \lambda f(s)$. Inserting $f=1$ we find that $\lambda = 1$ and the proof is complete.
\end{proof}

\subsection{The crossed product $C^*$-algebra}

It follows from \cite{EHS} and \cite{E1} that there is a stable AF-algebra $A$ such that $(K_0(A),K_0(A)^+) = (G, G^+)$ and an automorphism $\alpha \in \Aut A$ such that $\alpha_* = \rho$. Furthermore, there is a projection $e \in A$ such that $[e] = v$ in $K_0(A)$. We can then consider the crossed product $A \rtimes_\alpha \mathbb Z$ and the flow $\hat{\alpha}^e$ on the corner $e(A\rtimes_\alpha \mathbb Z)e$. However, there is a freedom in the choice of the lift $\alpha \in \Aut A$ of $\rho \in \Aut (G,G^+)$ which will be crucial in order to fix the Elliott invariant of $e(A\rtimes_\alpha\mathbb Z)e$. We return to this point in Section \ref{Elliott}. In this and the following section we check that the KMS bundle of $\hat{\alpha}^e$ is isomorphic $(S,\pi)$ and that the set of $\KMS_\infty$ states and $\KMS_{-\infty}$ states of $\hat{\alpha}^e$ are homeomorphic to $D_+$ and $D_-$, respectively, and for this any lift will work.

\begin{lemma}\label{21-06-23x} $A\rtimes_\alpha \mathbb Z$ is simple.
\end{lemma}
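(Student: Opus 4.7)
The plan is to apply the standard simplicity criterion for crossed products of AF-algebras by $\mathbb Z$-actions (the same one used in \cite{ET} and \cite{BEK1}): if $A$ is $\alpha$-simple and $\alpha^n$ is properly outer for every nonzero $n$, then $A\rtimes_\alpha \mathbb Z$ is simple. Both conditions can be read off from properties of $(\Gamma,\Gamma^+,\rho)$ already established.

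For the first condition I would use Bratteli's correspondence between closed two-sided ideals of the AF-algebra $A$ and order ideals of $K_0(A) = \Gamma$; under this correspondence, $\alpha$-invariance translates to $\rho$-invariance since $\alpha_* = \rho$. Lemma \ref{aabenraa2} then guarantees that the only $\rho$-invariant order ideals are $\{0\}$ and $\Gamma$, so $A$ is $\alpha$-simple.

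For the second condition I would appeal to Lemma \ref{27-08-21x}: each $\ev_s \colon \Gamma \to \mathbb R$ is a positive homomorphism with $\ev_s \circ \rho^n = e^{-n\pi(s)}\,\ev_s$. Since $\pi(S)$ is unbounded (a standing hypothesis of Theorem \ref{main}), one may pick $s$ with $\pi(s)\neq 0$, showing $\rho^n \neq \id$ for every $n\neq 0$; more strongly, the range of scaling factors $\{e^{-n\pi(s)} : s \in S\}$ forces the Connes spectrum $\Gamma(\alpha)$ to be all of $\mathbb T$. Combined with $\alpha$-simplicity, this yields simplicity of $A\rtimes_\alpha \mathbb Z$ via the Olesen--Pedersen theorem.

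The main obstacle is the step from ``$\rho^n \neq \id$ on $K_0$" to genuine \emph{proper} outerness of $\alpha^n$, which in general is more than a $K$-theoretic statement since it concerns the behaviour of $\alpha^n$ on every quotient of $A$ by an invariant ideal. Here that difficulty is softened by the $\alpha$-simplicity established in the first step, which restricts the invariant quotients to $A$ itself and $0$; the abundance of scaling homomorphisms provided by Lemma \ref{27-08-21x} then supplies the outerness required on $A$.
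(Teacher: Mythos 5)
Your first step coincides with the paper's: Bratteli's bijection between closed two-sided ideals of the AF-algebra $A$ and order ideals of $K_0(A)=(\Gamma,\Gamma^+)$, combined with Lemma \ref{aabenraa2}, gives $\alpha$-simplicity, exactly as in the paper's proof. The second step, however, has a genuine gap. The assertion that the scaling factors $\left\{e^{-n\pi(s)} : s \in S\right\}$ ``force the Connes spectrum to be all of $\mathbb T$'' is offered with no argument, and it is not a standard implication: the Connes spectrum is the intersection of the Arveson spectra of the restrictions of the action to non-zero invariant hereditary subalgebras, and it is not computed by how positive homomorphisms on $K_0$ transform under $\rho$. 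Worse, for a separable $\alpha$-simple system the fullness of the Connes spectrum is essentially \emph{equivalent}, by the Olesen--Pedersen theory you invoke, to proper outerness of every $\alpha^n$, $n\neq 0$ --- which is exactly the step you yourself flag as the obstacle. So the proposal assumes, in a different dress, the very condition it set out to verify.

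What your scaling homomorphisms genuinely give is weaker: a densely defined lower semicontinuous trace is invariant under inner (indeed pointwise-norm limits of inner) automorphisms, so a trace $\tau_s$ with $\tau_s\circ\alpha^n = e^{-n\pi(s)}\tau_s$ and $\pi(s)\neq 0$ shows $\alpha^n$ is not (approximately) inner on $A$. Elliott's proper outerness demands more: a distance-$2$ separation from all inner automorphisms, and this on \emph{every} non-zero $\alpha^n$-invariant ideal --- note that $\alpha$-simplicity only controls $\alpha$-invariant ideals, and although for $\mathbb Z$-actions one can pass to the $\alpha$-saturation of an $\alpha^n$-invariant ideal, this has to be said. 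The missing bridge is the AF-specific mechanism the paper actually uses: in the framework of \cite{E2}, the non-triviality of $\alpha^k_*=\rho^k$ on $K_0$, together with the absence of non-trivial invariant ideals, yields proper outerness in Elliott's sense, and then Theorem 3.2 of \cite{E2} gives simplicity of $A\rtimes_\alpha\mathbb Z$. To repair your argument you should either quote that result, as the paper does, or supply an actual computation of the Connes spectrum; neither is present. (A small mis-citation besides: the scaling identity $\ev_s\circ\rho = e^{-\pi(s)}\ev_s$ is the easy observation stated just before Lemma \ref{27-08-21x}; the lemma itself is the converse, classifying all scaling positive homomorphisms as evaluations.)
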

\begin{proof} It follows from Lemma \ref{aabenraa2} and the bijective correspondence between ideals of $A$ and order ideals in $(\Gamma,\Gamma^+)$ that there are no non-trivial ideals of $A$ left invariant by $\alpha$. Combined with the observation that $\alpha^k_* = \rho^k$ is non-trivial when $k \neq 0$ it follows that $\alpha^k$ is properly outer in the sense of \cite{E2} for all $k \neq 0$. It follows therefore from Theorem 3.2 of \cite{E2} that $A\rtimes_\alpha \mathbb Z$ is simple.
\end{proof} 

It follows from Lemma \ref{21-06-23x} that $e$ is a full projection in $A \rtimes_\alpha \mathbb Z$. Hence Theorem 4.1 in \cite{Th1} shows that for every $\beta \in \mathbb R$ there is an affine bijection $\omega \mapsto \tilde{\omega}$ from the set of $\beta$-KMS states $\omega$ for $\hat{\alpha}^e$ onto the set of $\beta$-KMS weights $\tilde{\omega}$ for $\hat{\alpha}$ with the property that $\tilde{\omega}(e) = 1$. Using how the KMS-weights for $\hat{\alpha}$ were described by Vigand Pedersen in Theorem 5.1 of \cite{VP} (and again by the author in Lemma 3.1 of \cite{Th3}) we obtain the following

\begin{lemma}\label{06-08-21} Let $\hat{\alpha}^e$ be the restriction to $e(A\rtimes_\alpha \mathbb Z)e$ of the dual flow on $A\rtimes_\alpha\mathbb Z$. Let $P : A \rtimes_\alpha \mathbb Z \to A$ be the canonical conditional expectation. For each $\beta \in \mathbb R$ the map 
$$
\tau \mapsto \tau \circ P|_{e(A \rtimes_\alpha \mathbb Z)e}
$$ 
is a bijection from the set of lower semi-continuous traces $\tau$ on $A$ that satisfy 
\begin{itemize}
\item $\tau \circ \alpha = e^{-\beta} \tau$, and
\item $\tau(e) =1$,
\end{itemize}
onto the simplex of $\beta$-KMS states for $\hat{\alpha}^e$.
\end{lemma}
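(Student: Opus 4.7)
The plan is to obtain the stated bijection by composing two known correspondences: first, the correspondence between $\beta$-KMS states for the corner flow $\hat{\alpha}^e$ and $\beta$-KMS weights on the whole crossed product provided by Theorem 4.1 of \cite{Th1}, and second, the Vigand Pedersen description from Theorem 5.1 of \cite{VP} (as reformulated in Lemma 3.1 of \cite{Th3}) of $\beta$-KMS weights for the dual flow $\hat{\alpha}$ in terms of lower semi-continuous traces on $A$ satisfying the scaling condition $\tau \circ \alpha = e^{-\beta}\tau$. The composition of these bijections is exactly what is asserted.

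More concretely, I would first invoke Lemma \ref{21-06-23x}: since $A\rtimes_\alpha \mathbb Z$ is simple and $e$ is a non-zero projection, $e$ is full in $A\rtimes_\alpha \mathbb Z$, so the hypotheses of Theorem 4.1 of \cite{Th1} are satisfied. That theorem then yields an affine bijection $\omega \mapsto \tilde{\omega}$ from $\beta$-KMS states of $\hat{\alpha}^e$ onto the set of $\beta$-KMS weights $\tilde{\omega}$ for $\hat{\alpha}$ normalized by $\tilde{\omega}(e)=1$, with $\omega = \tilde{\omega}|_{e(A\rtimes_\alpha \mathbb Z)e}$.

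Next, I would recall Vigand Pedersen's description of KMS weights for the dual flow: every $\beta$-KMS weight $\tilde{\omega}$ on $A\rtimes_\alpha \mathbb Z$ is of the form $\tilde{\omega} = \tau \circ P$ for a uniquely determined lower semi-continuous densely defined trace $\tau$ on $A$ satisfying $\tau \circ \alpha = e^{-\beta}\tau$, and conversely every such trace produces a $\beta$-KMS weight this way. Under this correspondence the normalization condition $\tilde{\omega}(e)=1$ translates to $\tau(P(e))=\tau(e)=1$, since $e \in A$ and $P|_A = \id_A$.

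Composing the two bijections, one sees that $\tau \mapsto (\tau \circ P)|_{e(A\rtimes_\alpha \mathbb Z)e}$ is an affine bijection from the set of lower semi-continuous traces on $A$ satisfying $\tau \circ \alpha = e^{-\beta}\tau$ and $\tau(e)=1$ onto the $\beta$-KMS simplex for $\hat{\alpha}^e$, which is the content of the lemma. There is no real obstacle; the only point requiring a moment's care is verifying that the two normalizations agree and that the $\alpha$-scaling invariance on the trace side is precisely what Vigand Pedersen's result matches to the $\hat{\alpha}$-KMS condition, both of which are standard.
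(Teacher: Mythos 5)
Your proposal is correct and follows exactly the paper's own route: the paper likewise deduces fullness of $e$ from the simplicity established in Lemma \ref{21-06-23x}, applies Theorem 4.1 of \cite{Th1} to pass between $\beta$-KMS states for $\hat{\alpha}^e$ and $\beta$-KMS weights for $\hat{\alpha}$ normalized by $\tilde{\omega}(e)=1$, and then invokes Vigand Pedersen's description (Theorem 5.1 of \cite{VP}, Lemma 3.1 of \cite{Th3}) of such weights as $\tau \circ P$ for lower semi-continuous traces with $\tau\circ\alpha = e^{-\beta}\tau$. Your added check that the normalizations match via $P|_A = \id_A$ is a correct and welcome detail that the paper leaves implicit.
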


Since $A$ is an AF-algebra the map $\tau \mapsto \tau_*$ is a bijection from the set of lower semi-continuous traces $\tau$ on $A$ onto the set $\Hom^+(K_0(A),\mathbb R)$ of non-zero positive homomorphisms $\phi :  K_0(A) \to \mathbb R$.
In this way Lemma \ref{06-08-21} has the following

\begin{cor}\label{06-08-21a} For each $\beta \in \mathbb R$ the map $\omega \mapsto (\tilde{\omega}|_A)_*$
is an affine bijection from the set of $\beta$-KMS states for $\hat{\alpha}^e$ on $e(A \rtimes_\alpha \mathbb Z)e$ onto the set of positive homomorphisms $\phi \in \Hom^+(K_0(A),\mathbb R)$ that satisfy 
\begin{itemize} 
\item $\phi \circ \rho = e^{-\beta} \phi$, and
\item $\phi([e]) =1$.
\end{itemize}
\end{cor}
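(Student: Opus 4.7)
The plan is to combine Lemma \ref{06-08-21} with the standard identification between lower semi-continuous traces on an AF-algebra and positive homomorphisms on its ordered $K_0$-group. More precisely, Lemma \ref{06-08-21} already establishes an affine bijection between the set of $\beta$-KMS states $\omega$ for $\hat{\alpha}^e$ and the set of lower semi-continuous traces $\tau$ on $A$ satisfying $\tau \circ \alpha = e^{-\beta}\tau$ and $\tau(e) = 1$, via $\tau \mapsto \tau \circ P|_{e(A \rtimes_\alpha \mathbb Z)e}$. The inverse associates to $\omega$ the trace coming from the restriction of the dual $\beta$-KMS weight $\tilde{\omega}$ to $A$; that is, $\tilde{\omega}|_A$ is the trace $\tau$ in question. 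So the first step is just to quote Lemma \ref{06-08-21} to get an affine bijection $\omega \mapsto \tilde\omega|_A$ onto the prescribed traces on $A$.

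Next I would invoke the fact that, since $A$ is AF, the dimension map $\tau \mapsto \tau_*$ is an affine bijection from the cone of lower semi-continuous traces on $A$ onto $\Hom^+(K_0(A),\mathbb R)$, as recalled right before the corollary. Composing this with the bijection from Lemma \ref{06-08-21} gives the affine bijection $\omega \mapsto (\tilde\omega|_A)_*$ onto a certain subset of $\Hom^+(K_0(A),\mathbb R)$; it remains only to translate the two side conditions across the dimension map.

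For the condition on the automorphism, I would use that $\alpha_* = \rho$ on $K_0(A)$ by our construction. For any lower semi-continuous trace $\tau$ on $A$ and any projection $p \in A \otimes \mathcal K$ we have $(\tau \circ \alpha)_*([p]) = \tau(\alpha(p)) = \tau_*(\alpha_*([p])) = \tau_*(\rho([p]))$, so $(\tau \circ \alpha)_* = \tau_* \circ \rho$. Hence $\tau \circ \alpha = e^{-\beta}\tau$ translates precisely to $\tau_* \circ \rho = e^{-\beta}\tau_*$. Similarly, $\tau(e) = \tau_*([e])$ by definition of the dimension map, so the normalization $\tau(e) = 1$ translates to $\tau_*([e]) = 1$. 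Setting $\phi := (\tilde\omega|_A)_* = \tau_*$ then gives exactly the two conditions $\phi \circ \rho = e^{-\beta}\phi$ and $\phi([e]) = 1$ in the statement.

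No step here is a genuine obstacle, since both bijections and both identifications are already available; the only mild care needed is in verifying the direction of the intertwining on $K_0$, i.e.\ that $\alpha_* = \rho$ (not $\rho^{-1}$), so that the sign conventions in $\tau \circ \alpha = e^{-\beta}\tau$ and $\phi \circ \rho = e^{-\beta}\phi$ match. This is immediate from the choice of $\alpha$ lifting $\rho$ that was fixed at the start of this subsection.
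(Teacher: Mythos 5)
Your proposal is correct and matches the paper's route exactly: the corollary is stated there as an immediate consequence of Lemma \ref{06-08-21} together with the bijection $\tau \mapsto \tau_*$ between lower semi-continuous traces on the AF-algebra $A$ and $\Hom^+(K_0(A),\mathbb R)$, with the two side conditions translated via $\alpha_* = \rho$ and $\tau(e) = \tau_*([e])$, just as you do. Your explicit verification that $(\tau\circ\alpha)_* = \tau_*\circ\rho$ is the only detail the paper leaves unspoken, and it is carried out correctly.
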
 

Another consequence of Lemma \ref{06-08-21} is the following

 \begin{cor}\label{10-01-23e} Let $\omega$ be a $\KMS_\infty$ or a $\KMS_{-\infty}$ state for $\hat{\alpha}^e$. There is a trace state $\tau$ on $eAe$ such that
$$
\omega = \tau \circ P|_{ e(A \rtimes_\alpha \mathbb Z)e}.
$$
\end{cor}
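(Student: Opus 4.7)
The plan is to combine the weak* approximation defining $\KMS_{\pm\infty}$ states with the description of $\beta$-KMS states provided by \reflemma{06-08-21}. The argument is identical in the $+\infty$ and $-\infty$ cases, since only the bijective correspondence between $\beta$-KMS states and traces is used, not the sign of $\beta$. Pick a sequence $\{\omega_n\}$ converging weak* to $\omega$, where each $\omega_n$ is a $\beta_n$-KMS state for $\hat{\alpha}^e$ and $|\beta_n|\to\infty$. By \reflemma{06-08-21} there exist lower semi-continuous traces $\tau_n$ on $A$ with $\tau_n(e)=1$ such that $\omega_n(x) = \tau_n(P(x))$ for every $x \in e(A\rtimes_\alpha\mathbb Z)e$.

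The first step is to produce the candidate trace state on $eAe$ as the weak* limit of the restrictions $\omega_n|_{eAe}$. Since $P$ acts as the identity on $eAe$, one has $\omega_n|_{eAe} = \tau_n|_{eAe}$, and this is a tracial state on $eAe$ because $\tau_n$ is a trace on $A$ with $\tau_n(e) = 1$. The set of tracial states on $eAe$ is weak*-closed in the state space, so $\tau := \omega|_{eAe} = \lim_n \omega_n|_{eAe}$ is itself a tracial state on $eAe$.

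The second step, the verification of the formula, is then immediate. For $x \in e(A\rtimes_\alpha\mathbb Z)e$ the element $P(x)$ lies in $eAe$, so the formula from \reflemma{06-08-21} can be rewritten as
$$
\omega_n(x) = (\omega_n|_{eAe})(P(x)).
$$
Letting $n\to\infty$, the left side tends to $\omega(x)$ by weak* convergence, while the right side tends to $\tau(P(x))$ since $P(x)$ is a fixed element of $eAe$ and $\omega_n|_{eAe}\to \tau$ weak*. Hence $\omega = \tau \circ P|_{e(A\rtimes_\alpha \mathbb Z)e}$. No serious obstacle arises; the only point worth noting is that passing to the restriction $\omega_n|_{eAe}$ eliminates the scale factor $e^{-\beta_n}$ appearing in \reflemma{06-08-21}, so the divergence of $|\beta_n|$ never enters the limit.
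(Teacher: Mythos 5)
Your proof is correct and follows the same approach as the paper, whose own argument is the one-line observation that a weak* limit of states of the form $\tau\circ P$ with $\tau$ a trace state on $eAe$ must again be of this form; you have simply filled in the routine details (passing from the traces on $A$ of Lemma \ref{06-08-21} to their restrictions to $eAe$, weak* closedness of the trace states, and the pointwise limit $\omega_n(x)=(\omega_n|_{eAe})(P(x))\to\tau(P(x))$). Your closing remark that restricting to $eAe$ removes the scaling $e^{-\beta_n}$, so the divergence of $|\beta_n|$ plays no role, is also accurate.
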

\begin{proof} A state of $ e(A \rtimes_\alpha \mathbb Z)e$ which is the weak* limit of a sequence of states that have the form $\tau \circ P$ for some trace state $\tau$ on $eAe$ must itself be of this form.
\end{proof}
 
Lemma \ref{06-08-21} and its two corollaries give the tools we shall use to describe both the $\KMS$ bundle of $\hat{\alpha}^e$ and the sets of $\KMS_\infty$ states and $\KMS_{-\infty}$ states for $\hat{\alpha}^e$ from the preceding K-theory constructions.

\subsection{Recovering $(S,\pi)$ and $D_\pm$}

\begin{lemma}\label{02-09-21d} The $\KMS$ bundle $(S^{\hat{\alpha}^e},\pi^{\hat{\alpha}^e})$ of $\hat{\alpha}^e$ is isomorphic to $(S,\pi)$.
\end{lemma}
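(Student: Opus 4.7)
The plan is to exhibit an explicit fiber-preserving homeomorphism $\chi : S \to S^{\hat{\alpha}^e}$ that is affine on each fiber, by using the parametrization of the $\beta$-KMS states of $\hat{\alpha}^e$ developed in Corollary \ref{06-08-21a} together with the identification of the relevant positive homomorphisms carried out in Lemma \ref{27-08-21x}.

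First I would define, for each $s \in S$ with $\beta := \pi(s)$, the evaluation $\ev_s : \Gamma \to \mathbb R$. It is a positive homomorphism satisfying $\ev_s(v) = 1$ and $\ev_s \circ \rho = e^{-\beta}\ev_s$, so Corollary \ref{06-08-21a} assigns to it a unique $\beta$-KMS state $\omega_s$ for $\hat{\alpha}^e$ such that $(\tilde{\omega}_s|_A)_* = \ev_s$. Setting $\chi(s) := \omega_s$ we get a map that carries $\pi^{-1}(\beta)$ into $S^{\hat{\alpha}^e}_\beta$; surjectivity on each fiber is exactly Lemma \ref{27-08-21x}, and injectivity follows because $\mathcal A(S,\pi)$ separates points of $S$ and any separating element can, by using a cut-off of the form $\psi^0_k\circ \pi$ and then invoking Lemma \ref{01-09-21e}, be approximated by an element of $G$, which means that the $\ev_s$ separate points of $S$ as $s$ varies. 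Affinity on each fiber is automatic, since for every $(\xi,g)\in\Gamma$ the restriction $g|_{\pi^{-1}(\beta)}$ is affine by definition of $\mathcal A(S,\pi)$ and the bijection in Corollary \ref{06-08-21a} is itself affine.

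Next I would verify continuity of $\chi$. If $s_n \to s$ in $S$, then $g(s_n) \to g(s)$ for every $g \in G$, so $\ev_{s_n} \to \ev_s$ pointwise on $\Gamma$. Since $A$ is AF, the linear span of its projections is norm-dense, and for a projection $p \in A$ one has $\tau_{s_n}(p) = \ev_{s_n}([p]) \to \ev_s([p]) = \tau_s(p)$. The traces $\tau_{s_n}$ restrict to states on the unital AF-subalgebra $eAe$ since $\tau_{s_n}(e) = \ev_{s_n}(v) = 1$, hence are uniformly bounded, so the pointwise convergence on a dense subspace gives weak* convergence $\tau_{s_n}|_{eAe} \to \tau_s|_{eAe}$. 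Since $\omega_{s_n} = \tau_{s_n}\circ P|_{e(A\rtimes_\alpha\mathbb Z)e}$ and $P$ carries $e(A\rtimes_\alpha\mathbb Z)e$ into $eAe$, this shows $\omega_{s_n} \to \omega_s$ weak*.

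Finally, continuity of $\chi^{-1}$ follows from the properness of $\pi$ and Lemma \ref{01-01-23}. Using Proposition \ref{01-01-23e} it suffices to check that $\chi^{-1} : \bigcup_\beta S^{\hat{\alpha}^e}_\beta \to S$ is continuous for the weak* topology. Given $\omega_n \to \omega$ with $\omega_n \in S^{\hat{\alpha}^e}_{\beta_n}$, Lemma \ref{01-01-23} yields $\beta_n \to \beta := \pi^{\hat{\alpha}^e}(\omega)$, so the sequence $s_n := \chi^{-1}(\omega_n)$ satisfies $\pi(s_n) \to \beta$ and therefore remains in a compact subset of $S$ by properness of $\pi$. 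Any subsequential limit $s'$ satisfies $\pi(s') = \beta$ and $\chi(s') = \omega$ by continuity of $\chi$, hence $s' = \chi^{-1}(\omega)$ by injectivity on fibers; so $s_n \to \chi^{-1}(\omega)$. The main point that requires care is the passage from pointwise convergence of $\ev_{s_n}$ on $\Gamma$ to weak* convergence of the KMS states, and for this the key input is the density of $G$ in $\mathcal A_{\mathbb R}(S,\pi)$ from Lemma \ref{01-09-21e} together with the AF-structure of $A$.
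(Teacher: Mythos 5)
Your proposal is correct and is in substance the paper's own proof: the paper sets up the same fiberwise correspondence through Corollary \ref{06-08-21a} and Lemma \ref{27-08-21x} (merely in the opposite direction, as a map $\Phi : S^{\hat{\alpha}^e} \to S$) and then delegates the verification that it is an isomorphism of proper simplex bundles to the proof of Lemma 4.19 in \cite{ET}, which is precisely the list of checks you write out -- your appeal to Lemma \ref{01-01-23} and Proposition \ref{01-01-23e} for continuity of the inverse, and to Lemma \ref{01-09-21e} together with the AF-structure for continuity of $\chi$, is a legitimate way to fill in those delegated details using the machinery of Section \ref{bundle}, the flow $\hat{\alpha}^e$ being simple by Lemma \ref{21-06-23x}. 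One small repair in your injectivity step: the cutoff $\psi^0_k\circ \pi$ equals $1$ only near $\pi^{-1}(0)$, so for two points in a fiber over $\beta \neq 0$ you should instead multiply the separating $f \in \mathcal A(S,\pi)$ by $\varphi \circ \pi$ for some $\varphi \in C_c(\mathbb R)$ with $\varphi \equiv 1$ near $\beta$ (this product still lies in $\mathcal A_c(S,\pi) \subseteq \mathcal A_\mathbb R(S,\pi)$ since $\pi$ is proper and $\varphi\circ\pi$ is constant on fibers), after which Lemma \ref{01-09-21e} applies exactly as you say.
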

\begin{proof} Let $\omega$ be a $\beta$-KMS state for $\hat{\alpha}^e$. By Corollary \ref{06-08-21a} $(\tilde{\omega}|_A)_*$ is a positive homomorphism $\Gamma \to \mathbb R$ to which Lemma \ref{27-08-21x} applies and gives us an element $s \in \pi^{-1}(\beta)$ such that $(\tilde{\omega}|_A)_* = \ev_s$. This results in a map $\Phi : S^{\hat{\alpha}^e} \to S$ such that $\Phi( \omega) = s$. Except for changes in the notation the proof of Lemma 4.19 in \cite{ET} can be repeated to show that $\Phi$ is an isomorphism of proper simplex bundles.
\end{proof}

\begin{prop}\label{21-06-23} The set $\KMS_\infty(\hat{\alpha}^e)$ of $\KMS_\infty$ states for $\hat{\alpha}^e$ is homeomorphic to $D_+$ and the set $\KMS_{-\infty}(\hat{\alpha}^e)$ of $\KMS_{-\infty}$ states for $\hat{\alpha}^e$ is homeomorphic to $D_-$.
\end{prop}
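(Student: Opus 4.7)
The strategy is to factor the Stone-\v{C}ech surjection from Lemma \ref{05-08-23} through $r_+:\partial S_+\to D_+$. By Lemma \ref{02-09-21d} the KMS bundle of $\hat{\alpha}^e$ is isomorphic to $(S,\pi)$, so Lemma \ref{05-08-23} supplies continuous surjections $\chi_\pm:\partial S_\pm\to\KMS_{\pm\infty}(\hat{\alpha}^e)$. I shall show that $\chi_+$ descends along $r_+$ to an injective continuous map $\omega_+:D_+\to\KMS_\infty(\hat{\alpha}^e)$, which is then automatically a homeomorphism between compact Hausdorff spaces; the case of $D_-$ and $\chi_-$ is identical, using $r_-$, $A(-)$ and $\mathcal G(-)$ in place of their plus counterparts.

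For the factorisation take $\xi,\xi'\in\partial S_+$ with $r_+(\xi)=r_+(\xi')$. By Corollary \ref{10-01-23e} each of $\chi_+(\xi)$ and $\chi_+(\xi')$ has the form $\tau\circ P$ for a trace state $\tau$ on $eAe$, hence is determined by the values of the corresponding positive homomorphism $\phi:\Gamma\to\mathbb R$ on $K_0$-classes of projections in $M_N(eAe)$, i.e.\ on elements $x\in\Gamma^+$ with $x\le Nv$ for some $N$. Realising $\chi_+(\xi)$ as $\lim_n\omega_n$ for $\beta_n$-KMS states $\omega_n$ with $\beta_n\to\infty$, Corollary \ref{06-08-21a} and Lemma \ref{27-08-21x} associate to each $\omega_n$ a point $s_n\in\pi^{-1}(\beta_n)$ such that the induced map on $K_0$ is $\ev_{s_n}$; passing to a subnet I may assume $s_n\to\xi$ in $\beta S_+$. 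For $x=(\eta,g)\in\Gamma^+$ with $x\le Nv$ the function $g\in G$ is bounded, so $\phi(x)=\lim_n g(s_n)$ equals the Stone-\v{C}ech extension $\hat g(\xi)$. Near $+\infty$ the function $g$ coincides with an element $g_+\in\mathcal G(+)$; after multiplication by a suitable power $(1-e^{-\pi})^N$ this becomes a Laurent polynomial $\sum_k p_k e^{k\pi}\in G(+)$ whose boundedness forces $\mathbb L_+(p)\le 0$, so $\hat g(\xi)=R_+(p_0)(\xi)$. Since $p_0\in A(+)=\mathcal C+\psi^+_1\circ\pi\cdot l_+(C_+)$ and $R_+$ annihilates $\mathcal C$, one has $R_+(p_0)\in C_+\subseteq r_+^*(C_\mathbb R(D_+))$, so $\hat g(\xi)$ depends only on $r_+(\xi)$. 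Hence $\chi_+(\xi)=\chi_+(\xi')$, and since $r_+$ is a continuous surjection between compact Hausdorff spaces this yields a continuous surjection $\omega_+:D_+\to\KMS_\infty(\hat{\alpha}^e)$ with $\omega_+\circ r_+=\chi_+$.

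For injectivity take distinct $d,d'\in D_+$ and pick $f\in C_\mathbb R(D_+)$ with $f(d)\neq f(d')$. By density of $C_+$ in $r_+^*(C_\mathbb R(D_+))$, Lemma \ref{helsingor}(b), choose $b\in C_+$ so close to $r_+^*(f)$ in sup-norm that the $f'\in C_\mathbb R(D_+)$ with $b=r_+^*(f')$ still satisfies $f'(d)\neq f'(d')$. Put $g_0:=\psi^+_1\circ\pi\cdot l_+(b)\cdot\psi^+_2\circ\pi$; since $\psi^+_1\circ\pi\cdot l_+(b)\in A(+)\subseteq\mathcal G(+)$, we have $g_0\in\mathcal G(+)\cdot\psi^+_2\circ\pi\subseteq G_2\subseteq G$, and $g_0$ vanishes on $\pi^{-1}((-1/4,1/4))$. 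Fix an integer $K>\|l_+(b)\|_\infty$; by the normalisation $L(1)\equiv 1$ on $\pi^{-1}([-1,1])$, $\hat L(Ku^{(0)})\equiv K$ on $\pi^{-1}((-1/4,1/4))$, so $(Ku^{(0)},g_0+K)\in\Gamma$. A direct check along the lines of Lemma \ref{21-07-23} yields $(Ku^{(0)},g_0+K)\in\Gamma^+$ and $Nv-(Ku^{(0)},g_0+K)\in\Gamma^+$ for all sufficiently large $N$, so $(Ku^{(0)},g_0+K)=[p]$ for a projection $p$ in $M_N(eAe)$. The factorisation computation then gives $\omega_d(p)=f'(d)+K\neq f'(d')+K=\omega_{d'}(p)$, proving injectivity. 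The main obstacle is the factorisation step, which rests on the design $A(\pm)=\mathcal C+\psi^\pm_1\circ\pi\cdot l_\pm(C_\pm)$: the Laurent-polynomial structure of $\mathcal G(\pm)$ confines the $\pm\infty$-asymptote of any bounded element of $G$ to $R_\pm(A(\pm))=C_\pm$, which by design lies in $r_\pm^*(C_\mathbb R(D_\pm))$.
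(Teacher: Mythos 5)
Your proof is correct, and its heart coincides with the paper's: the decisive step in both arguments is that for a bounded class $(\xi,g)\in K_0(eAe)$ the asymptotics of $g$ at $+\infty$ reduce, after multiplying by $(1-e^{-\pi})^{N}$ and discarding the strictly negative Laurent degrees (boundedness forcing $\mathbb L_+\leq 0$, via Lemma \ref{roervigx}), to $R_+(p_0)\in C_+\subseteq r_+^*(C_\mathbb R(D_+))$ --- this is exactly Observation \ref{dragoer10} in the paper. What differs is the architecture, and the difference is genuine. The paper builds a map $\Phi:\KMS_\infty(\hat{\alpha}^e)\to D_+$ by attaching to each $\KMS_\infty$ state a condensation point $\eta\in\partial S_+$ of approximating points $s_i$, uses the density of $C_+$ in $r_+^*(C_\mathbb R(D_+))$ (via \eqref{dragoer11}) to see that $r_+(\eta)$ is well defined, and then checks continuity, injectivity and surjectivity separately --- surjectivity requiring a fresh construction of a net of KMS states from Corollary \ref{06-08-21a}. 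You instead run the map in the opposite direction, descending the surjection $\chi_+:\partial S_+\to\KMS_\infty(\hat{\alpha}^e)$ of Lemma \ref{05-08-23} along $r_+$; this buys surjectivity and continuity for free, since $r_+$ is a quotient map of compact Hausdorff spaces, at the price of proving injectivity by hand, which you do with the explicit order-unit-dominated class $(Ku^{(0)},g_0+K)$ --- a concrete substitute for the paper's density argument. One small repair is needed in your factorisation step: from an arbitrary sequence $\omega_n\to\chi_+(\xi)$ you cannot pass to a subnet of the associated points $s_n$ converging to the \emph{given} $\xi$; a condensation point may be a different boundary point $\xi''$ with $\chi_+(\xi'')=\chi_+(\xi)$, and your computation would then output $\hat g(\xi'')$ rather than $\hat g(\xi)$. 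The fix is immediate: choose from the outset a net $s_\alpha\in S_+$ with $s_\alpha\to\xi$ in $\beta S_+$ (possible since $S_+$ is dense there), and observe that $\omega\mapsto(\omega|_{eAe})_*(x)$ is weak* continuous on the closure of $\chi(S_+)$ in the state space, so the Stone-\v{C}ech extension property gives $(\chi_+(\xi)|_{eAe})_*(x)=\hat g(\xi)$ directly; combined with Corollary \ref{10-01-23e}, which reduces $\chi_+(\xi)$ to its tracial restriction to the AF-algebra $eAe$, your factorisation and the rest of the argument go through unchanged.
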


\begin{proof} Except for notational differences the $+\infty$-case and the $-\infty$-case are identical and we give here the details only in the $+\infty$-case. Let $\omega \in \KMS_\infty(\hat{\alpha}^e)$. There is then a sequence $\{\omega_i\}_{i=1}^\infty$ of states on $e(A \rtimes_\alpha \mathbb Z)e$ such that $\omega_i$ is a $\beta_i$-KMS state for $\hat{\alpha}^e$, $\lim_{i \to \infty} \beta_i = \infty$ and $\lim_{i \to \infty} \omega_i = \omega$ in the weak* topology. It follows from Lemma \ref{06-08-21} that the restriction of $\omega_i$ to $eAe$ is a trace state which is the restriction to $eAe$ of a lower semi-continuous trace on $A$ with the properties specified in Lemma \ref{06-08-21}.
Note that we can and will identify $K_0(eAe)$ with the subgroup of $\Gamma$ consisting of the elements $x \in \Gamma$ for which there is an $l \in \mathbb N$ such that $-l v \leq x \leq lv$ in $(\Gamma,\Gamma^+)$. Thus, when $(\xi,g) \in K_0(eAe)$ there is an $l \in \mathbb N$ such that $|g(s)| \leq l$ for all $s \in S$. In particular, $R_+(g) \in C_\mathbb R(\partial S_+)$ is defined for all $(\xi,g) \in K_0(eAe)$. It follows from Lemma \ref{27-08-21x} that there is an element $s_i \in S$ such that
$$
({\omega_i|_{eAe}})_*(\xi,g) = g(s_i)
$$
when $(\xi,g) \in K_0(eAe)$. Let $\eta$ be a condensation point of $\{s_i\}$ in the Stone-\v{C}ech compactification of $S$. Then $\eta \in \partial S_+$ since $\lim_{i \to \infty} \pi(s_i) = \lim_{i \to \infty} \beta_i =  \infty$. Since $K_0(eAe)$ is countable there is a subsequence $\{s_{i_j}\}$ of $\{s_i\}$ such that $\lim_{j \to \infty} g(s_{i_j}) = R_+(g)(\eta)$ for all $(\xi,g) \in K_0(eAe)$. Since $\lim_{i \to \infty} \omega_i|_{eAe} = \omega|_{eAe}$ we find that
\begin{equation}\label{dragoer5}
\begin{split}
&\left(\omega|_{eAe}\right)_*(\xi,g) = \lim_{i \to \infty}\left(\omega_i|_{eAe}\right)_*(\xi,g) = \lim_{j \to \infty} g(s_{i_j}) = R_+(g)(\eta) 
\end{split}
\end{equation}
for all $(\xi,g) \in K_0(eAe)$. 

Let $(\xi,g) \in K_0(eAe)$. By definition of $G$ and $\mathcal G(+)$ there is an $N \in \mathbb N$ and an element $\overline{g} \in G(+)$ such that $g^+ = (1 - e^{-\pi})^{-N} \overline{g}$. Decompose $\overline{g}$ as in \eqref{holbaek3}. Then
$$
g^+ = \sum_{n \in \mathbb Z} (1-e^{-\pi})^{-N} \overline{g}_n e^{n \pi} .
$$
We claim that
\begin{obs}\label{dragoer10}
$\left(\omega|_{eAe}\right)_*(\xi,g) = R_+(\overline{g}_0)(\eta)$ for all $(\xi,g) \in K_0(eAe)$.
\end{obs}

To see this, choose $l \in \mathbb N$ such that $- l < g <l$ in $(G,G^+)$. Assume for a contradiction that $\mathbb L_+(g) > 0$. Then $\mathbb L_+(l-g) = \mathbb L_+(g) > 0$ and hence $R_+(e^{-\mathbb L_+(g)\pi}(l- g)^+)$ is strictly positive on $\partial S_+$. This implies that there is an $\epsilon > 0$ and a $T > 0$ such that 
$$
e^{-\mathbb L_+(g)\pi(s)}(l-g(s)) \geq \epsilon
$$
for all $s \in \pi^{-1}([T,\infty))$. Since $\lim_{\pi(s) \to \infty} e^{-\mathbb L_+(g)\pi(s)} = 0$ there is an $R > 0$ such that $\frac{\epsilon}{2} -e^{-\mathbb L_+(g)\pi(s)}g(s) \geq \epsilon$ for $\pi(s) \geq R$ and hence 
$$
g(s) \leq -e^{\mathbb L_+(g)\pi(s)}\frac{\epsilon}{2}
$$ 
for $\pi(s) \geq R$, contradicting that $-l < g$ in $(G,G^+)$. It follows that $\mathbb L_+(g) \leq 0$, and that
$$
\psi^+_1\circ \pi g^+ = c + \sum_{n\leq 0} (1-e^{-\pi})^{-N}\psi^+_1\circ \pi \overline{g}_n e^{n\pi} 
$$
for some $c \in \mathcal A_c(S,\pi)$. Since 
$$
R_+(c + \sum_{n\leq -1} (1-e^{-\pi})^{-N} \psi^+_1\circ \pi\overline{g}_n e^{n\pi}) = 0
$$
we conclude that $R_+(g) = R_+(\overline{g}_0)$. In this way Observation \ref{dragoer10} follows from \eqref{dragoer5}.

 By definition of $A(+)$ there is a unique $h \in C_\mathbb R(D_+)$ such that $R_+(\overline{g}_0) = h \circ r_+$ and hence
\begin{equation}\label{dragoer12}
\left(\omega|_{eAe}\right)_*(\xi,g) = h(r_+(\eta)) 
\end{equation}
by Observation \ref{dragoer10}. In this way the element $\omega \in \KMS_\infty(\hat{\alpha}^e)$ gives rise to an element $r_+(\eta) \in D_+$. To see that this element is unique, let $k \in C_\mathbb R(D_+)$ be an element such that $k \circ r_+\in C_+$. Then $\psi^+_1 \circ \pi l_+(k \circ r_+)\in G$, 
$$
(0,\psi^+_1 \circ \pi l_+(k \circ r_+)) \in K_0(eAe), 
$$
and \eqref{dragoer12} implies
\begin{equation}\label{dragoer11}
 (\omega|_{eAe})_*((0,\psi^+_1 \circ \pi l_+(k \circ r_+))) =k(r_+(\eta)).
\end{equation}
Since $C_+$ is dense in $r^*_*(C_\mathbb R(D_+))$ by construction, the set
$$
\left\{k \in C_\mathbb R(D_+) : \ k \circ r_+ \in C_+\right\}
$$
is dense in $C_\mathbb R(D_+)$ and hence \eqref{dragoer11} shows that $r_+(\eta) \in D_+$ is uniquely determined by $\omega$. We can therefore define a map 
$$
\Phi : \KMS_\infty(\hat{\alpha}^e) \to D_+
$$ 
such that
$$
\Phi(\omega) = r_+(\eta) .
$$ 
$\Phi$ is continuous: If $\lim_{n \to \infty} \omega_n = \omega$ in $\KMS_\infty(\hat{\alpha}^e)$ we observe that $\omega_n|_{eAe}$ and $\omega|_{eAe}$ are all trace states on $eAe$ by Corollary \ref{10-01-23e}. It follows therefore that
$$
\lim_{n \to \infty} (\omega_n|_{eAe})_*(x) = (\omega|_{eAe})_*(x)
$$ 
for all $x \in K_0(eAe)$, and then from \eqref{dragoer11} that $\lim_{n \to \infty} k(\Phi(\omega_n)) = k(\Phi(\omega))$ for all $k \in C_\mathbb R(D_+)$, implying that $\lim_{n \to \infty} \Phi(\omega_n) = \Phi(\omega)$ in $D_+$. 

$\Phi$ is injective: If $\omega_1,\omega_2\in \KMS_\infty(\hat{\alpha}^e)$ and $\Phi(\omega_1) = \Phi(\omega_2)$, it follows from \eqref{dragoer12} that $(\omega_1|_{eAe})_* =(\omega_2|_{eAe})_*$ which implies that $\omega_1|_{eAe} = \omega_2|_{eAe}$ since $eAe$ is an AF-algebra. It follows then from Corollary \ref{10-01-23e} that $\omega_1 = \omega_2$, implying that $\Phi$ is injective.

$\Phi$ is surjective: Let $y \in D_+$. Then $y = r_+(\eta)$ for some $\eta \in \partial S_+$ since $r_+$ is surjective by assumption. There is a net $\{s_i\}$ in $S$ such that $\lim_{i \to \infty} \pi(s_i) = \infty$ and $\lim_{i \to \infty} a( s_i) = R_+(a)(\eta)$ for all $a \in \mathcal A_b(S,\pi)$. Set $\beta_i = \pi(s_i)$. It follows from Corollary \ref{06-08-21a} that there is a $\beta_i$-KMS state $\omega_i$ for $\hat{\alpha}^e$ on $e(A\rtimes_\gamma\mathbb Z)e$ such that $(\omega_i|_{eAe})_*(\xi , g) = g(s_i)$ for all $(\xi,g) \in K_0(eAe)$. A condensation point of $\{\omega_i\}$ in the weak* topology is then a $\KMS_\infty$-state $\omega \in \KMS_\infty(\hat{\alpha}^e)$ such that $(\omega|_{eAe})_*(\xi,g) = R_+(g)(\eta)$ for all $(\xi,g) \in K_0(eAe)$. Hence $\Phi(\omega) = r_+(\eta) =y$.
\end{proof}

 To illustrate what we have proved so far and what is still lacking, choose for each $k \in \mathbb Z$ a non-empty metrizable Choquet simplex $\Delta_k$ and let $D_+$ and $D_-$ be a pair of non-empty compact metric spaces. The disjoint union
$$
S := \bigsqcup_{k \in \mathbb Z} \Delta_k,
$$
together with the map $\pi : S \to \mathbb R$ defined such that $\pi(s) = k$ for $s \in \Delta_k$, constitute a proper simplex bundle $(S,\pi)$. It follows from what we have proved above that there is an AF-algebra $A$, an automorphism $\alpha$ of $A$ and a projection $e \in A$ such that the $\KMS$ bundle of $\hat{\alpha}^e$ is isomorphic to $(S,\pi)$ while $\KMS_\infty(\hat{\alpha}^e)$ is homeomorphic to $D_+$ and  $\KMS_{-\infty}(\hat{\alpha}^e)$ is homeomorphic to $D_-$.\footnote{Some parts of the construction are in fact not really necessary for this; only for the arguments in the next section.} The algebra $e(A \rtimes_\alpha \mathbb Z)e$ on which the flow $\hat{\alpha}^e$ acts may vary with the choice of simplixes and spaces $D_\pm$, however. In the next section we fix this by using the recent powerful classification results for simple unital $C^*$-algebras. It will follow that no matter how we choose $\Delta_k, \ k\neq 0$, and $D_\pm$, we can arrange that $e(A \rtimes_\alpha \mathbb Z)e$ is isomorphic to any given infinite dimensional simple AF-algebra whose tracial state space is affinely homeomorphic to $\Delta_0$.

\subsection{The Elliott invariant - application of the classification theorem}\label{Elliott}

In this section we show how to choose the ingredients in the preceding in such a way that the Elliott invariant of $e(A \rtimes_\alpha \mathbb Z)e$ becomes isomorphic to that of the given AF-algebra $B$ in Theorem \ref{main}. We can then use classification results for simple unital $C^*$-algebras to deduce that with these choices $e(A \rtimes_\alpha \mathbb Z)e$ becomes isomorphic to $B$. This will be done basically in the same way as in \cite{Th3}, but we must check that the arguments also work with the new dimension group.

Since $(\Gamma,\Gamma^+)$ has large denominators by Lemma \ref{aabenraa1} we can use Lemma 3.4 of \cite{Th3} to arrange that $\alpha$ has the following additional properties:
 \begin{poem}\label{07-09-21c}
\begin{itemize}
\item[(A)] The restriction map 
$\mu \ \mapsto \ \mu|_A$
is a bijection from traces $\mu $ on $A \rtimes_{\alpha} \mathbb Z$ onto the $\alpha$-invariant traces on $A$, and
\item[(B)] $A \rtimes_{\alpha} \mathbb Z$ is $\mathcal Z$-stable; that is $(A \rtimes_{\alpha} \mathbb Z)\otimes \mathcal Z \simeq A \rtimes_{\alpha} \mathbb Z$ where $\mathcal Z$ denotes the Jiang-Su algebra, \cite{JS}.
\end{itemize} 
\end{poem}

Condition (A) is crucial in order to ensure that $e(A \rtimes_\alpha \mathbb Z)e$ does not have more trace states than $B$. The condition holds if and only if all traces on $A \rtimes_\alpha \mathbb Z$ are invariant under the dual flow $\hat{\alpha}$. Condition (B) is necessary to ensure that $e(A \rtimes_\alpha \mathbb Z)e$ is covered by the current classification results for simple $C^*$-algebras.

To simplify notation, set
$$
C := A\rtimes_\alpha \mathbb Z  \ .
$$
$C$ is simple by Lemma \ref{21-06-23x}.
It follows from the Pimsner-Voiculescu exact sequence, \cite{PV}, that we can identify $K_0(C)$, as a group, with the quotient
$$
\Gamma/(\id - \rho)(\Gamma)  ,
$$
in such a way that the map $\iota_* : K_0(A) \to K_0(C)$ induced by the inclusion $\iota : A \to C$ becomes the quotient map
$$
q : \Gamma \to  \Gamma/(\id - \rho)(\Gamma) .
$$
Define $S_0 : \Gamma \to H$ such that 
$$
S_0(\xi, g) = \Sigma(\xi ) .
$$

\begin{lemma}\label{01-09-21k} $\ker S_0 = (\id- \rho)(\Gamma)$.
\end{lemma}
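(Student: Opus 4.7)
My approach is to verify the two inclusions separately.

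The inclusion $(\id-\rho)(\Gamma)\subseteq \ker S_0$ is immediate. For any $(\xi,g)\in\Gamma$,
$$S_0\bigl((\id-\rho)(\xi,g)\bigr) \ = \ \Sigma(\xi-\sigma\xi) \ = \ \Sigma(\xi)-\Sigma(\sigma\xi) \ = \ 0,$$
since merely reindexing a finitely supported sequence does not change its sum.

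For the reverse inclusion, the plan is to construct an explicit preimage. Given $(\xi,g)\in\Gamma$ with $\Sigma(\xi)=0$, write $\xi=(h_n)_{n\in\mathbb Z}$ and set $\eta_n:=\sum_{k\geq n}h_k$. Then $\eta\in\oplus_{\mathbb Z}H$ is finitely supported with $\eta-\sigma\eta=\xi$, and the intertwining $\rho_0\circ\hat L=\hat L\circ\sigma$ yields the pointwise identity $\hat L(\xi)=\hat L(\eta)-\hat L(\sigma\eta)=(1-e^{-\pi})\hat L(\eta)$ on all of $S$. Fix a neighborhood $U=(-r,r)$ of $0$ with $g=\hat L(\xi)$ on $\pi^{-1}(U)$ (such $U$ exists by the definition of $\Gamma$), and choose $k\in\mathbb N$ with $1/k<r$. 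Define
$$g' \ := \ \hat L(\eta)\,\psi^0_k\circ\pi \ + \ \frac{g-\hat L(\xi)\,\psi^0_k\circ\pi}{1-e^{-\pi}}.$$
The numerator of the fraction vanishes on $\pi^{-1}([-1/(2k),1/(2k)])$, where $\psi^0_k\circ\pi=1$ and $g=\hat L(\xi)$, so the quotient extends continuously to all of $S$. A direct computation gives $(1-e^{-\pi})g'=g$, and on $\pi^{-1}([-1/(2k),1/(2k)])$ one has $g'=\hat L(\eta)$; hence $(\eta,g')\in\Gamma$ and $(\id-\rho)(\eta,g')=(\xi,g)$.

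The main technical point is to verify that $g'$ lies in $G$. Writing $g=g_k+h_0$ with $g_k=g^-_k\psi^-_k\circ\pi+f_k\psi^0_k\circ\pi+g^+_k\psi^+_k\circ\pi\in G_k$ and $h_0\in G_{00}$, the expansion
$$g' \ = \ \hat L(\eta)\psi^0_k\circ\pi \ + \ \tfrac{g^-_k}{1-e^{-\pi}}\psi^-_k\circ\pi \ + \ \tfrac{g^+_k}{1-e^{-\pi}}\psi^+_k\circ\pi \ + \ \tfrac{h_0}{1-e^{-\pi}} \ + \ \tfrac{(f_k-\hat L(\xi))\psi^0_k\circ\pi}{1-e^{-\pi}}$$
places the first summand in $\hat L(\oplus_{\mathbb Z}H)\psi^0_k\circ\pi\subseteq G_k$, the next two in $\mathcal G(\pm)\psi^\pm_k\circ\pi\subseteq G_k$ (using $(1-e^{-\pi})^{-1}\in\mathcal G(\pm)$, in the minus case via $(1-e^{-\pi})^{-1}=-e^\pi(1-e^\pi)^{-1}$), and the fourth in $G_{00}$ by \eqref{07-09-21e}. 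The fifth, correction summand is compactly supported in $\pi^{-1}([-1/k,-1/(2k)]\cup[1/(2k),1/k])$ and hence lies in $\mathcal A_{00}(S,\pi)$. This is the hard part: it is a priori only in $\mathcal A_{00}$, not in $G_{00}$. The resolution is that only countably many such correction terms arise as $(\xi,g)\in\Gamma$ and $k$ vary, so the construction of $G_{00}$ in Subsection~\ref{construction} can be refined by an additional countable enlargement closed under $\rho_0^{\pm 1}$ and $(1-\rho_0)^{\pm 1}$ that contains them all, while preserving the density property \ref{07-09-21} and the closures $\rho_0(G_{00})=G_{00}$ and $(\id-\rho_0)(G_{00})=G_{00}$. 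With this refinement in force the correction term belongs to $G_{00}$ and $g'\in G$, which finishes the proof.
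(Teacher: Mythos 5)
Your overall route is the paper's route. The paper also solves $(\id - \sigma)(\xi') = \xi$ (citing Lemma 4.6 of \cite{Th3}, which is exactly your partial-sum formula $\eta_n = \sum_{k \geq n} h_k$), and then produces a preimage of $g$ under $\id - \rho_0$; it merely packages the division piecewise rather than globally: it first rewrites $g = \hat{L}(\xi)\psi^0_k\circ\pi + g^-\psi^-_k\circ\pi + g^+\psi^+_k\circ\pi + g_0$ with $g_0 \in G_{00}$, then inverts $\id - \rho_0$ on each summand, using that multiplication by $1-e^{\mp\pi}$ is invertible on $\mathcal G(\pm)$ and that $(\id-\rho_0)(G_{00}) = G_{00}$, i.e.\ \eqref{07-09-21e}. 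Up to this repackaging your $g'$ is essentially the paper's $g''$, and your verifications that $(1-e^{-\pi})g' = g$, that $g' = \hat{L}(\eta)$ near $\pi^{-1}(0)$ (so \eqref{06-09-21d} holds for $(\eta,g')$), and the placement of the first four summands of $g'$ are all correct, including your observation about the minus case via $(1-e^{-\pi})^{-1} = -e^{\pi}(1-e^{\pi})^{-1}$.

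The trouble is your last paragraph. First, a small error: the support claim for the correction term is false as stated. On $\pi^{-1}([-1/(2k),1/(2k)])$ one has $f_k - \hat{L}(\xi) = -h_0$, and $h_0 \in G_{00}$ vanishes only \emph{near} $\pi^{-1}(0)$, not on that whole band; so the fifth summand lies in $\mathcal A_{00}(S,\pi)$ but not necessarily in the annulus $\pi^{-1}([-1/k,-1/(2k)]\cup[1/(2k),1/k])$. Second, and more seriously, your resolution is circular as phrased: you enlarge $G_{00}$ by correction terms ``arising as $(\xi,g)\in\Gamma$ and $k$ vary,'' but $\Gamma$ is defined from $G$, hence from $G_{00}$, so the family you must add changes each time you add it. (This is salvageable, because the terms have the form $(1-e^{-\pi})^{-1}\hat{L}(\mu)\psi^0_k\circ\pi$ with $\mu \in \oplus_{\mathbb Z}H$ and $k \in \mathbb N$ — a countable family parameterized independently of $G_{00}$ — but you do not say this, and it is the whole point.) In fact no refinement of the construction is needed, which is why the paper does not make one: write $f_k = \hat{L}(\zeta)$ and $\mu = \zeta - \xi$. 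Since $f_k - \hat{L}(\xi) = -h_0$ on the inner band, $\hat{L}(\mu)$ vanishes on $\pi^{-1}((-\delta,\delta))$ for some $\delta > 0$, so choosing $k' > 1/\delta$ gives $\hat{L}(\mu)\psi^0_{k'}\circ\pi = 0$ and hence $\hat{L}(\mu)\psi^0_k\circ\pi = \hat{L}(\mu)\left(\psi^0_k - \psi^0_{k'}\right)\circ\pi$. The right-hand side lies in $G_{00}$ already: it telescopes into the differences that were added to $G_{00}$ to ensure $G_j + G_{00} \subseteq G_{j+1}+G_{00}$ (take $f_2^\pm = 0$ in that enlargement). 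Then \eqref{07-09-21e} puts $(1-e^{-\pi})^{-1}\hat{L}(\mu)\psi^0_k\circ\pi$ in $G_{00}$ as well, so your fifth summand belongs to $G_{00}$ for the paper's own $G_{00}$, and your proof closes without touching the construction. This absorption is precisely what is implicit in the paper's step replacing the middle coefficient $\hat{L}(\zeta)$ by $\hat{L}(\xi)$.
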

\begin{proof} Since $ \id - \rho = (\id -\sigma) \oplus (\id - \rho_0)$ and $\Sigma \circ (\id - \sigma) = 0$, we find that $ (\id - \rho)(\Gamma) \subseteq \ker S_0$. Let $(\xi,g) \in \Gamma$, and assume that $S_0(\xi,g) =\Sigma(\xi) = 0$. By Lemma 4.6 of \cite{Th3} there is an element $\xi' \in \bigoplus_\mathbb Z H$ such that $(\id - \sigma)(\xi') = \xi$. By the definition of $\Gamma$ there is $\epsilon > 0$ such that $\hat{L}(\xi)$ and $g$ agree on $\pi^{-1}(]-\epsilon,\epsilon[)$, and we can therefore find a $k \geq \epsilon^{-1}$ such that 
$$
g = \hat{L}(\xi)\psi^0_k\circ \pi + g^-\psi^-_k\circ \pi + g^+\psi^+_k\circ \pi + g_0
$$ 
for some $g^\pm \in \mathcal G(\pm)$ and some $g_0 \in G_{00}$. By the definition of $\mathcal G(\pm)$ there are elements $f^\pm \in \mathcal G(\pm)$ such that $g^\pm = (\id -\rho_0)(f^\pm)$ and by \eqref{07-09-21e} there is an element $g' \in G_{00}$ such that $g_0 = (\id -\rho_0)(g')$. Define
$$
g'' :=  \hat{L}(\xi')\psi^0_k\circ \pi + f^-\psi^-_k\circ \pi + f^+\psi^+_k\circ \pi + g' \ \in \ G .
$$
Since 
$$
 (\id - \rho_0)(\hat{L}(\xi')\psi^0_k\circ \pi) = \hat{L}((\id - \sigma)(\xi'))\psi^0_k\circ \pi = \hat{L}(\xi)\psi^0_k\circ \pi , 
 $$
 it follows that $g = (\id - \rho_0)(g'')$ and hence that $(\xi,g) = (\id - \rho)( \xi',g'')$.
 \end{proof}

It follows from Lemma \ref{01-09-21k} that $S_0$ induces an isomorphism
$$
S : K_0(C) = \Gamma/(\id - \rho)(\Gamma)  \to H \ 
$$
such that $S \circ q = S_0$.

\begin{lemma}\label{02-09-21} $S(K_0(C)^+) = H^+$.
\end{lemma}
\begin{proof} Let $h \in H^+ \backslash \{0\}$. Since $L(\theta(h))(x) = \theta(h)(x)$ for all $x \in \pi^{-1}(0)$ there is a $k \geq 2$ so big that $L(\theta(h))(s) > 0$ for all $s \in \pi^{-1}([-\frac{1}{k},\frac{1}{k}])$. Define
$$
g :=  L(\theta(h))\psi^0_k\circ \pi + \psi^-_k \circ \pi + \psi^+_k \circ \pi  \ .
$$
Note that $g \in G^+$.
Then $(h^{(0)},g) \in \Gamma^+, \ q((h^{(0)},g)) \in K_0(C)^+$, and $S( q((h^{(0)},g))) = h$. Hence,
$H^+ \subseteq S(K_0(C)^+)$. Consider an element $\eta \in K_0(C)^+ \backslash \{0\}$ and write $\eta = q(\xi,g)$ for some $(\xi,g) \in \Gamma$. Let $x\in \pi^{-1}(0)$. Since $\ev_x \circ \rho = \ev_x$, there is an $\alpha$-invariant trace $\tau_x$ on $A$ such that ${\tau_x}_* = \ev_s$; see Lemma 3.5 in \cite{Th3}. Denote by $P : C \to A$ the canonical conditional expectation and note that $\tau_x\circ P$ is a trace on $C$. Since $\eta \in K_0(C)^+ \backslash \{0\}$ and $C$ is simple it follows that
$$
{(\tau_x \circ P)}_*(\eta) > 0 .
$$
Since 
\begin{align*}
&{(\tau_x \circ P)}_*(\eta) = g(x) = \hat{L}(\xi)(x) = \sum_{n \in \mathbb Z} L(\theta(\xi_n))(x) \\
&=  \sum_{n \in \mathbb Z}\theta(\xi_n)(x) = \theta( \Sigma(\xi))(x),
\end{align*} 
and $x \in \pi^{-1}(0)$ was arbitrary, it follows that $S(\eta) = \Sigma(\xi) \in H^+ \backslash \{0\}$. Hence, $S(K_0(C)^+) \subseteq H^+$.
\end{proof}

\begin{lemma}\label{02-09-21c} $K_1(C) = 0$.
\end{lemma}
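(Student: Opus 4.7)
The strategy is standard: apply the Pimsner–Voiculescu six-term exact sequence to the crossed product $C = A \rtimes_\alpha \mathbb Z$ and exploit the fact that $K_1(A) = 0$ because $A$ is an AF-algebra. Under this hypothesis the PV sequence collapses to the short exact sequence
\begin{equation*}
0 \to \Gamma/(\id - \rho)\Gamma \to K_0(C) \to \ker\bigl(\id - \rho : \Gamma \to \Gamma\bigr) \to 0,
\end{equation*}
which is already implicit in the identification $K_0(C) = \Gamma/(\id - \rho)\Gamma$ used in the excerpt, and it yields the identification $K_1(C) = \ker(\id - \rho)$. Thus the task reduces to showing that $\id - \rho$ is injective on $\Gamma$.

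Suppose then that $(\xi, g) \in \Gamma$ satisfies $\rho(\xi,g) = (\xi,g)$, i.e.\ $\sigma(\xi) = \xi$ in $\oplus_\mathbb Z H$ and $\rho_0(g) = g$ in $G$. The first condition forces $\xi_n = \xi_{n+1}$ for every $n$; since $\xi$ has finite support this means $\xi = 0$. For the second condition, $\rho_0(g)(s) = e^{-\pi(s)} g(s)$, so $(1 - e^{-\pi(s)})g(s) = 0$ for all $s \in S$, which forces $g(s) = 0$ whenever $\pi(s) \neq 0$. It remains to handle $\pi^{-1}(0)$.

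Here the compatibility condition defining $\Gamma$ (equation \eqref{06-09-21d}) takes care of things: because $\xi = 0$, we have $\hat L(\xi) = 0$, and hence $g$ must vanish on some neighbourhood $\pi^{-1}(]-\epsilon,\epsilon[)$ of $\pi^{-1}(0)$. In particular $g$ vanishes on $\pi^{-1}(0)$, so combined with the previous paragraph $g \equiv 0$. Hence $(\xi,g) = 0$ and $\id - \rho$ is injective on $\Gamma$, giving $K_1(C) = 0$.

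There is no real obstacle; the only thing to keep in mind is that one should not confuse ``$g$ vanishes on the open dense set $S \setminus \pi^{-1}(0)$'' with ``$g \equiv 0$'', since elements of $G$ need not be continuous in any way that would make such a conclusion automatic—the vanishing on $\pi^{-1}(0)$ itself has to be extracted from membership in $\Gamma$, not from $G$ alone.
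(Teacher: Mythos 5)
Your proof is correct and follows the paper's own argument exactly: both reduce, via the Pimsner--Voiculescu sequence with $K_1(A)=0$ (as $A$ is AF), to the injectivity of $\id-\rho$ on $\Gamma$, deduce $\xi=0$ from shift-invariance plus finite support, get $g(s)=0$ for $\pi(s)\neq 0$ from $(1-e^{-\pi(s)})g(s)=0$, and obtain $g|_{\pi^{-1}(0)}=0$ from the compatibility condition \eqref{06-09-21d} defining $\Gamma$ --- a step the paper compresses into a single ``hence'' and which you rightly flag as not following from $G$ alone, since $\pi^{-1}(0)$ may have nonempty interior (e.g.\ when $\pi(S)$ is discrete) so vanishing off the fibre gives nothing on it. One cosmetic correction: your displayed short exact sequence is misstated, as the kernel term in the $K_0(C)$-sequence is $\ker(\id-\rho)$ acting on $K_1(A)=0$, whereas the identification you actually use, $K_1(C)\cong\ker\bigl(\id-\rho:\Gamma\to\Gamma\bigr)$, comes from the companion sequence for $K_1(C)$ (equivalently, directly from the six-term sequence); since the conclusion you draw is the correct one, this does not affect the proof.
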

\begin{proof} To establish this from the Pimsner-Voiculescu exact sequence, \cite{PV}, we must show that $\id -\rho$ is injective. Let $(\xi,g) \in \Gamma$ and assume that $\rho(\xi,g) = (\xi,g)$. Then $\sigma(\xi) = \xi$, implying that $\xi = 0$ and hence that $g|_{\pi^{-1}(0)} =0$. Since $(1-e^{-\pi(s)})g(s) = 0$ for all $s\in S$, it follows that $g =0$.
\end{proof}

Let $(S,\pi)$ and $B$ be as in Theorem \ref{main}. With $H = K_0(B)$ and the assumed identification of the tracial state space $T(B)$ of $B$ with $\pi^{-1}(0)$ we get the homomorphism $\theta : H \to \Aff \pi^{-1}(0)$ from the canonical map $K_0(B) \to \Aff T(B)$. It follows from Theorem 4.11 in \cite{GH} that $\theta(K_0(B))$ is dense in $\Aff \pi^{-1}(0)$ and that
$$
K_0(B)^+ = \left\{ h \in K_0(B): \ \theta(h)(x) > 0 \ \forall x \in \pi^{-1}(0)\right\} \cup \{0\} .
$$
We can therefore apply the preceding with $H = K_0(B), \ H^+ = K_0(B)^+$ and $u = [1]$.

Let $\tau$ be a trace state on $eCe$. Then $\tau_* \circ S^{-1} : H \to \mathbb R$ is a positive homomorphism such that 
$\tau_* \circ S^{-1}(u) = \tau_*(q(v)) = \tau(e) = 1$, and there is therefore a unique trace state 
$\tau'$ on $B$ such that 
\begin{equation}\label{17-07-23}
{\tau'}_* =  \tau_* \circ S^{-1}
\end{equation}
on $K_0(B) = H$.

\begin{lemma}\label{02-09-21b} The map $\tau \mapsto \tau'$ is an affine homeomorphism from $T(eCe)$ onto $T(B)$.
\end{lemma}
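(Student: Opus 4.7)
The plan is to produce $\tau'$ from $\tau_*$, viewed on $K_0(eCe) = K_0(C)$ via the fullness of $e$ in $C$, by post-composing with $S^{-1}$, and then to use property (A) of \ref{07-09-21c} together with the AF structure of $A$ and $B$ to obtain a bijection in both directions. That $\Theta(\tau) := \tau'$ is a well-defined trace state on $B$ is straightforward: $\tau_*$ is a positive homomorphism $K_0(C) \to \mathbb{R}$ with $\tau_*([e])=1$, and since $S_0(v) = \Sigma(u^{(0)}) = u$ we have $S([e]) = S(q(v)) = u$, so $\tau_* \circ S^{-1}: H \to \mathbb{R}$ is positive and sends $u = [1_B]$ to $1$. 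Because $B$ is a simple unital AF-algebra, such maps are precisely the $K_0$-maps of tracial states on $B$, uniquely defining $\tau'$; affinity of $\Theta$ is immediate from linearity.

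For injectivity I would lift each $\tau_i$ to a trace $\tilde{\tau}_i$ on $C$ with $\tilde{\tau}_i(e)=1$ using Theorem 4.1 of \cite{Th1}, and then invoke property (A) to obtain $\alpha$-invariant lower semi-continuous traces $\sigma_i := \tilde{\tau}_i|_A$ on the AF-algebra $A$. If $\tau_1' = \tau_2'$ then $(\tau_1)_* = (\tau_2)_*$ on $K_0(C)$, so $(\tilde{\tau}_1)_* = (\tilde{\tau}_2)_*$ there; the identification of $\iota_* : K_0(A) \to K_0(C)$ with the quotient $q$ coming from Pimsner-Voiculescu (valid because $K_1(C)=0$ by \ref{02-09-21c}) then gives $(\sigma_1)_* = (\tilde{\tau}_1)_* \circ q = (\tilde{\tau}_2)_* \circ q = (\sigma_2)_*$ on $\Gamma$. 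Since $A$ is AF this forces $\sigma_1 = \sigma_2$, and by property (A) we then get $\tilde{\tau}_1 = \tilde{\tau}_2$, hence $\tau_1 = \tau_2$.

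For surjectivity I would run this chain in reverse: given $\tau' \in T(B)$, set $\phi := \tau'_* \circ S : K_0(C) \to \mathbb{R}$, which is positive with $\phi([e]) = \tau'_*(u) = 1$; then $\phi \circ q : \Gamma \to \mathbb{R}$ is positive and $\rho$-invariant (since $q \circ \rho = q$), so by the AF structure of $A$ it equals $\sigma_*$ for a unique lower semi-continuous trace $\sigma$ on $A$, automatically $\alpha$-invariant by uniqueness. Property (A) extends $\sigma$ to a trace $\tilde{\tau}$ on $C$ with $\tilde{\tau}(e) = \sigma_*(v) = (\phi \circ q)(v) = \phi([e]) = 1$, and Theorem 4.1 of \cite{Th1} gives the desired $\tau \in T(eCe)$ whose extension to $C$ is $\tilde{\tau}$. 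A direct computation using surjectivity of $\iota_* = q$ shows $\tau_* = \phi$ on $K_0(C)$, so $\Theta(\tau) = \tau'$.

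For continuity, weak* convergence $\tau_n \to \tau$ in $T(eCe)$ yields pointwise convergence $(\tau_n)_*[p] = \tau_n(p) \to \tau(p) = \tau_*[p]$ on classes of projections, which generate $K_0(eCe)$; composing with $S^{-1}$ and using that the span of projections is dense in the AF-algebra $B$ then gives $\Theta(\tau_n) \to \Theta(\tau)$ weak* in $T(B)$. Since both $T(eCe)$ and $T(B)$ are compact Hausdorff, a continuous affine bijection is automatically a homeomorphism. The key obstacle is injectivity: as $eCe$ need not be AF (nor even of real rank zero), its trace states are not a priori determined by their $K_0$-values, and property (A) is exactly the input that circumvents this, by forcing every trace on $C$ to be $\hat{\alpha}$-invariant and so recoverable from its restriction to the AF-algebra $A$.
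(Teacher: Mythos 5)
Your overall architecture is sound and largely matches the paper's: well-definedness and continuity are handled exactly as in the paper (via $S$, the identification $K_0(eCe)\cong K_0(C)$ coming from fullness of $e$, and the AF structure of $B$, with the compactness remark making the bijection a homeomorphism), and your surjectivity argument, which lifts $\tau'_*\circ S$ through $q=\iota_*$ to a $\rho$-invariant positive homomorphism on $\Gamma$, realizes it as an $\alpha$-invariant lower semicontinuous trace on the AF-algebra $A$, and then extends by property (A) of \ref{07-09-21c}, is a legitimate purely K-theoretic alternative to the paper's route, which instead invokes Corollary \ref{06-08-21a} with $\beta=0$ to produce a trace state $\omega$ with $(\omega|_{eAe})_*=\ev_x$ for the point $x\in\pi^{-1}(0)$ corresponding to the given element of $T(B)$. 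Your route has the merit of not passing through the simplex-bundle identification at all.

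However, there is a genuine flaw in your injectivity step: you cannot use Theorem 4.1 of \cite{Th1} to extend an \emph{arbitrary} trace state $\tau_i$ on $eCe$ to a trace on $C$. That theorem is a bijection between the $\beta$-KMS \emph{states} of $\hat{\alpha}^e$ and the normalized $\beta$-KMS weights of $\hat{\alpha}$; at $\beta=0$ a KMS state is, by the definition used in the paper, a $\hat{\alpha}^e$-\emph{invariant} trace state, and at this point of the argument you do not know that every element of $T(eCe)$ is invariant under $\hat{\alpha}^e$. Indeed, that invariance is essentially what property (A) delivers, but only \emph{after} the trace has been extended to $C$ --- so the lift as you cite it is circular. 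The paper avoids this by invoking Lemma 4.6 of \cite{CP} (Cuntz--Pedersen), which extends any trace on the corner $eCe$ --- full by simplicity of $C$, Lemma \ref{21-06-23x} --- to a densely defined lower semicontinuous trace on $C$ with no invariance hypothesis; with that one substitution your injectivity argument goes through verbatim, since the rest (passing through $\iota_*=q$, the AF structure of $A$, and injectivity of restriction in property (A)) is exactly the paper's reasoning. Two smaller points: in the surjectivity direction the appeal to Theorem 4.1 of \cite{Th1} is unnecessary, as restricting $\tilde{\tau}$ to $eCe$ already yields the desired trace state (and there the theorem would in any case apply, $\tilde{\tau}$ being $\hat{\alpha}$-invariant by (A)); and the identification of $\iota_*$ with the quotient map $q$ from the Pimsner--Voiculescu sequence rests on $K_1(A)=0$ ($A$ being AF), not on $K_1(C)=0$ from Lemma \ref{02-09-21c}.
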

\begin{proof} The map is clearly affine. To show that it is continuous, assume that $\{\tau_n\}$ is a convergent sequence in $T(eCe)$ and let $\tau = \lim_{n \to \infty} \tau_n$. 
It follows from Lemma \ref{21-06-23x} that $eCe$ is a full corner in $C$, implying that the embedding $eCe \subseteq C$ induces an isomorphism on $K_0$. It follows therefore that
$$
\lim_{n \to \infty} {\tau_n}_* \circ S^{-1}(h) = \tau_*\circ S^{-1}(h)
$$ 
for all $h \in H$, and hence $\lim_{n \to \infty} \tau'_n = \tau'$ in $T(B)$ because $B$ is an AF-algebra. To see that the map is surjective, let $\tau \in T(B)$. There is then an $x \in \pi^{-1}(0)$ such that $\tau_*(h) = \theta(h)(x)$ for all $h$. By Corollary \ref{06-08-21a} applied with $\beta = 0$, there is an trace state $\omega \in T(eCe)$ such that $(\omega|_{eAe})_* = \ev_x$ on $K_0(eAe) \subseteq \Gamma$. Let $h \in H$. Then $(h^{(0)}, L(\theta(h))\psi_1^0\circ \pi )\in K_0(eAe)$ and $S_0((h^{(0)}, L(\theta(h))\psi_1^0\circ \pi )) = h$. Hence 
\begin{align*}
&\omega_* \circ S^{-1}(h) = (\omega|_{eAe})_*( q(h^{(0)}, L(\theta(h))\psi_1^0\circ \pi ))\\
& = L(\theta(h))(x) = \theta(h)(x) = \tau_*(h)
\end{align*}
for all $h\in H$, showing that $\omega' = \tau$. To see that the map is also injective, consider $\tau_1,\tau_2 \in T(eCe)$. If $\tau_1' = \tau_2'$, it follows that ${\tau_1}_* = {\tau_2}_*$ on $K_0(eCe)$. It follows from Lemma 4.6 in \cite{CP} that there are densely defined lower semi-continuous traces $\tilde{\tau}_i$ on $C$ such that $\tilde{\tau}_i|_{eCe} = \tau_i, \ i = 1,2$, and then $\tilde{\tau_1}_* = \tilde{\tau_2}_*$ on $K_0(C) = K_0(eCe)$. In particular, ${\tilde{\tau_1}}_*\circ \iota_* = {\tilde{\tau_2}}_*\circ \iota_*$ where $\iota : A \to C$ is the canonical embedding. Since $A$ is AF this implies that $\tilde{\tau_1}|_A = \tilde{\tau_2}|_A$. Thanks to (A) from Additional properties \ref{07-09-21c} it follows first that $\tilde{\tau_1} = \tilde{\tau_2}$ and then that $\tau_1 = \tau_2$.
\end{proof}

\begin{lemma}\label{02-09-21k} $eCe$ is $*$-isomorphic to $B$.
\end{lemma}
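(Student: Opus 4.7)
The plan is to verify that $eCe$ and $B$ have isomorphic Elliott invariants and then invoke a classification theorem to conclude they are $*$-isomorphic. First I would collect the regularity properties of $eCe$. Since $C$ is simple by Lemma \ref{21-06-23x} and $e \in C$ is a non-zero projection, $eCe$ is a simple unital separable $C^*$-algebra. Nuclearity of $eCe$ follows from the fact that $A$ is AF (hence nuclear) and nuclearity is preserved under crossed products by $\mathbb Z$ and under passage to corners. The UCT holds for $A$ (any AF-algebra satisfies the UCT), is preserved under $\mathbb Z$-crossed products, and passes to $eCe$ as a full corner. By (B) of Additional properties \ref{07-09-21c}, $C$ is $\mathcal Z$-stable, and $\mathcal Z$-stability is inherited by full corners, so $eCe$ is $\mathcal Z$-stable.

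Next I would assemble the Elliott invariant of $eCe$ and match it with that of $B$. By Lemma \ref{21-06-23x} and fullness of $e$, the inclusion $eCe \hookrightarrow C$ induces isomorphisms on $K$-theory. Combining this with Lemma \ref{02-09-21} gives $(K_0(eCe), K_0(eCe)^+, [e]) \cong (H,H^+,u) = (K_0(B),K_0(B)^+,[1_B])$ via $S$, and Lemma \ref{02-09-21c} gives $K_1(eCe) = 0 = K_1(B)$. Lemma \ref{02-09-21b} provides an affine homeomorphism $T(eCe) \to T(B)$, $\tau \mapsto \tau'$. The key compatibility is that this affine homeomorphism intertwines the pairings with $K_0$: by construction \eqref{17-07-23}, ${\tau'}_*(h) = \tau_*(S^{-1}(h))$ for all $h \in K_0(B)$, which is exactly the statement that the pairing map $K_0 \times T \to \mathbb R$ is preserved under the identifications. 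Hence all components of the Elliott invariant $\mathrm{Ell}(eCe) = (K_0,K_0^+,[1], K_1, T, \mathrm{pairing})$ match those of $B$.

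Finally I would apply the classification theorem for simple, separable, unital, nuclear, $\mathcal Z$-stable $C^*$-algebras satisfying the UCT (due to Elliott, Gong, Lin, Niu and others; in the present situation the classical Elliott classification of AF-algebras would in principle suffice after showing $eCe$ is AF, but it is technically cleaner to appeal to the general classification since we already know $\mathcal Z$-stability and UCT). The theorem yields a $*$-isomorphism $eCe \to B$ implementing the identification of Elliott invariants established above.

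I expect the main obstacle to be bookkeeping rather than mathematical difficulty: one must be careful that the affine homeomorphism $T(eCe) \to T(B)$ of Lemma \ref{02-09-21b} and the $K_0$-isomorphism $S$ together preserve the pairing, which is immediate from the definition \eqref{17-07-23} but should be stated explicitly before invoking the black-box classification result.
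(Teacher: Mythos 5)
Your proposal is correct and follows essentially the same route as the paper's proof: assemble the regularity properties (simple, separable, unital, nuclear, UCT, with $\mathcal Z$-stability of $eCe$ inherited from $C$ via (B) of Additional properties \ref{07-09-21c}), match the Elliott invariants using Lemma \ref{02-09-21}, Lemma \ref{02-09-21c} and Lemma \ref{02-09-21b} together with the pairing compatibility ${\tau'}_* \circ S = \tau_*$ from \eqref{17-07-23}, and invoke the classification theorem (Theorem A of \cite{CGSTW} in the paper). The only detail the paper includes that you omit is the one-line check that $B$ itself is $\mathcal Z$-stable (infinite-dimensional simple unital AF-algebras are approximately divisible, hence $\mathcal Z$-absorbing by Theorem 2.3 of \cite{TW}), which is needed since the classification theorem requires $\mathcal Z$-stability of both algebras.
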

\begin{proof} Since $B$ is AF the $K_1$ group of $B$ is trivial, and by Lemma \ref{02-09-21c} the same is true for $eCe$ since $eCe$ is stably isomorphic to $C$. The affine homeomorphism $\tau \mapsto \tau'$ of Lemma  \ref{02-09-21b} is compatible with the isomorphism of ordered groups $S : K_0(eCe) \to K_0(B)$ from Lemma \ref{02-09-21} in the sense that ${\tau'}_* \circ S = \tau_*$, resulting in an isomorphism from the Elliott invariant of $eCe$ onto that of $B$. Both algebras, $B$ and $eCe$, are separable, simple, unital, nuclear and in the UCT class. It is well known that all infinite-dimensional unital simple AF-algebras are approximately divisible and hence $\mathcal Z$-absorbing by Theorem 2.3 of \cite{TW}; in particular, $B$ is $\mathcal Z$-absorbing. Since $C$ is $\mathcal Z$-absorbing thanks to (B) in Additional properties \ref{07-09-21c}, it follows from Corollary 3.2 of \cite{TW} that $eCe$ is $\mathcal Z$-absorbing. Therefore $eCe$ is isomorphic to $B$ by Theorem A in \cite{CGSTW}.
\end{proof} 
 
 In previous work, \cite{Th3}, \cite{ET} and \cite{EST}, we used the formulation of the classification theorem from Corollary D of \cite{CETWW}. The proof behind the statement in \cite{CETWW} is forbiddingly long and we have therefore chosen to refer to \cite{CGSTW} which presents a more direct proof. In addition, the formulation in \cite{CGSTW} makes it unnecessary to check that the isomorphism of $K_0$-groups preserves the ordering. As observed in \cite{CGSTW} this is automatic thanks to the $\mathcal Z$-stability of the two algebras and because the pairing maps are preserved under an isomorphism of the invariants. By inspection of the arguments given above this means that the second half of the proof of Lemma \ref{02-09-21} is unnecessary if one employs \cite{CGSTW} rather than \cite{CETWW}.

\emph{Proof of Theorem \ref{main}:} Combine Lemma \ref{02-09-21k}, Lemma \ref{02-09-21d} and Proposition \ref{21-06-23}. 
\qed

\section{Concluding remarks}

\begin{itemize}

\item 
 We have chosen to show how the constructions from \cite{ET} can be changed to control the sets of $\KMS_\infty$ states and $\KMS_{-\infty}$ states when the given proper simplex bundle is unbounded in both directions. A similar modification of the methods from the proof of Theorem 5.1 in \cite{EST} is also possible, leading to a version of Theorem \ref{main} where $B$ is replaced by a Kirchberg algebra in the UCT class and with torsion free $K_1$-group. Similarly, it is also possible to consider the cases where the given proper simplex bundle is only unbounded in one direction. None of these additional cases present any new difficulties and can be handled by adopting the methods we have presented here. 

\item The work behind this paper started from a wish to decide if the set of $\KMS_\infty$ states of a flow has to be a convex set of states. The main result implies that this is not the case; a fact which seems to contradict Proposition 3.8 on page 447 of \cite{CM2}. The discrepancy arises from the different definitions of $\KMS_\infty$ states mentioned in the introduction. With the original definition of Connes and Marcolli the set is indeed a compact convex set of states, but it is always empty when the set of inverse temperatures does not contain an unbounded interval. Even when the set of inverse temperatures does contain an interval of the form $[r,\infty)$, it follows from the result we have obtained that the set of states satisfying Connes and Marcollis condition will generally be a proper subset of the set $\KMS_\infty(\sigma)$ as we have defined it here.

\end{itemize}

\end{document}